\newtheorem{theorem}{Theorem}[section]
\newtheorem{lemma}{Lemma}[section]
\newtheorem{proposition}{Proposition}[section]
\theoremstyle{definition}
\newtheorem{definition}{Definition}[section]
\theoremstyle{remark}
\newtheorem{remark}{Remark}[section]
\numberwithin{equation}{section}
\newcommand{\ov}{\overline}
\newcommand{\e}{\varepsilon}
\newcommand{\G}{\Gamma}
\renewcommand{\O}{\Omega}
\renewcommand{\liminf}{\varliminf}
\renewcommand{\limsup}{\varlimsup}
\renewcommand{\vec}[1]{\mathbf{#1}}
\newcommand{\field}[1]{\mathbb{#1}}
\newcommand{\R}{\field{R}}
\newcommand{\Prod}{\mathop{\prod}\limits}
\newcommand{\er}{\eqref}
\DeclareMathOperator{\Div}{div}
\renewcommand{\O}{\Omega}
\newcommand{\f}{\varphi}
\renewcommand{\vec}[1]{\boldsymbol{#1}}
\date{}
\begin{document}

\title{On the $\Gamma$-limit of singular perturbation problems with optimal profiles which are not one-dimensional. Part II: The lower bound}
\maketitle
\begin{center}
\textsc{Arkady Poliakovsky \footnote{E-mail:
poliakov@math.bgu.ac.il}
}\\[3mm]
Department of Mathematics, Ben Gurion University of the Negev,\\
P.O.B. 653, Be'er Sheva 84105, Israel
\\[2mm]
\end{center}
\begin{abstract}
In part II we construct the lower bound, in the spirit of
$\Gamma$-$\liminf$ for some general classes of singular perturbation
problems, with or without the prescribed differential constraint,
taking the form $$E_\e(v):=\int_\Omega
\frac{1}{\e}F\Big(\e^n\nabla^n v,...,\e\nabla
v,v\Big)dx\quad\text{for}\;\;
v:\Omega\subset\R^N\to\R^k\;\;\text{such that}\;\; A\cdot\nabla
v=0,$$ where the function $F\geq 0$ and $A:\R^{k\times N}\to\R^m$ is
a prescribed linear operator (for example, $A:\equiv 0$,
$A\cdot\nabla v:=\text{curl}\, v$ and $A\cdot\nabla v=\text{div}\,
v$). Furthermore, we studied the cases where we can easy prove the
coinciding of this lower bound and the upper bound obtained in
\cite{PI}. In particular we find the formula for the $\Gamma$-limit
for the general class of anisotropic problems without a differential
constraint (i.e., in the case $A:\equiv 0$).
\end{abstract}

\section{Introduction}
\begin{definition}
Consider a family $\{I_\varepsilon\}_{\varepsilon>0}$ of functionals
$I_\varepsilon(\phi):U\to[0,+\infty]$, where $U$ is a given metric
space. The $\Gamma$-limits of $I_\varepsilon$ are defined by:
\begin{align*}
(\Gamma-\liminf_{\varepsilon\to 0^+} I_\varepsilon)(\phi)
:=\inf\left\{\liminf_{\varepsilon\to
0^+}I_\varepsilon(\phi_\varepsilon):\;\,\{\phi_\varepsilon\}_{\varepsilon>0}\subset
U,\; \phi_\varepsilon\to\phi\text{ in }U\;
\text{as}\;\varepsilon\to 0^+\right\},\\
(\Gamma-\limsup_{\varepsilon\to 0^+} I_\varepsilon)(\phi)
:=\inf\left\{\limsup_{\varepsilon\to
0^+}I_\varepsilon(\phi_\varepsilon):\;\,\{\phi_\varepsilon\}_{\varepsilon>0}\subset
U,\; \phi_\varepsilon\to\phi\text{ in }U\;
\text{as}\;\varepsilon\to 0^+\right\},\\
(\Gamma-\lim_{\varepsilon\to 0^+}
I_\varepsilon)(\phi):=(\Gamma-\liminf_{\varepsilon\to 0^+}
I_\varepsilon\big)(\phi)=(\Gamma-\limsup_{\varepsilon\to 0^+}
I_\varepsilon)(\phi)\;\;\,\text{in the case they are equal}.
\end{align*}
\end{definition}
It is useful to know the $\Gamma$-limit of $I_\varepsilon$, because
it describes the asymptotic behavior as $\varepsilon\downarrow 0$ of
minimizers of $I_\varepsilon$, as it is clear from the following
simple statement:
\begin{proposition}[De-Giorgi]\label{propdj}
Assume that $\phi_\varepsilon$ is a minimizer of $I_\varepsilon$ for
every $\varepsilon>0$. Then:
\begin{itemize}
\item
If $I_0(\phi)=(\Gamma-\liminf_{\varepsilon\to 0^+}
I_\varepsilon)(\phi)$ and $\phi_\varepsilon\to\phi_0$ as
$\varepsilon\to 0^+$ then $\phi_0$ is a minimizer of $I_0$.

\item
If $I_0(\phi)=(\Gamma-\lim_{\varepsilon\to 0^+}
I_\varepsilon)(\phi)$ (i.e. it is a full $\Gamma$-limit of
$I_\varepsilon(\phi)$) and for some subsequence $\varepsilon_n\to
0^+$ as $n\to\infty$, we have $\phi_{\varepsilon_n}\to\phi_0$, then
$\phi_0$ is a minimizer of $I_0$.
\end{itemize}
\end{proposition}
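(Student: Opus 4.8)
The plan is to recognize Proposition~\ref{propdj} as a version of the fundamental theorem of $\Gamma$-convergence and to extract it from two purely formal facts about the variational limits, combined with the minimality of $\phi_\varepsilon$. Both facts are immediate from the definition of a $\Gamma$-limit as an infimum over sequences: (i) the \emph{liminf inequality}, namely every $\psi_\varepsilon\to\psi$ in $U$ satisfies $\liminf_{\varepsilon\to0^+}I_\varepsilon(\psi_\varepsilon)\ge(\Gamma\text{-}\liminf_{\varepsilon\to0^+}I_\varepsilon)(\psi)$ (and the same with $\limsup$ and $\Gamma\text{-}\limsup$); and (ii) the existence of \emph{almost-recovery sequences}, namely for every $\psi$ and every $\eta>0$ there is $\psi_\varepsilon\to\psi$ with $\liminf_{\varepsilon\to0^+}I_\varepsilon(\psi_\varepsilon)\le(\Gamma\text{-}\liminf_{\varepsilon\to0^+}I_\varepsilon)(\psi)+\eta$ (and the same with $\limsup$/$\Gamma\text{-}\limsup$), the case of an infinite $\Gamma$-value being vacuous. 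I will also use monotonicity of $\liminf$, $\limsup$ and the elementary chain $\liminf_{\varepsilon\to0^+}a_\varepsilon\le\liminf_{n\to\infty}a_{\varepsilon_n}\le\limsup_{n\to\infty}a_{\varepsilon_n}\le\limsup_{\varepsilon\to0^+}a_\varepsilon$, valid for any $\varepsilon_n\downarrow0$.

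For the first item I would fix an arbitrary competitor $\psi\in U$, assume without loss of generality $I_0(\psi)<\infty$, and for given $\eta>0$ pick an almost-recovery sequence $\psi_\varepsilon\to\psi$ for $\Gamma\text{-}\liminf I_\varepsilon=I_0$ with $\liminf_{\varepsilon\to0^+}I_\varepsilon(\psi_\varepsilon)\le I_0(\psi)+\eta$. Since $\phi_\varepsilon$ minimizes $I_\varepsilon$ one has $I_\varepsilon(\phi_\varepsilon)\le I_\varepsilon(\psi_\varepsilon)$ for every $\varepsilon$, so, using $\phi_\varepsilon\to\phi_0$ and (i),
$$I_0(\phi_0)=(\Gamma\text{-}\liminf_{\varepsilon\to0^+}I_\varepsilon)(\phi_0)\le\liminf_{\varepsilon\to0^+}I_\varepsilon(\phi_\varepsilon)\le\liminf_{\varepsilon\to0^+}I_\varepsilon(\psi_\varepsilon)\le I_0(\psi)+\eta.$$
Letting $\eta\to0^+$ and then letting $\psi$ vary gives $I_0(\phi_0)\le\inf_U I_0$, i.e. $\phi_0$ minimizes $I_0$; only the identity $I_0=\Gamma\text{-}\liminf I_\varepsilon$ is used here.

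For the second item the one extra difficulty is that the minimizers converge only along the subsequence $\varepsilon_n$, whereas the $\Gamma$-limits are built from the full limit $\varepsilon\to0^+$. I would therefore first establish the subsequential liminf inequality: if $\phi_{\varepsilon_n}\to\phi_0$ then $\liminf_{n\to\infty}I_{\varepsilon_n}(\phi_{\varepsilon_n})\ge(\Gamma\text{-}\liminf_{\varepsilon\to0^+}I_\varepsilon)(\phi_0)$. The cleanest route is to pad $(\phi_{\varepsilon_n})_n$ into a full family $\tilde\phi_\varepsilon$ by setting $\tilde\phi_\varepsilon:=\phi_{\varepsilon_n}$ for $\varepsilon\in(\varepsilon_{n+1},\varepsilon_n]$; then $\tilde\phi_\varepsilon\to\phi_0$ as $\varepsilon\to0^+$, and applying (i) to $\tilde\phi_\varepsilon$ together with $\liminf_{\varepsilon\to0^+}I_\varepsilon(\tilde\phi_\varepsilon)\le\liminf_{n\to\infty}I_{\varepsilon_n}(\tilde\phi_{\varepsilon_n})=\liminf_{n\to\infty}I_{\varepsilon_n}(\phi_{\varepsilon_n})$ yields the claim (alternatively one uses the characterization $(\Gamma\text{-}\liminf_{\varepsilon\to0^+}I_\varepsilon)(\phi_0)=\sup_{\rho>0}\liminf_{\varepsilon\to0^+}\inf_{\psi\in B_\rho(\phi_0)}I_\varepsilon(\psi)$, or notes that this is exactly the stability of $\Gamma$-convergence under subsequences and reruns the first item for $(I_{\varepsilon_n})_n$). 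Granting it, I would fix $\psi\in U$ with $I_0(\psi)<\infty$ and $\eta>0$ and, using now that $I_0$ is the \emph{full} $\Gamma$-limit so that also $I_0=\Gamma\text{-}\limsup I_\varepsilon$, pick an almost-recovery sequence $\psi_\varepsilon\to\psi$ with $\limsup_{\varepsilon\to0^+}I_\varepsilon(\psi_\varepsilon)\le I_0(\psi)+\eta$; then minimality $I_{\varepsilon_n}(\phi_{\varepsilon_n})\le I_{\varepsilon_n}(\psi_{\varepsilon_n})$ gives
$$I_0(\phi_0)\le\liminf_{n\to\infty}I_{\varepsilon_n}(\phi_{\varepsilon_n})\le\limsup_{n\to\infty}I_{\varepsilon_n}(\psi_{\varepsilon_n})\le\limsup_{\varepsilon\to0^+}I_\varepsilon(\psi_\varepsilon)\le I_0(\psi)+\eta,$$
and sending $\eta\to0^+$ and varying $\psi$ concludes.

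The only genuinely substantive step, everything else being routine manipulation of infima and $\liminf$'s, is this passage to subsequences in the second item, i.e. reconciling the subsequential convergence of the minimizers with the full-limit definition of the $\Gamma$-limits; once the subsequential liminf inequality is available the rest is immediate. I would also make explicit two tacit conventions: "$\phi_\varepsilon$ is a minimizer of $I_\varepsilon$" means $I_\varepsilon(\phi_\varepsilon)=\inf_U I_\varepsilon$, which is what legitimizes $I_\varepsilon(\phi_\varepsilon)\le I_\varepsilon(\psi_\varepsilon)$ for the chosen near-optimal $\psi_\varepsilon$, and "$\phi_0$ is a minimizer of $I_0$" means precisely $I_0(\phi_0)\le I_0(\psi)$ for all $\psi\in U$, which is exactly what the displayed chains deliver; note also that only arbitrarily small $\varepsilon$ ever enters, so minimality of $\phi_\varepsilon$ for small $\varepsilon$ would already suffice.
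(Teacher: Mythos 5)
Your proof is correct and complete: both items follow from the liminf inequality, the existence of almost-recovery sequences, minimality of $\phi_\varepsilon$, and (for the second item) the padding of the subsequence into a full family together with the use of the $\Gamma$-$\limsup$ recovery sequence, which is exactly where the hypothesis of a full $\Gamma$-limit is needed. The paper itself states Proposition~\ref{propdj} without proof, as a standard fact of De Giorgi's theory, so there is no argument to compare against; yours is the standard one and is carried out correctly.
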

Usually, for finding the $\Gamma$-limit of $I_\varepsilon(\phi)$, we
need to find two bounds.
\begin{itemize}
\item[{\bf(*)}] Firstly, we find a lower bound, i.e. a functional
$\underline{I}(\phi)$ such that for every family
$\{\phi_\varepsilon\}_{\varepsilon>0}$, satisfying
$\phi_\varepsilon\to \phi$ as $\varepsilon\to 0^+$, we have
$\liminf_{\varepsilon\to 0^+}I_\varepsilon(\phi_\varepsilon)\geq
\underline{I}(\phi)$.
\item[{\bf(**)}] Secondly, we find an upper
bound, i.e. a functional $\overline{I}(\phi)$, such that for every
$\phi\in U$ there exists a family
$\{\psi_\varepsilon\}_{\varepsilon>0}$, satisfying
$\psi_\varepsilon\to \phi$ as $\varepsilon\to 0^+$ and
$\limsup_{\varepsilon\to 0^+}I_\varepsilon(\psi_\varepsilon)\leq
\overline{I}(\phi)$.
\item[{\bf(***)}] If we find that
$\underline{I}(\phi)=\overline{I}(\phi):=I(\phi)$, then $I(\phi)$ is
the $\Gamma$-limit of $I_\varepsilon(\phi)$.
\end{itemize}

 In various applications we deal with the asymptotic behavior as $\e\to 0^+$ of a family of
 functionals $\{I_\e\}_{\e>0}$
of the following forms.
\begin{itemize}
\item
In the case of the first order problem the functional $I_\e$, which
acts on functions $\psi:\O\to\R^m$, has the form
\begin{equation}\label{b1..}
I_\e(\psi)=\int_\O
\e\big|\nabla\psi(x)\big|^2+\frac{1}{\e}W\Big(\psi(x),x\Big)dx\,,
\end{equation}
or more generally
\begin{equation}\label{b2..}
I_\e(\psi)=\int_\O\frac{1}{\e}G\Big(\e^n\nabla\psi^n,\ldots,\e\nabla\psi,\psi,x\Big)dx
+\int_\O\frac{1}{\e}W\big(\psi,x\big)dx\,,
\end{equation}
where $G(0,\ldots,0,\psi,x)\equiv 0$.
\item In the case of the second order problem the functional $I_\e$,
which acts on functions $v:\O\to\R^k$, has the form
\begin{equation}\label{b3..}
I_\e(v)=\int_\O \e\big|\nabla^2 v(x)\big|^2+\frac{1}{\e}W\Big(\nabla
v(x),v(x),x\Big)dx\,,
\end{equation}
or more generally
\begin{equation}\label{b4..}
I_\e(v)=\int_\O\frac{1}{\e}G\Big(\e^n\nabla^{n+1}
v,\ldots,\e\nabla^2 v,\nabla
v,v,x\Big)dx+\int_\O\frac{1}{\e}W\big(\nabla v,v,x\big)dx\,,
\end{equation}
where $G(0,\ldots,0,\nabla v,v,x)\equiv 0$.
\end{itemize}

 The functionals of the form \er{b1..} arise in the theories of phase
transitions and minimal surfaces. They were first studied by Modica
and Mortola \cite{mm1}, Modica \cite{modica}, Sterenberg
\cite{sternberg} and others. The $\Gamma$-limit of the functional in
\er{b1..}, where $W$ don't depend on $x$ explicitly, was obtained in
the general vectorial case by Ambrosio in \cite{ambrosio}. The
$\Gamma$-limit of the functional of the form \er{b2..}, where $n=1$
and there exist $\alpha,\beta\in\R^m$ such that $W(h,x)=0$ if and
only if $h\in\{\alpha,\beta\}$, under some restriction on the
explicit dependence on $x$ of $G$ and $W$, was obtained by Fonseca
and Popovici in \cite{FonP}. The $\Gamma$-limit of the functional of
the form \er{b2..}, with $n=2$,
$G(\cdot)/\e\equiv\e^3|\nabla^2\psi|^2$ and $W$ which doesn't depend
on $x$ explicitly, was found by I.~Fonseca and C.~Mantegazza in
\cite{FM}.

%
%
%
%
%
%

 The functionals of second order of the form \er{b3..} arise, for
example, in the gradient theory of solid-solid phase transitions,
where one considers energies of the form
\begin{equation}\label{b3..part}
I_\e(v)=\int_\O \e|\nabla^2 v(x)|^2+\frac{1}{\e}W\Big(\nabla
v(x)\Big)dx\,,
\end{equation}
where $v:\O\subset\R^N\to\R^N$ stands for the deformation, and the
free energy density $W(F)$ is nonnegative and satisfies
$$W(F)=0\quad\text{if and only if}\quad F\in K:=SO(N)A\cup SO(N)B\,.$$
Here $A$ and $B$ are two fixed, invertible matrices, such that
$rank(A-B)=1$ and $SO(N)$ is the set of rotations in $\R^N$. The
simpler case where $W(F)=0$ if and only if $F\in\{A,B\}$ was studied
by Conti, Fonseca and Leoni in \cite{contiFL}. The case of problem
\er{b3..part}, where $N=2$ and $W(QF)=W(F)$ for all $Q\in SO(2)$ was
investigated by Conti and Schweizer in \cite{contiS1} (see also
\cite{contiS} for a related problem). Another important example of
the second order energy is the so called Aviles-Giga functional,
defined on scalar valued functions $v$ by
\begin{equation}\label{b5..}
\int_\O\e|\nabla^2 v|^2+\frac{1}{\e}\big(1-|\nabla
v|^2\big)^2\quad\quad\text{(see \cite{adm},\cite{ag1},\cite{ag2})}.
\end{equation}

In this paper we deal with the asymptotic behavior as $\e\to 0^+$ of
a family of functionals of the following general form: Let
$\Omega\subset{\mathbb{R}}^N$ be an open set.
For every $\varepsilon>0$ consider the general functional
\begin{multline}\label{fhjvjhvjhv}
I_{\varepsilon}(v)=\big\{I_{\varepsilon}(\Omega)\big\}(v):=
\int_{\Omega}\frac{1}{\varepsilon}G\Big(\varepsilon^n\nabla^n
v,\ldots,\varepsilon\nabla
v,v,x\Big)+\frac{1}{\varepsilon}W\big(v,x\big)dx\quad \text{with }
v:=(\nabla u,h,\psi),\\
\text{ where } u\in W^{(n+1),1}_{loc}(\Omega,{\mathbb{R}}^k),\;\,
h\in W^{n,1}_{loc}(\Omega,{\mathbb{R}}^{d\times N})\text{ s.t.
}\text{div } h\equiv 0,\;\,\psi\in
W^{n,1}_{loc}(\Omega,{\mathbb{R}}^m).
\end{multline}
Here
$$G:\R^{\big(\{k\times N\}+\{d\times N\}+m\big)\times N^n}\times\ldots\times\R^{\big(\{k\times N\}+\{d\times N\}+m\big)\times N}
\times\R^{\{k\times N\}+\{d\times N\}+m}\times\R^N\,\to\,\R$$ and
$W:\R^{\{k\times N\}+\{d\times N\}+m}\times\R^N\,\to\,\R$ are
nonnegative continuous functions and $G$ satisfies
$G(0,\ldots,0,v,x)\equiv 0$. The functionals in \er{b1..},\er{b2..}
and \er{b3..},\er{b4..} are important particular cases of the
general energy $I_\e$ in \er{fhjvjhvjhv}. In the general form
\er{fhjvjhvjhv} we also include the dependence on $\Div$-free
function $h$, which can be useful in the study of problems with
non-local terms as the Rivi\`ere-Serfaty functional and other
functionals in Micromagnetics.

 In order to simplify the notations for every open
$\mathcal{U}\subset\R^N$ consider
\begin{multline}\label{fhjvjhvjhvholhiohiovhhjhvvjvf}
\mathcal{B}(\mathcal{U}):=\bigg\{v\in
L^1_{loc}\big(\mathcal{U},{\mathbb{R}}^{k\times
N}\times{\mathbb{R}}^{d\times
N}\times{\mathbb{R}}^m\big):\;\;v=(\nabla u,h,\psi),\\u\in
W^{1,1}_{loc}(\mathcal{U},{\mathbb{R}}^k),\;\, h\in
L^{1}_{loc}(\mathcal{U},{\mathbb{R}}^{d\times N})\text{ s.t.
}\text{div } h\equiv 0,\;\,\psi\in
L^{1}_{loc}(\mathcal{U},{\mathbb{R}}^m)\bigg\},
\end{multline}
and
\begin{equation}\label{huighuihuiohhhiuohoh}
F\Big(\nabla^n v,\ldots,\nabla v,v,x\Big)\,:=\,G\Big(\nabla^n
v,\ldots,\nabla v,v,x\Big)\,+\,W(v,x)
\end{equation}
Then
\begin{equation}\label{fhjvjhvjhvnlhiohoioiiy}
I_{\varepsilon}(v)
=\int_{\Omega}\frac{1}{\varepsilon}F\Big(\varepsilon^n\nabla^n
v,\ldots,\varepsilon\nabla v,v,x\Big)dx\quad \text{with }
v\in \mathcal{B}(\O)\cap W^{n,1}_{loc}\big(\O,\R^{k\times
N}\times{\mathbb{R}}^{d\times N}\times{\mathbb{R}}^m\big).
\end{equation}
%
%
%
%
%
%
%
%
What can we expect as the $\Gamma$-limit or at least as an upper
bound of these general energies in $L^p$-topology for some $p\geq
1\,$? It is clear that if $G$ and $W$ are nonnegative and $W$ is a
continuous on the argument $v$ function, then the upper bound for
$I_\e(\cdot)$ will be finite only if
\begin{equation}\label{hghiohoijojjkhhhjhjjkjgg}
W\big(v(x),x\big)=0\quad\text{for a.e.}\;\;x\in\Omega\,,
\end{equation}
i.e. if we define
\begin{equation}\label{cuyfyugugghvjjhh}
\mathcal{A}_0:=\bigg\{v\in L^p\big(\Omega,{\mathbb{R}}^{k\times
N}\times{\mathbb{R}}^{d\times
N}\times{\mathbb{R}}^m\big)\cap\mathcal{B}(\Omega):\;\,
W\big(v(x),x\big)=0\;\,\text{for a.e.}\,\;x\in\Omega\bigg\}
\end{equation}
and
\begin{equation}\label{cuyfyugugghvjjhhggihug}
\mathcal{A}:=\bigg\{v\in L^p\big(\Omega,{\mathbb{R}}^{k\times
N}\times{\mathbb{R}}^{d\times N}\times{\mathbb{R}}^m\big):\;\,
(\Gamma-\limsup_{\varepsilon\to 0^+}
I_\varepsilon)(v)<+\infty\bigg\},
\end{equation}
then clearly $\mathcal{A}\subset\mathcal{A}_0$. In most interesting
applications the set $\mathcal{A}_0$ consists of discontinuous
functions. The natural space of discontinuous functions is $BV$
space. It turns out that in the general case if $G$ and $W$ are
$C^1$-functions and if we consider
\begin{equation}\label{hkghkgh}
\mathcal{A}_{BV}:=\mathcal{A}_0\cap\mathcal{B}(\mathbb{R}^N)\cap
BV\cap L^\infty,
\end{equation}
then
\begin{equation}\label{nnloilhyoih}
\mathcal{A}_{BV}\subset\mathcal{A}\subset\mathcal{A}_0.
\end{equation}
In many cases we have $\mathcal{A}_{BV}=\mathcal{A}$. For example
this is indeed the case if the energy $I_\varepsilon(v)$ has the
simplest form $I_\varepsilon(v)=\int_\Omega\varepsilon|\nabla
v|^2+\frac{1}{\varepsilon}W(v)\,dx$, and the set of zeros of $W$:
$\{h: W(h)=0\}$ is finite. However, this is in general not the case.
For example, as was shown by Ambrosio, De Lellis and Mantegazza in
\cite{adm}, $\mathcal{A}_{BV}\subsetneq\mathcal{A}$ in the
particular case of the energy defined by \eqref{b5..} with $N=2$. On
the other hand, there are many applications where the set
$\mathcal{A}$ still inherits some good properties of $BV$ space. For
example, it is indeed the case for the energy \eqref{b5..} with
$N=2$, as was shown by Camillo de Lellis and Felix Otto in
\cite{CDFO}.

 The main contribution of \cite{PI} was to improve our method (see
\cite{pol},\cite{polgen}) for finding upper bounds in the sense of
({\bf**}) for the general functional \er{fhjvjhvjhv} in the case
where the limiting function belongs to $BV$-space, i.e. for
$v=(\nabla u,h,\psi)\in\mathcal{A}_{BV}$. In order to formulate the
main results of \cite{PI} and of this paper we present the following
definitions.
\begin{definition}
For every $\vec\nu\in S^{N-1}$ define
$Q(\vec\nu):=\big\{y\in{\mathbb{R}}^N:\;
-1/2<y\cdot\vec\nu_j<1/2\quad\forall j\big\}$, where
$\{\vec\nu_1,\ldots,\vec\nu_N\}$ is an orthonormal base in
${\mathbb{R}}^N$ such that $\vec\nu_1=\vec\nu$. Then set
\begin{multline*}
\mathcal{D}_1(v^+,v^-,\vec\nu):=\bigg\{v\in
C^n\big(\mathbb{R}^N,{\mathbb{R}}^{k\times
N}\times{\mathbb{R}}^{d\times N}\times{\mathbb{R}}^m\big)\cap
\mathcal{B}(\mathbb{R}^N):\\ v(y)\equiv \theta(\vec\nu\cdot
y)\;\,\text{and}\;\, v(y)=v^-\;\text{ if }\;y\cdot\vec\nu\leq
-1/2,\;\; v(y)=v^+\;\text{ if }\; y\cdot\vec\nu\geq 1/2\bigg\},
\end{multline*}
where $\mathcal{B}(\cdot)$ is defined in
\eqref{fhjvjhvjhvholhiohiovhhjhvvjvf}, and
\begin{multline*}
\mathcal{D}_{per}(v^+,v^-,\vec\nu):=\bigg\{v\in
C^n\big(\mathbb{R}^N,{\mathbb{R}}^{k\times
N}\times{\mathbb{R}}^{d\times N}\times{\mathbb{R}}^m\big)\cap \mathcal{B}(\mathbb{R}^N):\\
v(y)=v^-\;\text{ if }\;y\cdot\vec\nu\leq -1/2,\;\; v(y)=v^+\;\text{
if }\; y\cdot\vec\nu\geq 1/2,\;\, v(y+\vec\nu_j)=v(y)\;\;\forall
j=2,\ldots, N\bigg\}.
\end{multline*}
Next define
\begin{align}
\label{Energia1} E_{1}(v^+,v^-,\vec\nu,x)=\inf\bigg\{
\int\limits_{Q(\vec\nu_v)}\frac{1}{L}F\Big(L^n\nabla^n\zeta,\ldots,L\nabla
\zeta,\zeta,x\Big)dy:\;\, L>0,\, \zeta(y)\in
\mathcal{D}_1(v^+,v^-,\vec\nu)\bigg\}\,,\\
\label{Energia2} E_{per}(v^+,v^-,\vec\nu,x)=\inf\bigg\{
\int\limits_{Q(\vec\nu_v)}\frac{1}{L}F\Big(L^n\nabla^n\zeta,\ldots,L\nabla
\zeta,\zeta,x\Big)dy:\;\, L>0,\, \zeta(y)\in
\mathcal{D}_{per}(v^+,v^-,\vec\nu)\bigg\}\,.\\
\label{Energia3} E_{abst}(v^+,v^-,x)=\Big(\Gamma-\liminf_{
\varepsilon \to 0^+}
I_\varepsilon\big(Q(\vec\nu)\big)\Big)\Big(\eta(v^+,v^-,\vec\nu)\Big),
\end{align}
where
\begin{equation}\eta(v^+,v^-,\vec\nu)(y):=
\begin{cases}
v^-\quad\text{if }\vec\nu\cdot y<0,\\
v^+\quad\text{if }\vec\nu\cdot y>0,
\end{cases}
\end{equation}
and we mean the $\Gamma-\liminf$ in $L^p$ topology for some $p\geq
1$.
\end{definition}
It is not difficult to deduce that
\begin{equation}\label{dghfihtihotj}
E_{abst}(v^+,v^-,\vec\nu,x)\leq E_{per}(v^+,v^-,\vec\nu,x)\leq
E_{1}(v^+,v^-,\vec\nu,x).
\end{equation}
Next define the functionals
$K_1(\cdot),K_{per}(\cdot),K^{*}(\cdot):\mathcal{B}(\O)\cap BV\cap
L^\infty\,\to\,\R$ by
\begin{equation}\label{hfighfighfih}
K_{1}(v):=
\begin{cases}
\int_{\O\cap
J_v}E_{1}\Big(v^+(x),v^-(x),\vec\nu_v(x),x\Big)\,d\mathcal{H}^{N-1}(x)\quad\text{if
}v\in \mathcal{A}_0,\\+\infty\quad\text{otherwise},
\end{cases}
\end{equation}
\begin{equation}\label{hfighfighfihgigiugi}
K_{per}(v):=
\begin{cases}
\int_{\O\cap
J_v}E_{per}\Big(v^+(x),v^-(x),\vec\nu_v(x),x\Big)\,d\mathcal{H}^{N-1}(x)\quad\text{if
}v\in \mathcal{A}_0,\\+\infty\quad\text{otherwise},
\end{cases}
\end{equation}
\begin{equation}\label{hfighfighfihhioh}
K^{*}(v):=
\begin{cases}
\int_{\Omega\cap
J_v}E_{abst}\Big(v^+(x),v^-(x),\vec\nu_v(x),x\Big)\,d\mathcal{H}^{N-1}(x)\quad\text{if
}v\in \mathcal{A}_0,\\+\infty\quad\text{otherwise},
\end{cases}
\end{equation}
where $J_v$ is the jump set of $v$, $\vec\nu_v$ is the jump vector
and $v^-,v^+$ are jumps of $v$.
Then, by \er{dghfihtihotj} trivially follows
\begin{equation}\label{nvhfighfrhyrtehu}
K^*\big(v\big)\leq K_{per}\big(v\big)\leq K_1\big(v\big)\,.
\end{equation}
We call $K_1(\cdot)$ by the bound, achieved by one dimensional
profiles, $K_{per}(\cdot)$ by the bound, achieved by
multidimensional periodic profiles and $K^*(\cdot)$ by the bound,
achieved by abstract profiles.

 Our general conjecture is that $K^*(\cdot)$ coincides with the $\Gamma$-limit for the
functionals $I_\e(\cdot)$ in \er{fhjvjhvjhvnlhiohoioiiy}, under
$L^{p}$ convergence, in the case where the limiting functions $v\in
BV\cap L^\infty$.
It is known that in the case of the problem \er{b1..}, where $W\in
C^1$ don't depend on $x$ explicitly, this is indeed the case and
moreover, in this case we have equalities in \er{nvhfighfrhyrtehu}
(see \cite{ambrosio}). The same result is also known for problem
\er{b5..} when $N=2$ (see \cite{adm} and \cite{CdL},\cite{pol}). It
is also the case for problem \er{b3..part} where $W(F)=0$ if and
only if $F\in\{A,B\}$, studied by Conti, Fonseca and Leoni, if $W$
satisfies the additional hypothesis ($H_3$) in \cite{contiFL}.
However, as was shown there by an example, if we don't assume
($H_3$)-hypothesis, then it is possible that $E_{per}\big(\nabla
v^+,\nabla v^-,\vec\nu\big)$ is strictly smaller than
$E_{1}\big(\nabla v^+,\nabla v^-,\vec\nu\big)$ and thus, in general,
$K_1(\cdot)$ can differ from the $\Gamma$-limit. In the same work it
was shown that if, instead of ($H_3$) we assume hypothesis ($H_5$),
then $K_{per}(\cdot)$ turns to be equal to $K^*(\cdot)$ and the
$\Gamma$-limit of \er{b3..part} equals to $K_{per}(\cdot)\equiv
K^*(\cdot)$. The similar result known also for problem \er{b2..},
where $n=1$ and there exist $\alpha,\beta\in\R^m$ such that
$W(h,x)=0$ if and only if $h\in\{\alpha,\beta\}$, under some
restriction on the explicit dependence on $x$ of $G$ and $W$. As was
obtained by Fonseca and Popovici in \cite{FonP} in this case we also
obtain that $K_{per}(\cdot)\equiv K^*(\cdot)$ is the $\Gamma$-limit
of \er{b2..}. In the case of problem \er{b3..part}, where $N=2$ and
$W(QF)=W(F)$ for all $Q\in SO(2)$, Conti and Schweizer in
\cite{contiS1} found that the $\Gamma$-limit equals to $K^*(\cdot)$
(see also \cite{contiS} for a related problem). However, by our
knowledge, it is not known, weather in general $K^*(\cdot)\equiv
K_{per}(\cdot)$.

 On \cite{polgen} we showed that for the general problems \er{b2..}
and \er{b4..}, $K_1(\cdot)$ is the upper bound in the sense of
{\bf(**)}, if the limiting function belongs to $BV$-class. However,
as we saw, this bound is not sharp in general. In \cite{PI} we
improved our method and obtained that for the general problem
\er{fhjvjhvjhvnlhiohoioiiy}, $K_{per}(\cdot)$ is always an upper
bound in the sense of {\bf(**)} in the case where the limiting
functions $v$ belong to $BV$-space and $G,W\in C^1$. More precisely,
we have the following Theorem:
\begin{theorem}\label{ffgvfgfhthjghgjhg}
Let $\O\subset\R^N$ be an open set and
$$F:\R^{\big(\{k\times N\}+\{d\times N\}+m\big)\times N^n}\times\ldots\times\R^{\big(\{k\times N\}+\{d\times N\}+m\big)\times N}
\times\R^{\{k\times N\}+\{d\times N\}+m}\times\R^N\,\to\,\R$$ be a
nonnegative $C^1$ function. Furthermore assume that $v:=(\nabla
u,h,\psi)\in\mathcal{B}(\R^N)\cap BV\big(\R^N,\R^{k\times
N}\times\R^{d\times N}\times\R^m\big)\cap
L^\infty\big(\R^N,\R^{k\times N}\times\R^{d\times N}\times\R^m\big)$
satisfies $\Div h\equiv 0$, $|Dv|(\partial\Omega)=0$ and
$$F\Big(0,\ldots,0,v(x),x\Big)=0\quad\text{for a.e.}\;\;x\in\O.$$
Then there exists a sequence $v_\e=\big(\nabla
u_\e,h_\e,\psi_\e\big)\in \mathcal{B}(\R^N)\cap
C^\infty\big(\R^N,\R^{k\times N}\times\R^{d\times N}\times\R^m\big)$
such that $\Div h_\e\equiv 0$, for every $p\geq 1$ we have
$v_\varepsilon\to v$ in $L^p$ and
$$\lim_{\varepsilon\to 0^+}\int_{\Omega}\frac{1}{\varepsilon}F\Big(\varepsilon^n\nabla^n
v_\e(x),\ldots,\varepsilon\nabla v_\e(x)\,,\,v(x)\,,\,x\Big)dx=
K_{per}(v).$$ Here $\mathcal{B}(\R^N)$ was defined by
\er{fhjvjhvjhvholhiohiovhhjhvvjvf} and $K_{per}(\cdot)$ was defined
by \er{hfighfighfihgigiugi}.
\end{theorem}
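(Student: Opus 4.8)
The plan is to reduce the global construction to a local one near the jump set $J_v$, exactly as in the classical Modica--Mortola and Fonseca--Tartar approach, but carrying along all three components $(\nabla u_\e, h_\e, \psi_\e)$ simultaneously and respecting the constraint $\Div h_\e\equiv 0$. First I would fix a small parameter and use the structure of $BV$ functions: since $v\in BV\cap L^\infty$ with $|Dv|(\partial\O)=0$, the measure $|Dv|$ decomposes into a jump part concentrated on the rectifiable set $J_v$ and a diffuse part (absolutely continuous plus Cantor). The contribution of the diffuse part to the energy should be made negligible by a separate mollification argument, so the heart of the matter is to build, for $\mathcal H^{N-1}$-a.e.\ $x_0\in J_v$, a recovery sequence in a small cube $Q(\vec\nu_v(x_0))$ of size $\rho$ centered at $x_0$ and oriented along the jump normal, whose rescaled energy approximates $E_{per}(v^+(x_0),v^-(x_0),\vec\nu_v(x_0),x_0)$. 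By definition \er{Energia2} of $E_{per}$, for each such $x_0$ and each $\eta>0$ there is a length scale $L=L(x_0,\eta)$ and a profile $\zeta\in\mathcal D_{per}(v^+,v^-,\vec\nu)$ — periodic in the $N-1$ tangential directions, interpolating between the two constant states across a unit slab — whose averaged energy density is within $\eta$ of $E_{per}$. The task is to glue rescaled copies of these local profiles.

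The key steps, in order, would be: \emph{(i)} Use a Besicovitch/Vitali covering of $J_v$ (up to an $\mathcal H^{N-1}$-null set and a set of arbitrarily small measure) by disjoint cubes $Q_i=Q(x_0^i,\rho_i,\vec\nu_v(x_0^i))$ on which $v$ is, in the blow-up sense, close in $L^1$ to the two-valued function $\eta(v^+(x_0^i),v^-(x_0^i),\vec\nu_v(x_0^i))$, and such that $v^\pm$ and $x$ are nearly constant on $Q_i$ in the relevant topologies (this is where $|Dv|(\partial\O)=0$ is used, to ignore boundary effects). \emph{(ii)} Inside each $Q_i$, define $v_\e$ by inserting, in the layer of width $\e L_i$ around the hyperplane $\{(x-x_0^i)\cdot\vec\nu_v(x_0^i)=0\}$, the rescaled profile $y\mapsto \zeta_i\big((x-x_0^i)/(\e L_i)\big)$, tiled periodically in the tangential directions at period $\e L_i$ to fill the cross-section of $Q_i$, and set $v_\e\equiv v^\pm$ (the local constants) away from that layer. \emph{(iii)} Patch these local definitions together across the cube boundaries and over the leftover ``diffuse'' region; on the leftover region take $v_\e$ to be a standard mollification $v*\varphi_{\delta(\e)}$ of $v$ with $\delta(\e)\to 0$ slowly, so that the energy there is $o(1)$ because $F(0,\dots,0,v,x)=0$ and $F\in C^1$ (Taylor expansion controls $\frac1\e F(\e^n\nabla^n v_\e,\dots)$ by the diffuse part of $|Dv|$, which is non-atomic). \emph{(iv)} Verify that $v_\e=(\nabla u_\e,h_\e,\psi_\e)\in\mathcal B(\R^N)\cap C^\infty$, i.e.\ produce potentials $u_\e$ for the gradient slot and keep $\Div h_\e\equiv 0$ — this is inherited from the fact that each admissible profile $\zeta\in\mathcal D_{per}$ already lies in $\mathcal B(\R^N)$, so the gradient and divergence-free structures survive rescaling, tangential tiling, and the $C^1$ patching (one must be slightly careful to match the curl-free and divergence-free conditions at the interfaces between a local profile block and the surrounding constant, which is automatic since $\zeta_i$ equals the constants $v^\pm$ in a neighborhood of the slab faces $y\cdot\vec\nu=\pm1/2$). \emph{(v)} Add up: the energy concentrates on the $O(\e)$-thick layers, each of $\mathcal H^{N-1}$-cross-section $\approx$ area of $Q_i\cap J_v$, and the $1/\e$ scaling together with the change of variables $x=x_0^i+\e L_i y$ turns $\int_{Q_i}\frac1\e F(\e^n\nabla^n v_\e,\dots)dx$ into $\mathcal H^{N-1}(Q_i\cap J_v)\cdot\big(\int_{Q(\vec\nu)}\frac1{L_i}F(L_i^n\nabla^n\zeta_i,\dots)dy + o(1)\big)$, which sums to $K_{per}(v)+O(\eta)$; then let $\eta\to0$ and extract a diagonal sequence. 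The $L^p$ convergence $v_\e\to v$ is straightforward from the construction since $v_\e$ differs from $v$ only on a set of measure $O(\e)$ and stays bounded in $L^\infty$.

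The main obstacle I expect is step \emph{(iv)}, namely simultaneously preserving the differential structure (gradient for the $\nabla u$ block, $\Div h\equiv 0$ for the $h$ block) while performing the tangential periodic tiling and the interface patching in a way that does not inflate the energy. Tiling a $\nabla$-structured profile periodically in the tangential directions is fine (the tiled object is still a gradient because the profile returns to the \emph{same} constant $v^\pm$ at the tangential period boundaries, so the potentials match up to an additive constant that can be absorbed), and likewise a divergence-free profile that equals the constant $h^\pm$ near the period faces tiles to a divergence-free field; the delicate point is the corners where one local block $Q_i$ meets a neighboring region with a slightly different normal $\vec\nu_v(x_0^j)$ or different constants — there one needs a small transition cube of relative width $\to0$ on which $v_\e$ is interpolated while keeping it in $\mathcal B$, and one must check the energy on these transition regions is $o(1)$ (it is, because their total $\mathcal H^{N-1}$-measure is controlled by the chosen covering accuracy and the integrand is bounded on the compact range of values involved). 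A secondary technical point is ensuring the approximating length scales $L_i$ and the mollification scale $\delta(\e)$ can be chosen consistently so that a single sequence $v_\e$ works; this is handled by the usual diagonalization over a countable dense set of the accuracy parameters, invoking the lower semicontinuity of $\Gamma$-$\liminf$ only implicitly through the explicit upper bound we are constructing. Since Theorem~\ref{ffgvfgfhthjghgjhg} is precisely the main result imported from \cite{PI}, in the present paper one may simply cite it; the sketch above is the proof that lives in \cite{PI}.
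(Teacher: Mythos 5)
The present paper does not prove Theorem \ref{ffgvfgfhthjghgjhg} at all: it is stated verbatim as the main result of Part I and the reader is referred to \cite{PI}. You correctly identify this at the end, and for the purposes of this paper the citation is the whole ``proof''. So as a description of what this paper does, your last sentence is right and sufficient.

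Your sketch, however, should not be presented as ``the proof that lives in \cite{PI}'' without caveat, and as a standalone argument it has a genuine gap precisely at the point you flag as the main obstacle, step \emph{(iv)}. The Besicovitch covering plus local-profile-gluing scheme is the classical route (Conti--Fonseca--Leoni style), but Poliakovsky's upper-bound technique in \cite{pol},\cite{polgen},\cite{PI} is built around a global construction adapted to the differential constraints, exactly because local patching is where the constraints break. Concretely: on a transition region between a block $Q_i$ with normal $\vec\nu_v(x_0^i)$ and a neighbouring block with a different normal or different constants, you cannot simply ``interpolate while keeping $v_\e$ in $\mathcal B$''. A convex combination or cutoff of two divergence-free fields multiplied by a cutoff function is not divergence-free, and a cutoff of two gradients is not a gradient; restoring the constraints requires solving an auxiliary problem (a corrector with controlled $W^{n,p}$ norms) on each transition cube, and one must then show that these correctors contribute $o(1)$ energy after the $1/\e$ rescaling. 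This is the technically hard part of the upper bound for constrained problems and it is not addressed by the observation that the profiles equal the constants near the slab faces $y\cdot\vec\nu=\pm 1/2$ (that only handles the matching in the normal direction within a single block, not the tangential matching between blocks with different data). Similarly, in step \emph{(iii)} the claim that the diffuse part contributes $o(1)$ because ``Taylor expansion controls $\frac1\e F$ by the diffuse part of $|Dv|$'' needs care: the factor $1/\e$ against a mollification at scale $\delta(\e)$ produces terms of order $\delta(\e)/\e$ times the Cantor part unless $\delta(\e)\gg\e$, and one must then re-check the jump-layer analysis at that scale. These are exactly the issues the machinery of \cite{PI} is designed to handle, so the honest statement is: cite \cite{PI}; the sketch is a plausible roadmap but not a proof.
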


 The main result of this paper provides with that, for the general problem \er{fhjvjhvjhvnlhiohoioiiy},
when $G,W$ don't depend on $x$ explicitly, $K^*(\cdot)$ is a lower
bound in the sense of {\bf(*)}. More precisely, we have the
following Theorem:
\begin{theorem}\label{dehgfrygfrgygenjklhhjkghhjggjfjkh}
Let $\O\subset\R^N$ be an open set and
$$F:\R^{\big(\{k\times N\}+\{d\times N\}+m\big)\times N^n}\times\ldots\times\R^{\big(\{k\times N\}+\{d\times N\}+m\big)\times N}
\times\R^{\{k\times N\}+\{d\times N\}+m}\,\to\,\R$$ be a nonnegative
continuous function. Furthermore assume that $v:=(\nabla
u,h,\psi)\in\mathcal{B}(\O)\cap BV\big(\O,\R^{k\times
N}\times\R^{d\times N}\times\R^m\big)\cap
L^\infty\big(\O,\R^{k\times N}\times\R^{d\times N}\times\R^m\big)$
satisfies
$$F\Big(0,\ldots,0,v(x)\Big)=0\quad\text{for a.e.}\;\;x\in\O.$$
Then for every $\{v_\varepsilon\}_{\varepsilon>0}\subset
\mathcal{B}(\Omega)\cap W^{n,1}_{loc}\big(\O,\R^{k\times
N}\times\R^{d\times N}\times\R^m\big)$, such that $v_\varepsilon\to
v$ in $L^p$ as $\e\to 0^+$, we have $$\liminf_{\varepsilon\to
0^+}\int_{\Omega}\frac{1}{\varepsilon}F\Big(\varepsilon^n\nabla^n
v_\e(x),\ldots,\varepsilon\nabla v_\e(x)\,,\,v(x)\Big)dx\geq
K^{*}(v).$$ Here $K^{*}(\cdot)$ is defined by \er{hfighfighfihhioh}
with respect to $L^p$ topology.
\end{theorem}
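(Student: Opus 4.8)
The plan is to prove the lower bound by the blow-up method of Fonseca and M\"uller, localizing the excess energy on the jump set $J_v$. One may assume the left-hand side is finite and, passing to a subsequence $\varepsilon_n\downarrow 0$, that the integrals converge to $L:=\liminf$. Introduce the positive Radon measures on $\O$
\[
\mu_n(A):=\int_A\frac1{\varepsilon_n}F\Big(\varepsilon_n^n\nabla^n v_{\varepsilon_n},\ldots,\varepsilon_n\nabla v_{\varepsilon_n},v(x)\Big)\,dx ,
\]
which satisfy $\mu_n(\O)\to L<\infty$; after a further subsequence $\mu_n\weakly\mu$ weakly-$*$ with $\mu\ge0$ and $\mu(\O)\le L$. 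Writing $\lambda:=\mathcal H^{N-1}\res(\O\cap J_v)$ and using the Besicovitch differentiation theorem to split $\mu=g\lambda+\mu_s$ (with $g:=d\mu/d\lambda\ge0$ finite $\lambda$-a.e., $\mu_s\perp\lambda$, $\mu_s\ge0$), it suffices to show $g(x_0)\ge E_{abst}(v^+(x_0),v^-(x_0),\vec\nu_v(x_0))$ for $\mathcal H^{N-1}$-a.e. $x_0\in\O\cap J_v$, since then $L\ge\mu(\O)\ge\int_{\O\cap J_v}g\,d\mathcal H^{N-1}=K^*(v)$.

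Next I would fix $x_0\in\O\cap J_v$ in the full-measure set where: the derivative $g(x_0)=\lim_{r\to0^+}\mu(Q_r)/r^{N-1}$ exists and is finite, with $Q_r:=x_0+rQ(\vec\nu)$ and $\vec\nu:=\vec\nu_v(x_0)$; $\mathcal H^{N-1}(J_v\cap Q_r)/r^{N-1}\to1$; and $x_0$ is an approximate jump point, so $r^{-N}\int_{Q_r}|v(x)-\eta_{x_0}((x-x_0)/r)|\,dx\to0$, where $\eta_{x_0}:=\eta(v^+(x_0),v^-(x_0),\vec\nu)$. Since $v=(\nabla u,h,\psi)\in\mathcal B(\O)\cap BV$, the jump $v^+(x_0)-v^-(x_0)$ is compatible with the differential constraints (rank-one in $\vec\nu$ for the gradient block, annihilated by $\vec\nu$ for the $\Div$-free block), so $\eta_{x_0}\in\mathcal B(\R^N)$ and $E_{abst}(v^+(x_0),v^-(x_0),\vec\nu)$ makes sense. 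For $r>0$ set $w^{(r)}_n(y):=v_{\varepsilon_n}(x_0+ry)$ on $Q(\vec\nu)$ and, replacing the first block by $\nabla_y[r^{-1}u_{\varepsilon_n}(x_0+r\,\cdot)]$, observe $w^{(r)}_n\in\mathcal B(Q(\vec\nu))\cap W^{n,1}_{loc}$, since the rescaling preserves both the gradient structure and $\Div h\equiv0$. Changing variables, for a.e. $r$ with $\mu(\partial Q_r)=0$,
\[
\frac{\mu(Q_r)}{r^{N-1}}=\lim_{n\to\infty}\int_{Q(\vec\nu)}\frac1{(\varepsilon_n/r)}F\Big((\varepsilon_n/r)^n\nabla^n w^{(r)}_n,\ldots,(\varepsilon_n/r)\nabla w^{(r)}_n,v(x_0+ry)\Big)\,dy ,
\]
while $\|w^{(r)}_n-v(x_0+r\,\cdot)\|_{L^p(Q(\vec\nu))}\to0$ as $n\to\infty$ for fixed $r$, and $v(x_0+r\,\cdot)\to\eta_{x_0}$ in $L^p(Q(\vec\nu))$ as $r\to0^+$ by the jump-point property together with $v\in L^\infty$.

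A diagonal extraction then produces $r_n\downarrow0$ with $\delta_n:=\varepsilon_n/r_n\to0$, $w_n:=w^{(r_n)}_n\to\eta_{x_0}$ in $L^p(Q(\vec\nu))$, and $\liminf_{n}\int_{Q(\vec\nu)}\delta_n^{-1}F(\delta_n^n\nabla^n w_n,\ldots,\delta_n\nabla w_n,v(x_0+r_ny))\,dy\le g(x_0)$. Since $\{w_n\}$ is admissible in the definition of $E_{abst}(v^+(x_0),v^-(x_0),\vec\nu)$ — it lies in $\mathcal B(Q(\vec\nu))\cap W^{n,1}_{loc}$ and converges to $\eta_{x_0}$ in $L^p$ — and since a sequential lower limit along any $\delta_n\to0^+$ dominates the $\Gamma$-$\liminf$ (spread the competitors into a family indexed by all $\varepsilon$, noting the $\liminf$ ignores the interpolating indices), one gets $g(x_0)\ge E_{abst}(v^+(x_0),v^-(x_0),\vec\nu)$ once the frozen coefficient $v(x_0+r_n\,\cdot)$ in the last slot of $F$ is reconciled with its limit $\eta_{x_0}$. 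Integrating over $\O\cap J_v$ finishes the proof.

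The hard part is exactly this last reconciliation. Because $F$ is only assumed continuous, with no growth or one-sided Lipschitz hypothesis, one cannot absorb the $1/\delta_n$ factor through a crude continuity estimate; instead one must use that $v(x_0+r_n\,\cdot)$ equals the constants $v^\pm$ off a set whose measure tends to $0$, and combine this with a truncation / equi-integrability argument (or a density argument inside the competitor class defining $E_{abst}$) to render the corresponding energy discrepancy negligible in the $\liminf$. This step, together with verifying the quantitative ``good point'' properties and the admissibility of the rescaled competitors, is where essentially all of the work lies; the measure-theoretic skeleton — weak-$*$ compactness, Radon--Nikodym splitting, and the blow-up rescaling — is routine.
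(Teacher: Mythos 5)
Your skeleton is the paper's: the statement is deduced from the general lower bounds of Theorems \ref{dehgfrygfrgy}--\ref{dehgfrygfrgygenjklhhj}, whose proof is exactly the blow-up argument you describe --- weak-$*$ compactness of the energy measures, localization of the limit measure on the $\mathcal{H}^{N-1}$ $\sigma$-finite jump set, rescaling at a.e.\ jump point, and a diagonal extraction producing admissible competitors for the abstract cell energy. Two structural differences are worth recording. First, the paper does not use the Besicovitch decomposition on oriented cubes: it bounds $\mu(D)$ from below by the integral of the upper $(N-1)$-density $\limsup_{\rho\to 0^+}\mu(B_\rho(x))/(\omega_{N-1}\rho^{N-1})$ via the density theorem (Theorem 2.56 of \cite{amb}), blows up on \emph{balls}, and only afterwards converts the ball cell energy into the cube cell energy by tiling a large ball with unit cubes straddling the hyperplane and counting. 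Your cube blow-up is legitimate (the derivative of $\mu$ with respect to $\mathcal{H}^{N-1}\res J_v$ can be computed along cubes oriented by $\vec\nu_v(x_0)$ at $\mathcal{H}^{N-1}$-a.e.\ point of the rectifiable set $J_v$) and shortcuts the tiling step. Your device of spreading a sequence of competitors indexed by $\delta_n\downarrow 0$ into a family indexed by all $\e>0$ is also correct, since the $\liminf$ over the continuum is dominated by the $\liminf$ along the subsequence.

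The one substantive point is the step you yourself flag as ``where essentially all of the work lies'': reconciling the frozen coefficient $v(x_0+r_n\,\cdot)$ occupying the last slot of $F$ with its limit $\eta_{x_0}$. You are right that with $F$ merely continuous and no growth hypothesis this cannot be forced through a continuity or equi-integrability estimate (there is no equi-integrability of $\frac{1}{\delta_n}F$ to exploit), so as written your proof is incomplete there. What you should take from the paper is that this step does not occur in its argument: in Theorems \ref{dehgfrygfrgy}, \ref{dehgfrygfrgygen} and \ref{dehgfrygfrgygenjklhhj} the zeroth-order slot of the integrand carries the \emph{competitor} itself (any genuine $x$-dependence being treated separately through the structural hypothesis \er{vcjhfjhgjkg}, which is vacuous for $x$-independent $F$), and the diagonal extraction then delivers exactly the $L^q$-convergence of the rescaled zeroth slot to $\xi(\cdot,v^+,v^-,\vec\nu)$ that admissibility in the cell formula requires --- no reconciliation is needed. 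If you run your blow-up with $v_\e$ rather than $v$ in the last argument, your difficulty disappears and the proof closes; if you insist on the literal display with $v(x)$ frozen, you need an additional argument comparing the two functionals, which neither your proposal nor the paper's reduction to its general theorems explicitly supplies.
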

For slightly generalized formulation and additional details see
Theorem \ref{dehgfrygfrgygenjklhhj}. See also Theorem
\ref{dehgfrygfrgygen} as an analogous result for more general
functionals than that defined by \er{fhjvjhvjhvnlhiohoioiiy}.

As we saw there is a natural question: weather in general
$K^*(\cdot)\equiv K_{per}(\cdot)\,$? The answer yes will mean that,
in the case when $G,W$ are $C^1$ functions which don't depend on $x$
explicitly, the upper bound in Theorem \ref{ffgvfgfhthjghgjhg} will
coincide with the lower bound of Theorem
\ref{dehgfrygfrgygenjklhhjkghhjggjfjkh} and therefore we will find
the full $\Gamma$-limit in the case of $BV$ limiting functions. The
equivalent question is weather
\begin{equation*}
E_{abst}(v^+,v^-,\vec\nu,x)= E_{per}(v^+,v^-,\vec\nu,x),
\end{equation*}
where $E_{per}(\cdot)$ is defined in \er{Energia2} and
$E_{abst}(\cdot)$ is defined by \er{Energia3}. In section
\ref{vdhgvdfgbjfdhgf} we formulate and prove some partial results
that refer to this important question. In particular we prove that
this is indeed the case for the general problem \er{b2..} i.e. when
we have no prescribed differential constraint. More precisely, we
have the following Theorem:
\begin{theorem}\label{dehgfrygfrgygenbgggggggggggggkgkgthtjtfnewbhjhjkgj}
Let $G\in C^1\big(\R^{m\times N^n}\times\R^{m\times
N^{(n-1)}}\times\ldots\times\R^{m\times N}\times \R^m,\R\big)$ and
$W\in C^1(\R^m,\R)$ be nonnegative functions such that
$G\big(0,0,\ldots,0,b)= 0$ for every $b\in\R^m$ and there exist
$C>0$ and $p\geq 1$ satisfying
\begin{multline}\label{hgdfvdhvdhfvjjjjiiiuyyyjitghujtrnewkhjklhkl}
\frac{1}{C}|A|^p
\leq F\Big(A,a_1,\ldots,a_{n-1},b\Big) \leq
C\bigg(|A|^p+\sum_{j=1}^{n-1}|a_j|^{p}+|b|^p+1\bigg)\quad \text{for
every}\;\;\big(A,a_1,a_2,\ldots,a_{n-1},b\big),
\end{multline}
where we denote
$$F\Big(A,a_1,\ldots,a_{n-1},b\Big):=G\Big(A,a_1,\ldots,a_{n-1},b\Big)+W(b)$$
Next let $\psi\in BV(\R^N,\R^{m})\cap L^\infty$ be such that $\|D
\psi\|(\partial\Omega)=0$ and $W\big(\psi(x)\big)=0$ for a.e.
$x\in\O$.
Then $K^*(\psi)=K_{per}(\psi)$ and for every
$\{\varphi_\e\}_{\e>0}\subset W^{n,p}_{loc}(\O,\R^m)$ such that
$\varphi_\e\to \psi$ in $L^p_{loc}(\O,\R^m)$ as $\e\to 0^+$, we have
\begin{multline}\label{a1a2a3a4a5a6a7s8hhjhjjhjjjjjjkkkkgenhjhhhhjtjurtnewjgvjhgv}
\liminf_{\e\to 0^+}I_\e(\varphi_\e):=\liminf_{\e\to
0^+}\frac{1}{\e}\int_\O F\bigg(\,\e^n\nabla^n
\varphi_\e(x),\,\e^{n-1}\nabla^{n-1}\varphi_\e(x),\,\ldots,\,\e\nabla \varphi_\e(x),\, \varphi_\e(x)\bigg)dx\\
\geq K_{per}(\psi):= \int_{\O\cap J_\psi}\bar
E_{per}\Big(\psi^+(x),\psi^-(x),\vec \nu(x)\Big)d \mathcal
H^{N-1}(x)\,,
\end{multline}
where
\begin{multline}\label{L2009hhffff12kkkhjhjghghgvgvggcjhggghtgjutnewjgkjgjk}
\bar E_{per}\Big(\psi^+,\psi^-,\vec \nu\Big)\;:=\;\\
\inf\Bigg\{\int_{Q_{\vec \nu}}\frac{1}{L} F\bigg(L^n\,\nabla^n
\zeta,\,L^{n-1}\,\nabla^{n-1} \zeta,\,\ldots,\,L\,\nabla
\zeta,\,\zeta\bigg)\,dx:\;\; L\in(0,+\infty)\,,\;\zeta\in
\mathcal{\tilde D}_{per}(\psi^+,\psi^-,\vec \nu)\Bigg\}\,,
\end{multline}
with
\begin{multline}\label{L2009Ddef2hhhjjjj77788hhhkkkkllkjjjjkkkhhhhffggdddkkkgjhikhhhjjddddkkjkjlkjlkintuukgkggk}
\mathcal{\tilde D}_{per}(\psi^+,\psi^-,\vec \nu):=
\bigg\{\zeta\in C^n(\R^N,\R^m):\;\;\zeta(y)=\psi^-\;\text{ if }\;y\cdot\vec\nu\leq-1/2,\\
\zeta(y)=\psi^+\;\text{ if }\; y\cdot\vec\nu(x)\geq 1/2\;\text{ and
}\;\zeta\big(y+\vec k_j\big)=\zeta(y)\;\;\forall j=2,3,\ldots,
N\bigg\}\,.
\end{multline}
Here $Q_{\vec \nu}:=\{y\in\R^N:\;|y\cdot \vec
\nu_j|<1/2\;\;\;\forall j=1,2\ldots N\}$ where $\{\vec \nu_1,\vec
\nu_2,\ldots,\vec \nu_N\}\subset\R^N$ is an orthonormal base in
$\R^N$ such that $\vec \nu_1:=\vec \nu$. Moreover, there exists e
sequence $\{\psi_\e\}_{\e>0}\subset C^\infty(\R^N,\R^m)$ such that
$\int_\O\psi_\e(x)dx=\int_\O \psi(x)dx$, for every $q\geq 1$ we have
$\psi_\e\to \psi$ in $L^q(\O,\R^m)$ as $\e\to 0^+$, and we have
\begin{multline}\label{a1a2a3a4a5a6a7s8hhjhjjhjjjjjjkkkkgenhjhhhhjtjurtgfhfhfjfjfjnewjkggujk}
\lim_{\e\to 0^+}I_\e(\psi_\e):=\lim_{\e\to 0^+}\frac{1}{\e}\int_\O
F\bigg(\,\e^n\nabla^n
\psi_\e(x),\,\e^{n-1}\nabla^{n-1}\psi_\e(x),\,\ldots,\,\e\nabla \psi_\e(x),\, \psi_\e(x)\bigg)dx\\
=K_{per}(\psi):= \int_{\O\cap J_\psi}\bar
E_{per}\Big(\psi^+(x),\psi^-(x),\vec \nu(x)\Big)d \mathcal
H^{N-1}(x)\,.
\end{multline}
\end{theorem}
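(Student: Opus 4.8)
\emph{Proof strategy.} The statement bundles three things — the identity $K^{*}(\psi)=K_{per}(\psi)$, the lower bound, and the explicit recovery sequence attaining it — and I would reduce the first and last to the two general theorems already at hand, leaving the identity as the only real work. For the lower bound: $F=G+W$ is continuous, nonnegative and $x$-independent, and $F(0,\ldots,0,\psi(x))=G(0,\ldots,0,\psi(x))+W(\psi(x))=0$ for a.e. $x\in\O$ since $G(0,\ldots,0,b)\equiv0$ and $W(\psi)=0$ a.e.; hence Theorem~\ref{dehgfrygfrgygenjklhhjkghhjggjfjkh}, in the constraint-free case (where $\mathcal B(\O)=L^1_{loc}$), gives $\liminf_{\e\to0^+}I_\e(\varphi_\e)\ge K^{*}(\psi)$ whenever $\varphi_\e\to\psi$ in $L^p$, and the passage from $L^p_{loc}$ to $L^p$ convergence is routine (exhaust $\O$ by $\O'\subset\subset\O$, use $F\ge0$ and $\|D\psi\|(\partial\O)=0$). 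For the recovery sequence: $G,W\in C^1$, $\psi\in\mathcal B(\R^N)\cap BV\cap L^\infty$, $\|D\psi\|(\partial\O)=0$ and $F(0,\ldots,0,\psi)=0$ a.e., so Theorem~\ref{ffgvfgfhthjghgjhg} supplies $\psi_\e\in C^\infty(\R^N,\R^m)$ with $\psi_\e\to\psi$ in every $L^q$, $\int_\O\psi_\e=\int_\O\psi$ (this normalization being part of that construction), and $\lim_{\e\to0^+}I_\e(\psi_\e)=K_{per}(\psi)$. Once $K^{*}(\psi)=K_{per}(\psi)$ is known, these are precisely \er{a1a2a3a4a5a6a7s8hhjhjjhjjjjjjkkkkgenhjhhhhjtjurtnewjgvjhgv} and \er{a1a2a3a4a5a6a7s8hhjhjjhjjjjjjkkkkgenhjhhhhjtjurtgfhfhfjfjfjnewjkggujk}.

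Thus the whole point is $K^{*}(\psi)=K_{per}(\psi)$. Because $F$ is $x$-independent, $E_{abst}$ and $E_{per}$ depend on $x$ only through the Borel maps $\psi^{\pm},\vec\nu$, and in the constraint-free case $E_{per}(\psi^+,\psi^-,\vec\nu,x)$ is exactly the quantity $\bar E_{per}(\psi^+,\psi^-,\vec\nu)$ of \er{L2009hhffff12kkkhjhjghghgvgvggcjhggghtgjutnewjgkjgjk}; hence it suffices to prove the pointwise equality $E_{abst}(\psi^+,\psi^-,\vec\nu)=\bar E_{per}(\psi^+,\psi^-,\vec\nu)$ for all admissible jump data and then integrate over $\O\cap J_\psi$ against $\mathcal H^{N-1}$. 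By \er{dghfihtihotj} the inequality $E_{abst}\le\bar E_{per}$ is automatic, so the task is $\bar E_{per}\le E_{abst}$. Fix the jump triple and $\delta>0$; by \er{Energia3} pick $\phi_\e\in W^{n,p}_{loc}(Q(\vec\nu),\R^m)$ with $\phi_\e\to\eta(\psi^+,\psi^-,\vec\nu)$ in $L^p(Q(\vec\nu))$ and, along a subsequence, $I_\e(\phi_\e)\to\ell\le E_{abst}+\delta$. The coercivity \er{hgdfvdhvdhfvjjjjiiiuyyyjitghujtrnewkhjklhkl} then forces $\e^{jp}\int_{Q(\vec\nu)}|\nabla^j\phi_\e|^p\,dy\to0$ for $j=1,\dots,n$ (top order from the estimate itself, the rest by interpolation), and — since there is no differential constraint — mollifying each $\phi_\e$ at a scale $\ll\e$ we may also assume $\phi_\e\in C^\infty$: for fixed $\e$ the energy depends continuously on $\phi_\e$ in $W^{n,p}$ by the $p$-growth of $F$, so $I_\e(\phi_\e)$ changes by only $o(1)$. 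The aim is to manufacture, for a single small $\e$ of the subsequence, one profile $\zeta\in\mathcal{\tilde D}_{per}(\psi^+,\psi^-,\vec\nu)$ with $\int_{Q_{\vec\nu}}\frac1\e F\big(\e^n\nabla^n\zeta,\ldots,\e\nabla\zeta,\zeta\big)\,dy\le\ell+C\delta$: taking $L=\e$ this yields $\bar E_{per}\le E_{abst}+C\delta$, and $\delta\to0$ finishes.

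The construction of $\zeta$ is a cut-and-paste surgery on $\phi_\e$ in two stages, writing $y=s\vec\nu+y'$ with $s=y\cdot\vec\nu$ and $y'$ in the unit cube of $\{\vec\nu\}^{\perp}$. Stage (i), \emph{tangential periodization}: glue the opposite tangential faces of $Q(\vec\nu)$ by interpolating, inside cutoff layers of width $w_\e$, between $\phi_\e$ and its tangential translates. Stage (ii), \emph{boundary matching in $s$}: using Fubini together with $\phi_\e\to\eta$ in $L^p$ and $\e^{jp}\int|\nabla^j\phi_\e|^p\to0$, choose slice levels $s_\e^{\pm}$ at which $\phi_\e(s_\e^{\pm},\cdot)$ is $L^p$-close to the constant $\psi^{\pm}$ and all scaled slice derivatives $\e^j\nabla^j\phi_\e(s_\e^{\pm},\cdot)$ are small in $L^p$, then for $s$ beyond $s_\e^{\pm}$ replace $\phi_\e$ by a $C^n$ interpolation to $\psi^{\pm}$ on a normal collar, equal to $\psi^{\pm}$ past it. In both stages the decisive structural fact is that $F(0,\ldots,0,b)=W(b)$ vanishes at the wells $\psi^{\pm}$: the modified function equals one of these constants outside the transition core, so the cutoffs cost energy only where they meet that core, a region carrying bounded energy; there the top-order contribution is $\lesssim(\text{width})^{-np}\,\|\phi_\e-\eta\|_{L^p}^{p}$ while the lower-order and inhomogeneous contributions are $\lesssim\e^{-1}\times(\text{measure of the non-constant part of the cutoff region})$, and both are driven to $0$ by letting the mollification radius, $w_\e$ and the collar width tend to $0$ at suitably related rates — in particular so that the non-constant part of each cutoff region has measure $o(\e)$. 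A final mollification upgrades $C^n$ to $C^\infty$, and one reads off the pair $(\zeta,L=\e)$.

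The main obstacle is exactly this last, quantitative step: balancing the cutoff scales. We have only $L^p$-convergence $\phi_\e\to\eta$ — no equi-integrability, no $L^\infty$ bound — and only top-order $p$-coercivity, whereas the inhomogeneous $+1$ in \er{hgdfvdhvdhfvjjjjiiiuyyyjitghujtrnewkhjklhkl} and the non-vanishing of $W$ along the interpolation range have to be killed by the $1/\e$ prefactor; so the mollification radius, $w_\e$ and the collar width must be calibrated simultaneously against the single (unknown) rate $\|\phi_\e-\eta\|_{L^p}\to0$ and against the powers $\e^{np-1}$, and borderline exponents ($np$ near $1$) call for the sharpest form of the coercivity estimate. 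It is also here that the hypothesis $A\equiv0$ (no prescribed differential constraint) is essential: with a constraint such as $\Div h\equiv0$, or a curl condition, the cut-and-paste would destroy admissibility and one would need constraint-preserving gluing operators — precisely the difficulty the paper leaves open in the general case.
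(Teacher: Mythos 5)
Your overall architecture matches the paper's: the lower bound is delegated to Theorem \ref{dehgfrygfrgygenjklhhjkghhjggjfjkh} (via Theorem \ref{dehgfrygfrgygen}), the recovery sequence to Theorem \ref{ffgvfgfhthjghgjhg}, and everything reduces to replacing a near-optimal admissible sequence $\phi_\e$ for the abstract cell energy by profiles in $\mathcal{\tilde D}_{per}$ without increasing the energy. The gap is in the surgery itself, and it is exactly the quantitative point you flag at the end without closing: no calibration of the scales can work as you describe. In the tangential gluing the interpolant between $\phi_\e$ and its tangential translate is not well-valued anywhere in the cutoff slab, so the term $\tfrac1\e\int_{\rm slab}W(\cdot)\,dx$ can only be estimated through the growth bound, whose additive constant already contributes $\gtrsim w_\e/\e$; this forces $w_\e=o(\e)$, and then the top-order cutoff cost $\e^{np-1}w_\e^{-np}\|\phi_\e-\eta\|_{L^p}^p$ diverges unless you know a rate for $\|\phi_\e-\eta\|_{L^p}$, which you do not. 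Choosing the widths "at suitably related rates" against an unknown rate of convergence is not a proof; an additional selection mechanism is required.

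The paper's argument (Lemmas \ref{L2009.02new}--\ref{L2009.02newgen}, Proposition \ref{L2009.02kkknew}) closes the gap with two devices. First, before truncating, it subtracts a fixed one-dimensional profile $\lambda_\e(x)=\lambda(x/\e)$ already lying in $\mathcal{\tilde D}_{per}$, so that the object to be cut off is $\theta_\e=\phi_\e-\lambda_\e\to0$ in $L^p$; the data converging to a zero of $F$ are kept intact and only the vanishing remainder is multiplied by cutoffs. Second, it uses De Giorgi slicing: the cutoff sits at position $t$ with transition layers of width $\e$, and the layer energy is averaged over $t$ in an interval of length $O(1)$; Fubini and the substitution $s=\e\tau$ absorb the factor $1/\e$ entirely, reducing the estimate to $\int F(m_\e,\e\vec Q\cdot\nabla\psi_{\e,\cdot},\psi_{\e,\cdot})\,dx\to0$, which follows from $L^p$-convergence and the growth bound with no rate information; a good $t$ is then selected along the subsequence, the truncated remainder is added back to $\lambda_\e$, and the cube $I^{t+\e}$ is rescaled to $Q_{\vec\nu}$ (whence the factors $r_\e\to1$). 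Your appeal to interpolation for the intermediate derivatives is correct and is exactly Lemma \ref{nfjghfighfihjtfohjt}, and your Fubini selection of good normal slices is in the right spirit, but it must be applied to the cutoff position in \emph{all} directions and combined with the prior subtraction of $\lambda_\e$; with those two changes your two-stage surgery becomes the paper's construction, and the rest of your reduction (including $K^*(\psi)=K_{per}(\psi)$ from the matching upper bound) is correct.
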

See Theorem \ref{dehgfrygfrgygenbgggggggggggggkgkgthtjtfnew} as a
slightly generalized result.
\begin{remark}\label{vyuguigiugbuikkk}
In what follows we use some special notations and apply some basic
theorems about $BV$ functions. For the convenience of the reader we
put these notations and theorems in Appendix.
\end{remark}

\section{The abstract lower bound}
\begin{definition}\label{gdhgvdgjkdfgjkh}
Given an open set $G\subset\R^N$ and a \underline{vector} $\vec
q=(q_1,q_2,\ldots, q_m)\in\R^m$, such that $q_j\geq 1$ for every
$1\leq j\leq m$, define the Banach space $L^{\vec q}(G,\R^m)$ as the
space of all (equivalency classes of a.e. equal) functions
$f(x)=\big(f_1(x),f_2(x),\ldots, f_m(x)\big):G\to\R^m$, such that
$f_j(x)\in L^{q_j}(G,\R)$ for every $1\leq j\leq m$, endowed with
the norm $\|f\|_{L^{\vec
q}(G,\R^m)}:=\sum_{j=1}^{m}\|f_j\|_{L^{q_j}(G,\R)}$. Next define, as
usual, $L^{\vec q}_{loc}(G,\R^m)$ as a space of all functions
$f:G\to\R^m$, such that for every compactly embedded
$U\subset\subset G$ we have $f\in L^{\vec q}_{loc}(U,\R^m)$. Finally
in the case where $q\in[1,+\infty)$ is a \underline{scalar} we as,
usual, consider  $L^{q}(G,\R^m):=L^{\vec q}(G,\R^m)$ and
$L^{q}_{loc}(G,\R^m):=L^{\vec q}_{loc}(G,\R^m)$, where $\vec
q:=(q,q,\ldots,q)$.
\end{definition}
\begin{definition}\label{gdhgvdgjkdfgjkhdd}
Given a vector $x:=(x_1,x_2,\ldots, x_m)\in\R^m$ and a
\underline{vector} $\vec q=(q_1,q_2,\ldots, q_m)\in\R^m$, such that
$q_j\geq 1$, we define $|x|^{\vec q}:=\sum_{j=1}^{m}|x_j|^{q_j}$.
Note that for a \underline{scalar} $q$, $|x|^q$ and
$|x|^{(q,q,\ldots q)}$ are, in general, different quantities,
although they have the same order, i.e. $|x|^q/C\leq|x|^{(q,q,\ldots
q)}\leq C|x|^q$ for some constant $C>0$.
\end{definition}
\begin{theorem}\label{dehgfrygfrgy}
Let $\mathcal{M}$ be a subset of $\R^m$, $\O\subset\R^N$ be an open
set and $D\subset\O$ be a $\mathcal{H}^{N-1}$ $\sigma$-finite Borel
set.  Consider $F\in C(\R^{m\times N}\times \R^m\times\R^N,\R)$,
which satisfies $F\geq 0$ and the following property: For every
$x_0\in\O$ and every $\tau>0$ there exists $\alpha>0$ satisfying
\begin{equation}\label{vcjhfjhgjkg}
F(a,b,x)-F(a,b,x_0)\geq -\tau F(a,b,x_0)\quad\forall\, a\in
\R^{m\times N}\;\forall\, b\in\R^m\;\forall\, x\in\R^N\;\;\text{such
that}\;\;|x-x_0|<\alpha\,.
\end{equation}
Furthermore, let $\vec A\in \mathcal{L}(\R^{d\times N};\R^m)$,
$q=(q_1,q_2,\ldots, q_m)\in\R^m$, $p\geq 1$ and
$v\in\mathcal{D}'(\O,\R^d)$ be such that $q_j\geq 1$, $\vec
A\cdot\nabla v\in L^q_{loc}(\O,\R^m)$ and $F\big(0,\{\vec
A\cdot\nabla v\}(x),x\big)=0$ a.e.~in $\O$. Assume also that there
exist three Borel mappings $\{\vec A\cdot\nabla v\}^+(x):D\to\R^m$,
$\{\vec A\cdot\nabla v\}^-(x):D\to\R^m$ and $\vec n(x):D\to S^{N-1}$
such that for every $x\in D$ we have
\begin{multline}\label{L2009surfhh8128odno888jjjjjkkkkkk}
\lim\limits_{\rho\to 0^+}\frac{\int_{B_\rho^+(x,\vec
n(x))}\big|\{\vec A\cdot\nabla v\}(y)-\{\vec A\cdot\nabla
v\}^+(x)\big|^q\,dy} {\mathcal{L}^N\big(B_\rho(x)\big)}=0,\\
\lim\limits_{\rho\to 0^+}\frac{\int_{B_\rho^-(x,\vec
n(x))}\big|\{\vec A\cdot\nabla v\}(y)-\{\vec A\cdot\nabla
v\}^-(x)\big|^q\,dy}
{\mathcal{L}^N\big(B_\rho(x)\big)}=0\quad\quad\quad\quad\text{(see
Definition \ref{gdhgvdgjkdfgjkhdd})}.
\end{multline}

Then for every
$\{v_\e\}_{\e>0}\subset\mathcal{D}'(\O,\R^d)$, satisfying $\vec
A\cdot\nabla v_\e\in L^q_{loc}(\O,\R^m)\cap W^{1,p}_{loc}(\O,\R^m)$,
$\{\vec A\cdot\nabla v_\e\}(x)\in \mathcal{M}$ for a.e. $x\in\R^N$
and $\vec A\cdot\nabla v_\e\to \vec A\cdot\nabla v$ in
$L^q_{loc}(\O,\R^m)$ as $\e\to 0^+$, we have
\begin{multline}\label{a1a2a3a4a5a6a7s8hhjhjjhjjjjjjkkkk}
\liminf_{\e\to 0^+}\frac{1}{\e}\int_\O F\Big(\,\e\nabla\big\{\vec
A\cdot\nabla v_\e\big\}(x),\, \{\vec A\cdot\nabla v_\e\}(x),\,x\Big)dx\geq\\
\int_{D}E_0\Big(\{\vec A\cdot\nabla v\}^+(x),\{\vec A\cdot\nabla
v\}^-(x),\vec n(x),x\Big)d \mathcal H^{N-1}(x)\,,
\end{multline}
where for every $x\in\R^N$, $a,b\in\R^m$ and any unit vector $\vec
\nu\in\R^N$
\begin{multline}\label{a1a2a3a4a5a6a7s8hhjhjjhjjjjjjkkkkjgjgjgjhlllllkkk}
E_0\big(a,b,\vec \nu,x\big):=\inf\Bigg\{\liminf_{\e\to
0^+}\frac{1}{\e}\int_{I_{\vec \nu}} F\Big(\,\e\nabla\big\{\vec
A\cdot\nabla \varphi_\e\big\}(y),\, \{\vec A\cdot\nabla
\varphi_\e\}(y),\,x\Big)dy:\;\; \varphi_\e\in\mathcal{D}'(I_{\vec
\nu},\R^d)\quad\text{s.t.}
\\ \vec A\cdot\nabla \varphi_\e\in L^q(I_{\vec
\nu},\R^m)\cap W^{1,p}(I_{\vec \nu},\R^m),\;\;\vec A\cdot\nabla
\varphi_\e\in \mathcal{M}\; \text{a.e. in}\;I_{\vec
\nu}\;\;\text{and}\;\;\{\vec A\cdot\nabla \varphi_\e\}(y)\to
\xi(y,a,b,\vec \nu)\;\text{in}\; L^q(I_{\vec \nu},\R^m)\Bigg\}.
\end{multline}
Here $I_{\vec \nu}:=\{y\in\R^N:\;|y\cdot \vec\nu_j|<1/2\;\;\;\forall
j=1,2\ldots N\}$ where
$\{\vec\nu_1,\vec\nu_2,\ldots,\vec\nu_N\}\subset\R^N$ is an
orthonormal base in $\R^N$ such that $\vec\nu_1:=\vec \nu$ and
\begin{equation}\label{fhyffgfgfgfffgf}
\xi(y,a,b,\vec \nu):=\begin{cases}a\quad\text{if}\;y\cdot\vec
\nu>0\,,\\ b\quad\text{if}\;y\cdot\vec \nu<0\,.\end{cases}
\end{equation}
\end{theorem}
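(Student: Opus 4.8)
The plan is to argue by the blow-up technique. Write $w:=\vec A\cdot\nabla v$ and $w_\e:=\vec A\cdot\nabla v_\e$, so that $w_\e\to w$ in $L^q_{loc}(\O,\R^m)$, $w_\e\in L^q_{loc}\cap W^{1,p}_{loc}$ takes values in $\mathcal M$ a.e., and $F(0,w(x),x)=0$ a.e. One may assume the left-hand side of \er{a1a2a3a4a5a6a7s8hhjhjjhjjjjjjkkkk} is finite and, after extracting a subsequence, that it is a genuine limit $L<+\infty$. Introduce the non-negative finite Radon measures $\mu_\e:=\frac1\e\,F(\e\nabla w_\e,w_\e,\cdot)\,\mathcal L^N\llcorner\O$; since $\sup_\e\mu_\e(\O)<\infty$ along the subsequence, a further subsequence satisfies $\mu_\e\rightharpoonup\mu$ weakly-$*$ to a non-negative finite Radon measure $\mu$ on $\O$, and by lower semicontinuity $L\geq\mu(\O)\geq\mu(D)$. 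Hence it suffices to prove that for $\mathcal H^{N-1}$-a.e.\ $x_0\in D$ one has
\begin{equation}\label{plan:density}
\liminf_{\rho\to0^+}\frac{\mu\big(Q_\rho(x_0)\big)}{\rho^{N-1}}\;\geq\;E_0\big(\{\vec A\cdot\nabla v\}^+(x_0),\,\{\vec A\cdot\nabla v\}^-(x_0),\,\vec n(x_0),\,x_0\big),
\end{equation}
where $Q_\rho(x_0):=x_0+\rho\,I_{\vec n(x_0)}$ is the open cube centred at $x_0$, of side $\rho$, built on the same orthonormal basis (first vector $\vec n(x_0)$) used to define $I_{\vec n(x_0)}$. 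Indeed, given \er{plan:density}, the Radon--Nikodym decomposition of $\mu$ relative to the $\sigma$-finite measure $\mathcal H^{N-1}\llcorner D$, the Besicovitch derivation theorem applied to the cubes $Q_\rho(x_0)$, and the standard fact that $\mathcal H^{N-1}$-a.e.\ point of $D$ has $(N-1)$-dimensional upper density at most one, together yield $\mu(D)\geq\int_D E_0(\cdots)\,d\mathcal H^{N-1}$, which is \er{a1a2a3a4a5a6a7s8hhjhjjhjjjjjjkkkk}.

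To prove \er{plan:density}, fix such a point $x_0\in D$ — one at which \er{L2009surfhh8128odno888jjjjjkkkkkk} holds and which lies in the full-measure set above — and abbreviate $a:=\{\vec A\cdot\nabla v\}^+(x_0)$, $b:=\{\vec A\cdot\nabla v\}^-(x_0)$, $\vec\nu:=\vec n(x_0)$, $\Theta:=\liminf_{\rho\to0^+}\mu(Q_\rho(x_0))/\rho^{N-1}$, which may be assumed finite. Fix $\tau>0$ and let $\alpha=\alpha(\tau)>0$ be as in \er{vcjhfjhgjkg} at $x_0$. For $\rho>0$ small enough that $\overline{Q_\rho(x_0)}\subset\O$ and any $\e>0$, set $\tilde v_{\e,\rho}(y):=\tfrac1\rho v_\e(x_0+\rho y)$ on $I_{\vec\nu}$; then $\vec A\cdot\nabla\tilde v_{\e,\rho}(y)=w_\e(x_0+\rho y)=:\tilde w_{\e,\rho}(y)$ lies in $L^q(I_{\vec\nu})\cap W^{1,p}(I_{\vec\nu})$ and takes values in $\mathcal M$ a.e., and with $\delta:=\e/\rho$ the change of variables $x=x_0+\rho y$ gives
\begin{multline}\label{plan:scaling}
\frac{\mu_\e\big(Q_\rho(x_0)\big)}{\rho^{N-1}}\;=\;\frac1\delta\int_{I_{\vec\nu}}F\Big(\delta\,\nabla\tilde w_{\e,\rho}(y),\,\tilde w_{\e,\rho}(y),\,x_0+\rho y\Big)\,dy\\
\geq\;\frac{1-\tau}{\delta}\int_{I_{\vec\nu}}F\Big(\delta\,\nabla\tilde w_{\e,\rho}(y),\,\tilde w_{\e,\rho}(y),\,x_0\Big)\,dy,
\end{multline}
the inequality holding once $\rho\sqrt N<2\alpha$, by \er{vcjhfjhgjkg}. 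One then takes limits in the order $\e\to0^+$, then $\rho\to0^+$: for a fixed $\rho$ with $\mu(\partial Q_\rho(x_0))=0$ — which fails for at most countably many $\rho$, concentric cube boundaries being disjoint — one has $\mu_\e(Q_\rho(x_0))\to\mu(Q_\rho(x_0))$ as $\e\to0^+$, while $\tilde w_{\e,\rho}\to w_\rho:=w(x_0+\rho\,\cdot)$ in $L^q(I_{\vec\nu})$ (because $w_\e\to w$ in $L^q_{loc}$ and the fixed dilation is a bijection $I_{\vec\nu}\to Q_\rho(x_0)$) and $\delta\to0^+$; finally, the cube form of \er{L2009surfhh8128odno888jjjjjkkkkkk} (enlarge the half-balls to circumscribed half-cubes) gives $w_\rho\to\xi(\cdot,a,b,\vec\nu)$ in $L^q(I_{\vec\nu})$ as $\rho\to0^+$, with $\xi$ from \er{fhyffgfgfgfffgf}.

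A diagonal choice then finishes \er{plan:density}. Pick $\rho_j\downarrow0$ with $\rho_j\sqrt N<2\alpha$, $\mu(\partial Q_{\rho_j}(x_0))=0$, $\mu(Q_{\rho_j}(x_0))/\rho_j^{N-1}\to\Theta$ and $\|w_{\rho_j}-\xi(\cdot,a,b,\vec\nu)\|_{L^q(I_{\vec\nu})}<1/j$; then, using the two $\e\to0^+$ limits above, pick $\e_j\downarrow0$ with $\e_j<\rho_j/j$, $\big|\mu_{\e_j}(Q_{\rho_j}(x_0))-\mu(Q_{\rho_j}(x_0))\big|<\rho_j^{N-1}/j$ and $\|\tilde w_{\e_j,\rho_j}-w_{\rho_j}\|_{L^q(I_{\vec\nu})}<1/j$. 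Put $\psi_j:=\tilde v_{\e_j,\rho_j}$, $\delta_j:=\e_j/\rho_j\to0^+$; then $\vec A\cdot\nabla\psi_j\in L^q(I_{\vec\nu})\cap W^{1,p}(I_{\vec\nu})$, takes values in $\mathcal M$ a.e., $\vec A\cdot\nabla\psi_j\to\xi(\cdot,a,b,\vec\nu)$ in $L^q(I_{\vec\nu})$ (triangle inequality), and by \er{plan:scaling},
\begin{multline}\label{plan:final}
\liminf_{j\to\infty}\frac1{\delta_j}\int_{I_{\vec\nu}}F\Big(\delta_j\,\nabla\{\vec A\cdot\nabla\psi_j\},\,\{\vec A\cdot\nabla\psi_j\},\,x_0\Big)\,dy\\
\leq\;\frac1{1-\tau}\lim_{j\to\infty}\frac{\mu_{\e_j}(Q_{\rho_j}(x_0))}{\rho_j^{N-1}}\;=\;\frac{\Theta}{1-\tau}.
\end{multline}
Extending $\{\psi_j\}$ to a family $\{\varphi_\e\}_{\e>0}$ by $\varphi_\e:=\psi_j$ for $\e\in(\delta_{j+1},\delta_j]$ (the $\delta_j$ chosen strictly decreasing) gives $\vec A\cdot\nabla\varphi_\e\to\xi(\cdot,a,b,\vec\nu)$ in $L^q(I_{\vec\nu})$ as $\e\to0^+$; evaluating the $\liminf$ in \er{a1a2a3a4a5a6a7s8hhjhjjhjjjjjjkkkkjgjgjgjhlllllkkk} along the subsequence $\e=\delta_j$ and using \er{plan:final} shows this family is admissible there, so $E_0(a,b,\vec\nu,x_0)\leq\Theta/(1-\tau)$. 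Letting $\tau\to0^+$ gives \er{plan:density}.

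The main obstacle I anticipate is precisely this diagonal construction: one must arrange that the rescaled competitors $\psi_j$ \emph{exactly} converge to the model profile $\xi$ in $L^q(I_{\vec\nu})$, \emph{retain} all the side constraints built into \er{a1a2a3a4a5a6a7s8hhjhjjhjjjjjjkkkkjgjgjgjhlllllkkk} (the $W^{1,p}\cap L^q$-regularity and the pointwise membership in $\mathcal M$), and \emph{carry energy} at scale $\delta_j$ no larger than $\mu_{\e_j}(Q_{\rho_j}(x_0))/\rho_j^{N-1}$ — which forces $\rho_j$ to range inside the full-measure set of radii on which $\mu$ gives no mass to $\partial Q_{\rho_j}(x_0)$ and on which the blow-up of $w$ is already close to $\xi$ — so the two-parameter bookkeeping (first $\e\to0$ with $\rho$ fixed, then $\rho\to0$) must be done with care. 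By contrast, freezing $x$ to $x_0$ in the integrand via \er{vcjhfjhgjkg} up to the harmless factor $(1-\tau)^{-1}$, and the measure-theoretic passage from \er{plan:density} to \er{a1a2a3a4a5a6a7s8hhjhjjhjjjjjjkkkk}, are comparatively routine.
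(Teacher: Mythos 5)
Your overall architecture coincides with the paper's: extract a weak-$*$ limit measure $\mu$ of the energy densities, reduce to a pointwise density estimate on $D$, blow up, freeze the $x$-dependence via \er{vcjhfjhgjkg} at the cost of a factor $(1-\tau)$, and diagonalize in $(\e,\rho)$ to produce an admissible competitor for the cell formula. The diagonal bookkeeping you worry about is in fact fine (and is essentially what the paper does). The genuine gap is in the step you dismiss as ``comparatively routine'': the passage from the pointwise cube-density bound \er{plan:density} to $\mu(D)\geq\int_D E_0\,d\mathcal H^{N-1}$. The theorem assumes only that $D$ is an $\mathcal H^{N-1}$ $\sigma$-finite Borel set with an arbitrary Borel orientation $\vec n$; it is \emph{not} assumed rectifiable. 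The ``standard fact'' you invoke --- upper $(N-1)$-density at most one --- is a statement about balls $B_\rho(x)$ normalized by $\omega_{N-1}\rho^{N-1}$ (Theorem 2.56 in \cite{amb}). For the adapted cubes $Q_\rho(x)$ of side $\rho$ normalized by $\rho^{N-1}$ the analogous bound $\limsup_\rho\mathcal H^{N-1}\big(D\cap Q_\rho(x)\big)/\rho^{N-1}\leq 1$ holds for rectifiable $D$ with $\vec n$ the approximate normal (this is the Fonseca--M\"uller setting), but for a general $\sigma$-finite set the best one gets from $Q_\rho(x)\subset B_{\rho\sqrt N/2}(x)$ is the constant $\omega_{N-1}(\sqrt N/2)^{N-1}>1$; likewise a Besicovitch/Morse derivation with respect to the rotated cubes only returns $d\mu/d\big(\mathcal H^{N-1}\llcorner D\big)$, which you cannot convert into a bound against $\rho^{N-1}$ without that missing density-one statement. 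So as written your reduction loses a dimensional constant and does not prove \er{a1a2a3a4a5a6a7s8hhjhjjhjjjjjjkkkk} in the stated generality.

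The paper avoids this entirely by never using cubes in the measure-theoretic step: it takes $\sigma(x):=\limsup_\rho\mu(B_\rho(x))/(\omega_{N-1}\rho^{N-1})$ over \emph{balls}, where Theorem 2.56 of \cite{amb} gives the sharp inequality $\mu(D)\geq\int_D\sigma\,d\mathcal H^{N-1}$, and blows up on $B_1(0)$. This produces a ball-based cell quantity $E_1(a,b,\vec\nu,x)$ carrying a prefactor $1/\omega_{N-1}$, and the conversion to the cube-based $E_0$ is then done by a separate tiling argument: rescaling $B_1(0)$ to $B_n(0)$, one observes that the slab $\{|z\cdot\vec\nu|<1/2\}\cap B_n(0)$ contains $\sim\omega_{N-1}n^{N-1}$ disjoint unit cubes obtained from $I_{\vec\nu}$ by integer translations orthogonal to $\vec\nu$ (under which the limit profile $\xi(\cdot,a,b,\vec\nu)$ is invariant), each contributing at least $E_0$; letting $n\to\infty$ gives $E_1\geq E_0$ (see \er{a1a2a3a4a5a6a7s8hhjhjjhjjjjjjkkkkjgjgjgjhlllllkkkggghhhjjjkkkkkkkkkkllllkkkkkkkkklklklklkllkklkkkkkjjkkjjkjkjkjk}--\er{a1a2a3a4a5a6a7s8hhjhjjhjjjjjjkkkkyiuouilokkkmnhhfghfhghgjkkkkkhghghffhjkljlkkkkkilljkkkipkpppppfbhfjkgjgkkk}). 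To repair your proof you must either add such a tiling lemma (or restrict to rectifiable $D$ with $\vec n$ its normal, where your cube-derivation argument becomes legitimate); the rest of your argument then goes through.
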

\begin{proof}
It is clear that we may assume that
\begin{equation}\label{a1a2a3a4a5a6a7s8hhjhjjhjjjjjjkkkkyiuouilokkk}
T_0:=\liminf_{\e\to 0^+}\frac{1}{\e}\int_\O
F\Big(\,\e\nabla\big\{\vec A\cdot\nabla v_\e\big\}(x),\, \{\vec
A\cdot\nabla v_\e\}(x),\,x\Big)dx<+\infty\,,
\end{equation}
otherwise it is trivial. Then, up to a subsequence
$\e_n\to 0^+$ as $n\to+\infty$, we have
\begin{equation}\label{a1a2a3a4a5a6a7s8hhjhjjhjjjjjjkkkkyiuouilokkkmnhh}
\frac{1}{\e_n}F\Big(\,\e_n\nabla\big\{\vec A\cdot\nabla
v_n\big\}(x),\, \{\vec A\cdot\nabla
v_n\}(x),\,x\Big)dx\rightharpoonup\mu\,,
\end{equation}
and
\begin{equation}\label{a1a2a3a4a5a6a7s8hhjhjjhjjjjjjkkkkyiuouilokkkmnhhfghfhghgjkkkkk}
T_0\geq\mu(\O)\,,
\end{equation}
where $\mu$ is some positive finite Radon measure on $\O$ and the
convergence is in the sense of the weak$^*$ convergence of finite
Radon measures. Moreover, for every compact set $K\subset\O$ we have
\begin{equation}\label{a1a2a3a4a5a6a7s8hhjhjjhjjjjjjkkkkyiuouilokkkmnhhfghfhghgjkkkkkhghghffhfghghkkjjjkk}
\mu(K)\geq\liminf_{n\to +\infty}\frac{1}{\e_n}\int_{K}
F\Big(\,\e_n\nabla\big\{\vec A\cdot\nabla v_n\big\}(x),\, \{\vec
A\cdot\nabla v_n\}(x),\,x\Big)dx\,.
\end{equation}
Next by the Theorem about $k$-dimensinal densities (Theorem 2.56 in
\cite{amb}) we have
\begin{equation}\label{a1a2a3a4a5a6a7s8hhjhjjhjjjjjjkkkkyiuouilokkkmnhhfghfhghgjkkkkkhghghffhjkljlkkkkk}
\mu(D)\geq\int_D\sigma(x)\,d\mathcal{H}^{N-1}(x)\,,
\end{equation}
where
\begin{equation}\label{a1a2a3a4a5a6a7s8hhjhjjhjjjjjjkkkkyiuouilokkkmnhhfghfhghgjkkkkkhghghffhjkljlkkkkkilljkkkk}
\sigma(x):=\limsup_{\rho\to
0^+}\frac{\mu(B_\rho(x))}{\omega_{N-1}\rho^{N-1}}\,,
\end{equation}
with $\omega_{N-1}$ denoting the $\mathcal{L}^{N-1}$-measure of
$(N-1)$-dimensional unit ball. Fix now $\delta>1$. Then by
\er{a1a2a3a4a5a6a7s8hhjhjjhjjjjjjkkkkyiuouilokkkmnhhfghfhghgjkkkkkhghghffhfghghkkjjjkk},
for every $x\in\O$ and every $\rho>0$ sufficiently small we have
\begin{equation}\label{a1a2a3a4a5a6a7s8hhjhjjhjjjjjjkkkkyiuouilokkkmnhhfghfhghgjkkkkkhghghffhfghghkkjjjkkhjgjghghghkkkkk}
\mu\big(B_{(\delta\rho)}(x)\big)\geq\mu\big(\ov
B_\rho(x)\big)\geq\liminf_{n\to +\infty}\frac{1}{\e_n}\int_{\ov
B_\rho(x)} F\Big(\,\e_n\nabla\big\{\vec A\cdot\nabla v_n\big\}(y),\,
\{\vec A\cdot\nabla v_n\}(y),\,y\Big)dy\,.
\end{equation}
On the other hand by
\er{a1a2a3a4a5a6a7s8hhjhjjhjjjjjjkkkkyiuouilokkkmnhhfghfhghgjkkkkk},
\er{a1a2a3a4a5a6a7s8hhjhjjhjjjjjjkkkkyiuouilokkkmnhhfghfhghgjkkkkkhghghffhjkljlkkkkk}
and
\er{a1a2a3a4a5a6a7s8hhjhjjhjjjjjjkkkkyiuouilokkkmnhhfghfhghgjkkkkkhghghffhjkljlkkkkkilljkkkk}
we obtain
\begin{equation}\label{a1a2a3a4a5a6a7s8hhjhjjhjjjjjjkkkkyiuouilokkkmnhhfghfhghgjkkkkkhghghffhjkljlkkkkkiuok}
T_0\geq\mu(\O)\geq\mu(D)\geq\int_D\sigma(x)\,\mathcal{H}^{N-1}(x)=\int_D\bigg\{\limsup_{\rho\to
0^+}\frac{\mu(B_{(\delta\rho)}(x))}{\omega_{N-1}(\delta\rho)^{N-1}}\bigg\}\,d\mathcal{H}^{N-1}(x)\,.
\end{equation}
Thus plugging
\er{a1a2a3a4a5a6a7s8hhjhjjhjjjjjjkkkkyiuouilokkkmnhhfghfhghgjkkkkkhghghffhfghghkkjjjkkhjgjghghghkkkkk}
into
\er{a1a2a3a4a5a6a7s8hhjhjjhjjjjjjkkkkyiuouilokkkmnhhfghfhghgjkkkkkhghghffhjkljlkkkkkiuok}
we deduce
\begin{multline}\label{a1a2a3a4a5a6a7s8hhjhjjhjjjjjjkkkkyiuouilokkkmnhhfghfhghgjkkkkkhghghffhjkljlkkkkkiuokhjjkkkk}
T_0\geq\mu(\O)\geq \\
\frac{1}{\delta^{N-1}}\int_D\Bigg\{\limsup_{\rho\to
0^+}\Bigg(\frac{1}{\omega_{N-1}\rho^{N-1}}\liminf_{n\to
+\infty}\frac{1}{\e_n}\int_{B_\rho(x)} F\Big(\,\e_n\nabla\big\{\vec
A\cdot\nabla v_n\big\}(y),\, \{\vec A\cdot\nabla
v_n\}(y),\,y\Big)dy\Bigg)\Bigg\}\,d\mathcal{H}^{N-1}(x)\,.
\end{multline}
Therefore, since $\delta>1$ was chosen arbitrary we deduce
\begin{multline}\label{a1a2a3a4a5a6a7s8hhjhjjhjjjjjjkkkkyiuouilokkkmnhhfghfhghgjkkkkkhghghffhjkljlkkkkkiuokhjjkkkkjhjhhjhh}
T_0\geq\mu(\O)\geq \\
\int_D\Bigg\{\limsup_{\rho\to
0^+}\Bigg(\frac{1}{\omega_{N-1}\rho^{N-1}}\liminf_{n\to
+\infty}\frac{1}{\e_n}\int_{B_\rho(x)} F\Big(\,\e_n\nabla\big\{\vec
A\cdot\nabla v_n\big\}(y),\, \{\vec A\cdot\nabla
v_n\}(y),\,y\Big)dy\Bigg)\Bigg\}\,d\mathcal{H}^{N-1}(x)\,.
\end{multline}
Next set
\begin{equation}\label{a1a2a3a4a5a6a7s8hhjhjjhjjjjjjkkkkyiuouilokkkmnhhfghfhghgjkkkkkhghghffhjkljlkkkkkilljkkkipkppppp}
\varphi_{n,\rho,x}(z):=\frac{1}{\rho}v_n(x+\rho
z)\quad\text{and}\quad\varphi_{\rho,x}(z):=\frac{1}{\rho}v(x+\rho
z)\,.
\end{equation}
Then changing variables $y=x+\rho z$ in the interior integration in
\er{a1a2a3a4a5a6a7s8hhjhjjhjjjjjjkkkkyiuouilokkkmnhhfghfhghgjkkkkkhghghffhjkljlkkkkkiuokhjjkkkkjhjhhjhh}
we infer
\begin{multline}\label{a1a2a3a4a5a6a7s8hhjhjjhjjjjjjkkkkyiuouilokkkmnhhfghfhghgjkkkkkhghghffhjkljlkkkkkiuokhjjkkkkjhjhhjhhlllkkkkkkuku}
T_0\geq\mu(\O)\geq \int_D\Bigg\{\\ \limsup_{\rho\to
0^+}\Bigg(\frac{1}{\omega_{N-1}}\liminf_{n\to
+\infty}\frac{1}{(\e_n/\rho)}\int_{B_1(0)}
F\Big(\,(\e_n/\rho)\nabla\big\{\vec A\cdot\nabla
\varphi_{n,\rho,x}\big\}(z),\, \{\vec A\cdot\nabla
\varphi_{n,\rho,x}\}(z),\,x+\rho z\Big)dz\Bigg)
\Bigg\}\,d\mathcal{H}^{N-1}(x)\,.
\end{multline}
However, by \er{vcjhfjhgjkg} for every $x\in D$ and every $\tau>0$
we obtain
\begin{multline}\label{gjdjgdjghdfjghfjhkfg}
\limsup_{\rho\to 0^+}\Bigg(\frac{1}{\omega_{N-1}}\liminf_{n\to
+\infty}\frac{1}{(\e_n/\rho)}\int_{B_1(0)}
F\Big(\,(\e_n/\rho)\nabla\big\{\vec A\cdot\nabla
\varphi_{n,\rho,x}\big\}(z),\, \{\vec A\cdot\nabla
\varphi_{n,\rho,x}\}(z),\,x+\rho z\Big)dz\Bigg)\geq\\
\limsup_{\rho\to 0^+}\Bigg(\frac{1}{\omega_{N-1}}\liminf_{n\to
+\infty}\frac{1}{(\e_n/\rho)}\int_{B_1(0)}
(1-\tau)F\Big(\,(\e_n/\rho)\nabla\big\{\vec A\cdot\nabla
\varphi_{n,\rho,x}\big\}(z),\, \{\vec A\cdot\nabla
\varphi_{n,\rho,x}\}(z),\,x\Big)dz\Bigg)\,.
\end{multline}
Thus since $\tau>0$ is arbitrary
\begin{multline}\label{gjdjgdjghdfjghfjhkfgjjkgjk}
\limsup_{\rho\to 0^+}\Bigg(\frac{1}{\omega_{N-1}}\liminf_{n\to
+\infty}\frac{1}{(\e_n/\rho)}\int_{B_1(0)}
F\Big(\,(\e_n/\rho)\nabla\big\{\vec A\cdot\nabla
\varphi_{n,\rho,x}\big\}(z),\, \{\vec A\cdot\nabla
\varphi_{n,\rho,x}\}(z),\,x+\rho z\Big)dz\Bigg)\geq\\
\limsup_{\rho\to 0^+}\Bigg(\frac{1}{\omega_{N-1}}\liminf_{n\to
+\infty}\frac{1}{(\e_n/\rho)}\int_{B_1(0)}
F\Big(\,(\e_n/\rho)\nabla\big\{\vec A\cdot\nabla
\varphi_{n,\rho,x}\big\}(z),\, \{\vec A\cdot\nabla
\varphi_{n,\rho,x}\}(z),\,x\Big)dz\Bigg)\,.
\end{multline}
Plugging \er{gjdjgdjghdfjghfjhkfgjjkgjk} into
\er{a1a2a3a4a5a6a7s8hhjhjjhjjjjjjkkkkyiuouilokkkmnhhfghfhghgjkkkkkhghghffhjkljlkkkkkiuokhjjkkkkjhjhhjhhlllkkkkkkuku}
we deduce
\begin{multline}\label{a1a2a3a4a5a6a7s8hhjhjjhjjjjjjkkkkyiuouilokkkmnhhfghfhghgjkkkkkhghghffhjkljlkkkkkiuokhjjkkkkjhjhhjhhlllkkkkk}
T_0\geq\mu(\O)\geq \int_D\Bigg\{\\ \limsup_{\rho\to
0^+}\Bigg(\frac{1}{\omega_{N-1}}\liminf_{n\to
+\infty}\frac{1}{(\e_n/\rho)}\int_{B_1(0)}
F\Big(\,(\e_n/\rho)\nabla\big\{\vec A\cdot\nabla
\varphi_{n,\rho,x}\big\}(z),\, \{\vec A\cdot\nabla
\varphi_{n,\rho,x}\}(z),\,x\Big)dz\Bigg)
\Bigg\}\,d\mathcal{H}^{N-1}(x)\,.
\end{multline}
Furthermore, for every $x\in D$ for every small $\rho>0$ we have
\begin{equation}\label{a1a2a3a4a5a6a7s8hhjhjjhjjjjjjkkkkyiuouilokkkmnhhfghfhghgjkkkkkhghghffhjkljlkkkkkilljkkkipkpppppfbhfjkgjgkkkhghghfhfgggg}
\vec A\cdot\nabla \varphi_{n,\rho,x}\to \vec A\cdot\nabla
\varphi_{\rho,x}\quad{as}\;n\to +\infty\quad\text{in}\quad
L^q\big(B_1(0),\R^m\big)\,.
\end{equation}
On the other hand by \er{L2009surfhh8128odno888jjjjjkkkkkk} for
every $x\in D$ we have
\begin{equation}\label{a1a2a3a4a5a6a7s8hhjhjjhjjjjjjkkkkyiuouilokkkmnhhfghfhghgjkkkkkhghghffhjkljlkkkkkilljkkkipkpppppfbhfjkgjgkkkhghghfhfgggglokjjjjhhh}
\{\vec A\cdot\nabla \varphi_{\rho,x}\}(z)\to \xi\Big(z,\{\vec
A\cdot\nabla v\}^+(x),\{\vec A\cdot\nabla v\}^-(x),\vec
n(x)\Big)\quad{as}\;\rho\to 0^+\quad\text{in}\quad
L^q\big(B_1(0),\R^m\big)\,.
\end{equation}
Thus, for every $x\in D$, we can extract appropriate diagonal
subsequences of $\{\varphi_{n,\rho,x}\}_{\{n,\rho\}}$, and
$\{\e_n/\rho\}_{\{n,\rho\}}$ which we denote by
$\{\vartheta_{j}\}_{j=1}^{+\infty}$, and $\{\e'_j\}_{j=1}^{+\infty}$
respectively, so that $\e'_j\to 0^+$ as $j\to +\infty$,
$\vartheta_{j}(z)\to \xi\Big(z,\{\vec A\cdot\nabla v\}^+(x),\{\vec
A\cdot\nabla v\}^-(x),\vec n(x)\Big)$ in $L^q\big(B_1(0),\R^m\big)$
as $j\to +\infty$, and
\begin{multline}\label{a1a2a3a4a5a6a7s8hhjhjjhjjjjjjkkkkyiuouilokkkmnhhfghfhghgjkkkkkhghghffhjkljlkkkkkiuokhjjkkkkjhjhhjhhlllkkkkkffgfgfgffgfg}
\limsup_{\rho\to 0^+}\Bigg(\frac{1}{\omega_{N-1}}\liminf_{n\to
+\infty}\frac{1}{(\e_n/\rho)}\int_{B_1(0)}
F\Big(\,(\e_n/\rho)\nabla\big\{\vec A\cdot\nabla
\varphi_{n,\rho,x}\big\}(z),\, \{\vec A\cdot\nabla
\varphi_{n,\rho,x}\}(z),\,x\Big)dz\Bigg)\\ \geq
\frac{1}{\omega_{N-1}}\liminf_{j\to
+\infty}\frac{1}{\e'_j}\int_{B_1(0)} F\Big(\,\e'_j\nabla\big\{\vec
A\cdot\nabla \vartheta_{j}\big\}(z),\, \{\vec A\cdot\nabla
\vartheta_{j}\}(z),\,x\Big)dz\,.
\end{multline}
Plugging this fact into
\er{a1a2a3a4a5a6a7s8hhjhjjhjjjjjjkkkkyiuouilokkkmnhhfghfhghgjkkkkkhghghffhjkljlkkkkkiuokhjjkkkkjhjhhjhhlllkkkkk}
we obtain
\begin{multline}\label{a1a2a3a4a5a6a7s8hhjhjjhjjjjjjkkkkjjjkjkkkkkk}
\liminf_{\e\to 0^+}\frac{1}{\e}\int_\O F\Big(\,\e\nabla\big\{\vec
A\cdot\nabla v_\e(x)\big\},\, \{\vec A\cdot\nabla v_\e\}(x),\,x\Big)dx=T_0\geq\\
\int_{D}E_1\Big(\{\vec A\cdot\nabla v\}^+(x),\{\vec A\cdot\nabla
v\}^-(x),\vec n(x),x\Big)d \mathcal H^{N-1}(x)\,,
\end{multline}
where for every $a,b\in\R^m$ and any unit vector $\vec \nu\in\R^N$
\begin{multline}\label{a1a2a3a4a5a6a7s8hhjhjjhjjjjjjkkkkjgjgjgjhlllllkkkggghhhjjjkkkk}
E_1\big(a,b,\vec \nu,x\big):=\inf\Bigg\{\liminf_{\e\to
0^+}\frac{1}{\omega_{N-1}\e}\int_{B_1(0)} F\Big(\,\e\nabla\big\{\vec
A\cdot\nabla \varphi_\e(y)\big\},\, \{\vec A\cdot\nabla
\varphi_\e\}(y),\,x\Big)dy:\;\; \varphi_\e\in
\mathcal{D}'\big(B_1(0),\R^d\big)
\quad\text{s.t.}\\ \vec A\cdot\nabla \varphi_\e\in
L^q\big(B_1(0),\R^m\big)\cap W^{1,p}\big(B_1(0),\R^m\big),\;\;\vec
A\cdot\nabla \varphi_\e\in\mathcal{M}\;\text{a.e. in}\;B_1(0)\\
\text{and}\;\;\{\vec A\cdot\nabla \varphi_\e\}(y)\to \xi(y,a,b,\vec
\nu)\;\text{in}\; L^q\big(B_1(0),\R^m\big)\Bigg\}\,.
\end{multline}
So it is sufficient to prove that for every $a,b\in\R^m$ and any
unit vector $\vec \nu\in\R^N$ we have
\begin{equation}\label{dhfhdghfhfgkkkhhh7788jjkk}
E_1\big(a,b,\vec \nu,x\big)\geq E_0\big(a,b,\vec \nu,x\big)\,,
\end{equation}
where $E_0\big(a,b,\vec \nu,x\big)$ is defined by
\er{a1a2a3a4a5a6a7s8hhjhjjhjjjjjjkkkkjgjgjgjhlllllkkk}. Without loss
of generality it is sufficient to prove
\er{dhfhdghfhfgkkkhhh7788jjkk} in the particular case where
$\vec\nu=\vec e_1$ and $I_{\vec \nu}:=\{y\in\R^N:\;|y\cdot \vec
e_j|<1/2\;\;\;\forall j=1,2\ldots N\}$ where $\{\vec e_1,\vec
e_2,\ldots,\vec e_N\}\subset\R^N$ is the standard orthonormal base
in $\R^N$. Choose a natural number $n\in\mathbb{N}$. Then changing
variables of integration $z=ny$ in
\er{a1a2a3a4a5a6a7s8hhjhjjhjjjjjjkkkkjgjgjgjhlllllkkkggghhhjjjkkkk}
we obtain
\begin{multline}\label{a1a2a3a4a5a6a7s8hhjhjjhjjjjjjkkkkjgjgjgjhlllllkkkggghhhjjjkkkkkkkkkkllllkkkk}
E_1\big(a,b,\vec \nu,x\big)=\inf\Bigg\{\liminf_{\e\to
0^+}\frac{1}{\omega_{N-1}n^{N-1}\e}\int\limits_{B_n(0)}
F\Big(\,\e\nabla\big\{\vec A\cdot\nabla \varphi_\e(y)\big\},\,
\{\vec A\cdot\nabla \varphi_\e\}(y),\,x\Big)dy:\; \varphi_\e\in
\mathcal{D}'\big(B_n(0),\R^d\big)\quad
\text{s.t.}\\ \vec A\cdot\nabla \varphi_\e\in
L^q\big(B_n(0),\R^m\big)\cap W^{1,p}\big(B_n(0),\R^m\big),\;\;
\vec A\cdot\nabla \varphi_\e\in\mathcal{M}\;\text{a.e. in}\;B_n(0)\\
\text{and}\;\{\vec A\cdot\nabla \varphi_\e\}(y)\to \xi(y,a,b,\vec
\nu)\;\text{in}\; L^q\big(B_n(0),\R^m\big)\Bigg\}\,.
\end{multline}
Next for every integers $i_1,i_2,\ldots,i_{N-1}\in\mathbb{Z}$
consider the set
\begin{equation}\label{a1a2a3a4a5a6a7s8hhjhjjhjjjjjjkkkkyiuouilokkkmnhhfghfhghgjkkkkkhghghffhjkljlkkkkkilljkkkipkpppppfbhfjk}
I_{(i_1,i_2,\ldots,i_{N-1})}:=\bigg\{z\in\R^N:\;|z\cdot\vec
e_1|<1/2\;\;\text{and}\;\;\big|z\cdot \vec
e_j-i_{j-1}\big|<1/2\;\;\;\forall j=2,3,\ldots, N\bigg\}.
\end{equation}
and set $I_0:=I_{(0,0,\ldots,0)}$. Then by
\er{a1a2a3a4a5a6a7s8hhjhjjhjjjjjjkkkkjgjgjgjhlllllkkkggghhhjjjkkkkkkkkkkllllkkkk}
\begin{multline}\label{a1a2a3a4a5a6a7s8hhjhjjhjjjjjjkkkkjgjgjgjhlllllkkkggghhhjjjkkkkkkkkkkllllkkkkkkkkklklklklkllkklkkkkkjjkkjjkjkjkjk}
E_1\big(a,b,\vec e_1,x\big)\geq
\frac{1}{\omega_{N-1}n^{N-1}}\,Card\bigg(\Big\{(i_1,i_2,\ldots,i_{N-1})\in\mathbb{Z}^{N-1}:\,I_{(i_1,i_2,\ldots,i_{N-1})}\in
B_n(0)\Big\}\bigg)\times\\ \times\inf\Bigg\{\liminf_{\e\to
0^+}\frac{1}{\e}\int_{I_0} F\Big(\,\e\nabla\big\{\vec A\cdot\nabla
\varphi_\e(y)\big\},\, \{\vec A\cdot\nabla
\varphi_\e\}(y),\,x\Big)dy:\;\; \varphi_\e\in
\mathcal{D}'(I_0,\R^d)\;\;
\text{s.t.}\\
\;\vec A\cdot\nabla \varphi_\e\in L^q\big(I_0,\R^m\big)\cap
W^{1,p}\big(I_0,\R^m\big),\;\;\vec A\cdot\nabla
\varphi_\e\in\mathcal{M}\;\text{a.e. in}\;I_0 \;\;
\text{and}\;\{\vec A\cdot\nabla \varphi_\e\}(y)\to \xi(y,a,b,\vec
\nu)\;\text{in}\;
L^q\big(I_0,\R^m\big)\Bigg\}\\
=\frac{1}{\omega_{N-1}n^{N-1}}\,Card\bigg(\Big\{(i_1,i_2,\ldots,i_{N-1})\in\mathbb{Z}^{N-1}:\,I_{(i_1,i_2,\ldots,i_{N-1})}\in
B_n(0)\Big\}\bigg)\,E_0(a,b,\vec e_1,x)\,.
\end{multline}
On the other hand clearly
\begin{equation}\label{a1a2a3a4a5a6a7s8hhjhjjhjjjjjjkkkkyiuouilokkkmnhhfghfhghgjkkkkkhghghffhjkljlkkkkkilljkkkipkpppppfbhfjkgjgkkk}
\lim\limits_{n\to
+\infty}\frac{1}{\omega_{N-1}n^{N-1}}\,Card\bigg(\Big\{(i_1,i_2,\ldots,i_{N-1})\in\mathbb{Z}^{N-1}:\,I_{(i_1,i_2,\ldots,i_{N-1})}\in
B_n(0)\Big\}\bigg)=1\,.
\end{equation}
Therefore, since $n\in\mathbb{N}$ was chosen arbitrary we deduce
\er{dhfhdghfhfgkkkhhh7788jjkk}. Plugging it into
\er{a1a2a3a4a5a6a7s8hhjhjjhjjjjjjkkkkjjjkjkkkkkk} completes the
proof.
\end{proof}
By the same method we can prove the following more general Theorem.
\begin{theorem}\label{dehgfrygfrgygen}
Let $\mathcal{M}$ be a subset of $\R^m$, $\O\subset\R^N$ be an open
set and $D\subset\O$ be a $\mathcal{H}^{N-1}$ $\sigma$-finite Borel
set. Consider $F\in C\big(\R^{m\times N^n}\times\R^{m\times
N^{(n-1)}}\times\ldots\times\R^{m\times N}\times
\R^m\times\R^N,\R\big)$, which satisfies $F\geq 0$ and the following
property: For every $x_0\in\O$ and every $\tau>0$ there exists
$\alpha>0$ satisfying
\begin{multline}\label{vcjhfjhgjkgkgjgghj}
F\big(a_1,a_2,\ldots, a_n,b,x\big)-F\big(a_1,a_2,\ldots,
a_n,b,x_0\big)\geq -\tau F\big(a_1,a_2,\ldots, a_n,b,x_0\big)\\
\forall\, a_1\in \R^{m\times N^n}\;\forall\,a_2\in \R^{m\times
N^{n-1}}\ldots\forall\, a_n\in\R^{m\times N}\;\forall\,
b\in\R^m\;\forall\, x\in\R^N\;\;\text{such
that}\;\;|x-x_0|<\alpha\,.
\end{multline}
Furthermore, let $\vec A\in \mathcal{L}(\R^{d\times N};\R^m)$,
$q=(q_1,q_2,\ldots, q_m)\in\R^m$, $p\geq 1$ and
$v\in\mathcal{D}'(\O,\R^d)$ be such that $q_j\geq 1$,
$\vec A\cdot\nabla v\in L^q_{loc}(\O,\R^m)$ and
$$F\Big(0,0,\ldots,0,\{\vec A\cdot\nabla
v\}(x),x\Big)=0\quad\quad\text{for a.e.}\;\,x\in\O\,.$$ Assume also
that there exist three Borel mappings $\{\vec A\cdot\nabla
v\}^+(x):D\to\R^m$, $\{\vec A\cdot\nabla v\}^-(x):D\to\R^m$ and
$\vec n(x):D\to S^{N-1}$ such that for every $x\in D$ we have
\begin{multline}\label{L2009surfhh8128odno888jjjjjkkkkkkgen}
\lim\limits_{\rho\to 0^+}\frac{\int_{B_\rho^+(x,\vec
n(x))}\big|\{\vec A\cdot\nabla v\}(y)-\{\vec A\cdot\nabla
v\}^+(x)\big|^q\,dy} {\mathcal{L}^N\big(B_\rho(x)\big)}=0,\\
\lim\limits_{\rho\to 0^+}\frac{\int_{B_\rho^-(x,\vec
n(x))}\big|\{\vec A\cdot\nabla v\}(y)-\{\vec A\cdot\nabla
v\}^-(x)\big|^q\,dy}
{\mathcal{L}^N\big(B_\rho(x)\big)}=0\quad\quad\quad\quad\text{(see
Definition \ref{gdhgvdgjkdfgjkhdd})}.
\end{multline}
Then for every
$\{v_\e\}_{\e>0}\subset\mathcal{D}'(\O,\R^d)$, satisfying $\vec
A\cdot\nabla v_\e\in L^q_{loc}(\O,\R^m)\cap W^{n,p}_{loc}(\O,\R^m)$,
$\{\vec A\cdot\nabla v_\e\}(x)\in\mathcal{M}$ for a.e. $x\in\O$ and
$\vec A\cdot\nabla v_\e\to \vec A\cdot\nabla v$ in
$L^q_{loc}(\O,\R^m)$ as $\e\to 0^+$, we have
\begin{multline}\label{a1a2a3a4a5a6a7s8hhjhjjhjjjjjjkkkkgen}
\liminf_{\e\to 0^+}\frac{1}{\e}\int_\O
F\bigg(\,\e^n\nabla^n\big\{\vec A\cdot\nabla
v_\e\big\}(x),\,\e^{n-1}\nabla^{n-1}\big\{\vec A\cdot\nabla
v_\e\big\}(x),\,\ldots,\,\e\nabla\big\{\vec
A\cdot\nabla v_\e(x)\big\},\, \{\vec A\cdot\nabla v_\e\}(x),\,x\bigg)dx\\
\geq \int_{D}E^{(n)}_0\Big(\{\vec A\cdot\nabla v\}^+(x),\{\vec
A\cdot\nabla v\}^-(x),\vec n(x),x\Big)d \mathcal H^{N-1}(x)\,,
\end{multline}
where for every $a,b\in\R^m$ and any unit vector $\vec \nu\in\R^N$
\begin{multline}\label{a1a2a3a4a5a6a7s8hhjhjjhjjjjjjkkkkjgjgjgjhlllllkkkgen}
E^{(n)}_0\big(a,b,\vec \nu,x\big):=\\ \inf\Bigg\{\liminf_{\e\to
0^+}\frac{1}{\e}\int_{I_{\vec \nu}} F\bigg(\,\e^n\nabla^n\big\{\vec
A\cdot\nabla \varphi_\e\big\}(y),\,\e^{n-1}\nabla^{n-1}\big\{\vec
A\cdot\nabla \varphi_\e\big\}(y),\,\ldots,\,\e\nabla\big\{\vec
A\cdot\nabla \varphi_\e\big\}(y),\, \{\vec A\cdot\nabla
\varphi_\e\}(y),\,x\bigg)dy:\\ \varphi_\e\in\mathcal{D}'(I_{\vec
\nu},\R^d)
\;\; \text{s.t.}\;\vec A\cdot\nabla \varphi_\e\in L^q(I_{\vec
\nu},\R^m)\cap W^{n,p}(I_{\vec \nu},\R^m),\;\;\vec A\cdot\nabla
\varphi_\e\in\mathcal{M}\;\text{a.e. in}\;I_{\vec \nu} \\
\text{and}\;\{\vec A\cdot\nabla \varphi_\e\}(y)\to \xi(y,a,b,\vec
\nu)\;\text{in}\; L^q(I_{\vec \nu},\R^m)\Bigg\}\,.
\end{multline}
Here $I_{\vec \nu}:=\{y\in\R^N:\;|y\cdot \vec\nu_j|<1/2\;\;\;\forall
j=1,2\ldots N\}$ where
$\{\vec\nu_1,\vec\nu_2,\ldots,\vec\nu_N\}\subset\R^N$ is an
orthonormal base in $\R^N$ such that $\vec\nu_1:=\vec \nu$ and
\begin{equation}\label{fhyffgfgfgfffgfgen}
\xi(y,a,b,\vec \nu):=\begin{cases}a\quad\text{if}\;y\cdot\vec
\nu>0\,,\\ b\quad\text{if}\;y\cdot\vec \nu<0\,.\end{cases}
\end{equation}
\end{theorem}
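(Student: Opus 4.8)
The plan is to follow, essentially verbatim, the argument given for Theorem~\ref{dehgfrygfrgy}, the only structural change being that the single scaled gradient $\e\nabla\{\vec A\cdot\nabla v_\e\}$ gets replaced throughout by the full tuple $\big(\e^n\nabla^n\{\vec A\cdot\nabla v_\e\},\,\e^{n-1}\nabla^{n-1}\{\vec A\cdot\nabla v_\e\},\,\ldots,\,\e\nabla\{\vec A\cdot\nabla v_\e\}\big)$ of scaled higher derivatives. What makes this go through with no new ideas is that the blow-up rescaling $\varphi_{n,\rho,x}(z):=\rho^{-1}v_n(x+\rho z)$, $\varphi_{\rho,x}(z):=\rho^{-1}v(x+\rho z)$ of \er{a1a2a3a4a5a6a7s8hhjhjjhjjjjjjkkkkyiuouilokkkmnhhfghfhghgjkkkkkhghghffhjkljlkkkkkilljkkkipkppppp} satisfies $\{\vec A\cdot\nabla\varphi_{n,\rho,x}\}(z)=\{\vec A\cdot\nabla v_n\}(x+\rho z)$, hence $\nabla^j_z\{\vec A\cdot\nabla\varphi_{n,\rho,x}\}(z)=\rho^{\,j}\,\big(\nabla^j\{\vec A\cdot\nabla v_n\}\big)(x+\rho z)$ for every $j=1,\ldots,n$. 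Therefore, after the change of variables $y=x+\rho z$, each factor $\e_n^{\,j}\big(\nabla^j\{\vec A\cdot\nabla v_n\}\big)(y)$ turns exactly into $(\e_n/\rho)^{\,j}\nabla^j_z\{\vec A\cdot\nabla\varphi_{n,\rho,x}\}(z)$, while the prefactor $1/\e_n$ together with the Jacobian $\rho^N$ produces $\rho^{N-1}/(\e_n/\rho)$: the homogeneous $\e^{\,j}\nabla^j$ structure of the energy is preserved under blow-up, so every identity in the proof of Theorem~\ref{dehgfrygfrgy} carries over once the extra gradient slots are inserted.

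Carrying this out: assuming $T_0:=\liminf_{\e\to 0^+}\tfrac1\e\int_\O F\big(\e^n\nabla^n\{\vec A\cdot\nabla v_\e\},\ldots,\{\vec A\cdot\nabla v_\e\},x\big)\,dx<+\infty$ (else nothing to prove), pass to a subsequence $\e_n\to 0^+$ along which the nonnegative energy densities converge weak$^\ast$ to a finite positive Radon measure $\mu$ on $\O$ with $\mu(\O)\le T_0$ and $\mu(K)\ge\liminf_n\tfrac1{\e_n}\int_K F(\ldots)\,dy$ for every compact $K\subset\O$; apply the $(N-1)$-dimensional density theorem (Theorem 2.56 in \cite{amb}) as in the proof of Theorem~\ref{dehgfrygfrgy} to get $\mu(D)\ge\int_D\big(\limsup_{\rho\to 0^+}\mu(B_\rho(x))/(\omega_{N-1}\rho^{N-1})\big)\,d\mathcal H^{N-1}(x)$; then perform the blow-up change of variables above and use hypothesis \er{vcjhfjhgjkgkgjgghj} (for fixed $x\in D$, with $\tau\downarrow 0$) to discard the dependence on the last slot exactly as in \er{gjdjgdjghdfjghfjhkfgjjkgjk}. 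This yields
\begin{equation*}
T_0\ \ge\ \int_D\limsup_{\rho\to 0^+}\bigg(\frac1{\omega_{N-1}}\liminf_{n\to+\infty}\frac1{(\e_n/\rho)}\int_{B_1(0)}F\big((\e_n/\rho)^n\nabla^n\{\vec A\cdot\nabla\varphi_{n,\rho,x}\},\ldots,\{\vec A\cdot\nabla\varphi_{n,\rho,x}\},x\big)\,dz\bigg)\,d\mathcal H^{N-1}(x).
\end{equation*}
For each fixed $x\in D$ one has $\vec A\cdot\nabla\varphi_{n,\rho,x}\to\vec A\cdot\nabla\varphi_{\rho,x}$ in $L^q(B_1(0),\R^m)$ as $n\to+\infty$ (from $\vec A\cdot\nabla v_n\to\vec A\cdot\nabla v$ in $L^q_{loc}$) and $\vec A\cdot\nabla\varphi_{\rho,x}\to\xi\big(\cdot,\{\vec A\cdot\nabla v\}^+(x),\{\vec A\cdot\nabla v\}^-(x),\vec n(x)\big)$ in $L^q(B_1(0),\R^m)$ as $\rho\to 0^+$ by the jump hypothesis \er{L2009surfhh8128odno888jjjjjkkkkkkgen}; moreover each $\vec A\cdot\nabla\varphi_{n,\rho,x}$ still lies in $L^q\cap W^{n,p}(B_1(0),\R^m)$ and is $\mathcal M$-valued a.e. A diagonal extraction then produces $\vartheta_j\in\mathcal D'(B_1(0),\R^d)$ and $\e'_j\to 0^+$ with the required admissibility, whence $T_0\ge\int_D E_1^{(n)}\big(\{\vec A\cdot\nabla v\}^+(x),\{\vec A\cdot\nabla v\}^-(x),\vec n(x),x\big)\,d\mathcal H^{N-1}(x)$, where $E_1^{(n)}$ is the analogue of $E^{(n)}_0$ with the unit ball $B_1(0)$ and normalization $1/\omega_{N-1}$ in place of the unit cube $I_{\vec\nu}$.

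The only step that is not a pure transcription of the proof of Theorem~\ref{dehgfrygfrgy} is the inequality $E_1^{(n)}(a,b,\vec\nu,x)\ge E^{(n)}_0(a,b,\vec\nu,x)$, handled exactly as its counterpart \er{dhfhdghfhfgkkkhhh7788jjkk}: reduce to $\vec\nu=\vec e_1$; rescale $z=ny$, which rewrites $E_1^{(n)}$ as an infimum over competitors on the dilated ball $B_n(0)$ with normalization $1/(\omega_{N-1}n^{N-1})$; tile $B_n(0)$ by the translated unit cubes $I_{(i_1,\ldots,i_{N-1})}$ of \er{a1a2a3a4a5a6a7s8hhjhjjhjjjjjjkkkkyiuouilokkkmnhhfghfhghgjkkkkkhghghffhjkljlkkkkkilljkkkipkpppppfbhfjk}; restrict any admissible competitor to each such cube (by additivity of the integral over disjoint cubes and minimization over cubes, this does not increase the averaged energy); and use $Card\{(i_1,\ldots,i_{N-1})\in\mathbb Z^{N-1}:\ I_{(i_1,\ldots,i_{N-1})}\subset B_n(0)\}/(\omega_{N-1}n^{N-1})\to 1$ as $n\to+\infty$ to pass from ball to cube. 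Substituting $E_1^{(n)}\ge E^{(n)}_0$ into the displayed inequality completes the argument. I do not expect a genuine obstacle here; the one thing that repays attention, beyond bookkeeping, is to verify that the admissibility constraints — $\{\vec A\cdot\nabla\varphi_\e\}\in L^q\cap W^{n,p}$, $\{\vec A\cdot\nabla\varphi_\e\}(\cdot)\in\mathcal M$ a.e., and the prescribed $L^q$-limit $\xi(\cdot,a,b,\vec\nu)$ — survive each operation used (rescaling $z=\rho^{-1}(y-x)$, restriction to $B_1(0)$, restriction to a subcube, diagonal extraction). This is routine since all of them are linear changes of variables or restrictions and the $W^{n,p}$- and $L^q$-contributions over disjoint subdomains add in the appropriate powers; the order $n$ enters only through the extra slots of $F$ and the extra powers $(\e_n/\rho)^{\,j}$, which behave exactly as the single power did in Theorem~\ref{dehgfrygfrgy}.
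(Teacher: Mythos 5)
Your proposal is correct and is essentially the paper's own argument: the paper gives no separate proof of Theorem~\ref{dehgfrygfrgygen}, asserting only that it follows ``by the same method'' as Theorem~\ref{dehgfrygfrgy}, and you carry out exactly that method, correctly identifying the homogeneity $\nabla^j_z\{\vec A\cdot\nabla\varphi_{n,\rho,x}\}(z)=\rho^{\,j}\big(\nabla^j\{\vec A\cdot\nabla v_n\}\big)(x+\rho z)$ as the reason the blow-up, the freezing of the $x$-slot via \eqref{vcjhfjhgjkgkgjgghj}, the diagonal extraction, and the cube-tiling step all transcribe verbatim to the higher-order setting.
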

We have the following particular case of Theorem
\ref{dehgfrygfrgygen}.
\begin{theorem}\label{dehgfrygfrgygenjklhhj}
Let $\mathcal{M}$ be a subset of $\R^m$, $\O\subset\R^N$ be an open
set and $D\subset\O$ be a $\mathcal{H}^{N-1}$ $\sigma$-finite Borel
set. Furthermore, let $q_1,q_2,q_3\geq 1$, $p\geq 1$ and let $F$ be
a continuous function defined on
$$
\big\{\R^{k\times N^{n+1}}\times\R^{d\times
N^{n+1}}\times\R^{m\times
N^n}\big\}\times\ldots\times\big\{\R^{k\times N\times
N}\times\R^{d\times N\times N}\times\R^{m\times
N}\big\}\times\big\{\R^{k\times N}\times\R^{d\times
N}\times\R^{m}\big\}\times\R^N,
$$
taking values in $\R$ and satisfying $F\geq 0$ and the following
property: For every $x_0\in\O$ and every $\tau>0$ there exists
$\alpha>0$ satisfying
\begin{multline}\label{vcjhfjhgjkgkgjgghjfhfhf}
F\big(a_1,a_2,\ldots, a_n,b,x\big)-F\big(a_1,a_2,\ldots,
a_n,b,x_0\big)\geq -\tau F\big(a_1,a_2,\ldots, a_n,b,x_0\big)\quad
\forall\, a_1\in \R^{k\times N^{n+1}}\times\R^{d\times
N^{n+1}}\times\R^{m\times N^n}\\ \ldots\forall\, a_n\in\R^{k\times
N\times N}\times\R^{d\times N\times N}\times\R^{m\times
N}\;\;\forall\, b\in\R^{k\times N}\times\R^{d\times
N}\times\R^{m}\;\;\forall\, x\in\R^N\;\;\text{such
that}\;\;|x-x_0|<\alpha\,.
\end{multline}
Let $v(x)\in W^{1,q_1}_{loc}(\O,\R^k)$, $\bar m(x)\in
L^{q_2}_{loc}(\O,\R^{d\times N})$ and $\varphi\in
L^{q_3}_{loc}(\O,\R^{m})$ be such that  $div_x \bar m(x)\equiv 0$ in
$\O$ and
$$F\Big(0,0,\ldots,0,\{\nabla v,\bar m,\varphi\},x\Big)=0
\quad\text{a.e.~in}\; \Omega\,.$$ Assume also that there exist Borel
mappings $\{\nabla v\}^+(x):D\to\R^{k\times N}$, $\{\nabla
v\}^-(x):D\to\R^{k\times N}$, $\bar m^+(x):D\to\R^{d\times N}$,
$\bar m^-(x):D\to\R^{d\times N}$, $\varphi^+(x):D\to\R^{m}$,
$\varphi^-(x):D\to\R^{m}$ and $\vec n(x):D\to S^{N-1}$ such that for
every $x\in D$ we have
\begin{multline}\label{L2009surfhh8128odno888jjjjjkkkkkkgenhjjhjkj}
\lim\limits_{\rho\to 0^+}\frac{1}
{\mathcal{L}^N\big(B_\rho(x)\big)}\int_{B_\rho^+(x,\vec
n(x))}\Bigg(\Big|\nabla v(y)-\{\nabla v\}^+(x)\Big|^{q_1}+\Big|\bar
m(y)-\bar m^+(x)\Big|^{q_2}+\Big|\varphi(y)-\varphi^+(x)\Big|^{q_3}\Bigg)\,dy=0\,,\\
\lim\limits_{\rho\to 0^+}\frac{1}
{\mathcal{L}^N\big(B_\rho(x)\big)}\int_{B_\rho^-(x,\vec
n(x))}\Bigg(\Big|\nabla v(y)-\{\nabla v\}^-(x)\Big|^{q_1}+\Big|\bar
m(y)-\bar
m^-(x)\Big|^{q_2}+\Big|\varphi(y)-\varphi^-(x)\Big|^{q_3}\Bigg)\,dy=0\,.
\end{multline}
Then for every
$\{v_\e\}_{\e>0}\subset W^{1,q_1}_{loc}(\O,\R^k)\cap
W^{(n+1),p}_{loc}(\O,\R^k)$, $\{m_\e\}_{\e>0}\subset
L^{q_2}_{loc}(\O,\R^{d\times N})\cap W^{n,p}_{loc}(\O,\R^{d\times
N})$ and $\{\psi_\e\}_{\e>0}\subset L^{q_3}_{loc}(\O,\R^{m})\cap
W^{n,p}_{loc}(\O,\R^m)$ satisfying $div_x m_\e(x)\equiv 0$ in $\O$,
$\psi_\e(x)\in\mathcal{M}$ for a.e. $x\in\O$, $v_\e\to v$ in
$W^{1,q_1}_{loc}(\O,\R^k)$ as $\e\to 0^+$, $m_\e\to \bar m$ in
$L^{q_2}_{loc}(\O,\R^{d\times N})$ as $\e\to 0^+$ and $\psi_\e\to
\varphi$ in $L^{q_3}_{loc}(\O,\R^{m})$, we have
\begin{multline}\label{a1a2a3a4a5a6a7s8hhjhjjhjjjjjjkkkkgenjbhghgh}
\liminf_{\e\to 0^+}\int\limits_{\O}\frac{1}{\e} F\bigg(
\big\{\e^n\nabla^{n+1}v_{\e},\,\e^n\nabla^n
m_\e,\,\e^n\nabla^n\psi_\e\big\},\,\ldots\,,\big\{\e\nabla^2v_{\e},\,\e\nabla
m_\e,\,\e\nabla\psi_\e\big\},\,\big\{\nabla
v_{\e},\,m_\e,\,\psi_\e\big\},\,x\bigg)\,dx\\
\geq \int_{D}\bar E^{(n)}_0\bigg(\{\nabla v\}^+(x),\bar
m^+(x),\varphi^+(x),\{\nabla v\}^-(x),\bar m^-(x),\varphi^-(x),\vec
n(x),x\bigg)d \mathcal H^{N-1}(x)\,,
\end{multline}
where for
any unit vector $\vec \nu\in\R^N$
\begin{multline}\label{a1a2a3a4a5a6a7s8hhjhjjhjjjjjjkkkkjgjgjgjhlllllkkkgenhgjkggg}
\bar E^{(n)}_0\bigg(\{\nabla v\}^+,\bar m^+,\varphi^+,\{\nabla v\}^-,\bar m^-,\varphi^-,\vec \nu,x\bigg):=\\
\inf\Bigg\{\liminf_{\e\to 0^+}\int_{I_{\vec \nu}}
\frac{1}{\e}F\bigg(\Big\{\e^n\nabla^{n+1}\sigma_\e(y),\e^n\nabla^n\theta_\e(y),
\e^n\nabla^n\gamma_\e(y)\Big\},\ldots,\Big\{\nabla\sigma_\e(y),\theta_\e(y),\gamma_\e(y)\Big\}
,x\bigg)dy:\\
\sigma_\e\in W^{1,q_1}(I_{\vec \nu},\R^k)\cap W^{(n+1),p}(I_{\vec
\nu},\R^k),\;\theta_\e\in L^{q_2}(I_{\vec \nu},\R^{d\times N})\cap
W^{n,p}(I_{\vec \nu},\R^{d\times N}),\\ \big\{\gamma_\e:I_{\vec
\nu}\to\mathcal{M}\big\}\in L^{q_3}(I_{\vec \nu},\R^{m})\cap
W^{n,p}(I_{\vec \nu},\R^m)\;\\ \text{s.t.}\;
\Div_y\theta_\e(y)\equiv
0,\;\nabla\sigma_\e(y)\to\sigma\big(y,\{\nabla v\}^+,\{\nabla
v\}^-,\vec\nu\big)\;\text{in}\;L^{q_1}(I_{\vec \nu},\R^{k\times N}),\\
\theta_\e(y)\to\theta(y,\bar m^+,\bar
m^-,\vec\nu)\;\text{in}\;L^{q_2}(I_{\vec \nu},\R^{d\times
N}),\;\gamma_\e(y)\to\gamma(y,\varphi^+,\varphi^-,\vec\nu)\;\text{in}\;L^{q_3}(I_{\vec
\nu},\R^{m})
\Bigg\}\,.
\end{multline}
Here $I_{\vec \nu}:=\{y\in\R^N:\;|y\cdot \vec\nu_j|<1/2\;\;\;\forall
j=1,2\ldots N\}$ where
$\{\vec\nu_1,\vec\nu_2,\ldots,\vec\nu_N\}\subset\R^N$ is an
orthonormal base in $\R^N$ such that $\vec\nu_1:=\vec \nu$ and
\begin{multline}\label{fhyffgfgfgfffgfgenkjgjgkgkg}
\sigma\big(y,\{\nabla v\}^+,\{\nabla
v\}^-,\vec\nu\big):=\begin{cases}\{\nabla
v\}^+\quad\text{if}\;\,y\cdot\vec \nu>0\,,\\
\{\nabla v\}^-\quad\text{if}\;\,y\cdot\vec
\nu<0\,,\end{cases}\quad\theta\big(y,\bar m^+,\bar
m^-,\vec\nu\big):=\begin{cases}\bar m^+\quad\text{if}\;\,y\cdot\vec \nu>0\,,\\
\bar m^-\quad\text{if}\;\,y\cdot\vec
\nu<0\,,\end{cases}\\ \text{and}\quad\gamma\big(y,\varphi^+,\varphi^-,\vec\nu\big):=\begin{cases}\varphi^+\quad\text{if}\;\,y\cdot\vec \nu>0\,,\\
\varphi^-\quad\text{if}\;\,y\cdot\vec \nu<0\,.\end{cases}
\end{multline}
\end{theorem}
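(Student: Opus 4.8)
For Theorem~\ref{dehgfrygfrgygenjklhhj} the plan is to derive the statement as a corollary of Theorem~\ref{dehgfrygfrgygen} by encoding the constrained triple $(\nabla v_\e,m_\e,\psi_\e)$ as a single field of the form $\vec A\cdot\nabla(\,\cdot\,)$. Put $m':=kN+dN+m$ and $d':=k+d\binom N2+m$, and identify $\R^{k\times N^{j+1}}\times\R^{d\times N^{j+1}}\times\R^{m\times N^{j}}$ with $\R^{m'\times N^{j}}$ for every $j$; under this identification $F$ in the present theorem is literally a function of the kind handled by Theorem~\ref{dehgfrygfrgygen}, acting on the combined field $W:=(\nabla v,\bar m,\varphi)\in\R^{m'}$ and its iterated gradients (indeed $\nabla^jW=(\nabla^{j+1}v,\nabla^j\bar m,\nabla^j\varphi)$). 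The algebraic point is that $W=\vec A\cdot\nabla w$ for a fixed $\vec A\in\mathcal L(\R^{d'\times N};\R^{m'})$: taking $w=(u,\Theta,\Psi)$, the first block uses the potential $u:=v$; the second uses that a divergence free matrix field is, on any simply connected set, a ``generalized curl'' $m_{rj}=\sum_k\partial_k\Theta_{rjk}$ of a tensor $\Theta$ antisymmetric in its last two indices (a linear function of $\nabla\Theta$, which is divergence free for \emph{every} such $\Theta$); the third uses that any $\R^m$--valued function is $\partial_{x_1}$ of a distributional primitive $\Psi$. Thus $\Div_x m\equiv0$ is automatically built into the range of $\vec A$, and one sets $\vec A\cdot\nabla w:=(\nabla u,\Div_x\Theta,\partial_{x_1}\Psi)$. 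Since the argument behind Theorem~\ref{dehgfrygfrgygen} is local around $\mathcal H^{N-1}$--a.e.\ point of $D$ (it blows up a concentration measure), it suffices to have the potential representation on a neighbourhood of each point of $D$, which is available because balls are simply connected; alternatively one first covers $\O$ by countably many simply connected open sets.

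Granting the encoding, I would carry out the routine transfer of hypotheses. With the vector exponent $q:=(q_1,\dots,q_1,q_2,\dots,q_2,q_3,\dots,q_3)\in\R^{m'}$ (see Definition~\ref{gdhgvdgjkdfgjkhdd}) and $\mathcal M':=\R^{kN}\times\R^{dN}\times\mathcal M$, the hypotheses on $v_\e,m_\e,\psi_\e$ become: the potentials $w_\e:=(v_\e,\Theta_\e,\Psi_\e)$ satisfy $\vec A\cdot\nabla w_\e\in L^q_{loc}(\O,\R^{m'})\cap W^{n,p}_{loc}(\O,\R^{m'})$, $\{\vec A\cdot\nabla w_\e\}(x)\in\mathcal M'$ a.e., and $\vec A\cdot\nabla w_\e\to\vec A\cdot\nabla w$ in $L^q_{loc}$ where $w:=(v,\Theta,\Psi)$; the hypothesis $F(0,\dots,0,\{\nabla v,\bar m,\varphi\},x)=0$ a.e.\ becomes $F(0,\dots,0,\{\vec A\cdot\nabla w\}(x),x)=0$ a.e.; the trace relations \er{L2009surfhh8128odno888jjjjjkkkkkkgenhjjhjkj} become \er{L2009surfhh8128odno888jjjjjkkkkkkgen} with $\{\vec A\cdot\nabla v\}^\pm(x)=(\{\nabla v\}^\pm(x),\bar m^\pm(x),\varphi^\pm(x))$; and, the $x$--slot being untouched, property \er{vcjhfjhgjkgkgjgghjfhfhf} for $F$ is verbatim property \er{vcjhfjhgjkgkgjgghj} (with the same $\alpha$) for $F$ viewed on $\R^{m'\times N^n}\times\dots\times\R^{m'}\times\R^N$. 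Then Theorem~\ref{dehgfrygfrgygen} applies to $\{w_\e\}$, and its conclusion \er{a1a2a3a4a5a6a7s8hhjhjjhjjjjjjkkkkgen} reads $\liminf_{\e\to0^+}\frac1\e\int_\O F\big(\e^n\nabla^n\{\vec A\cdot\nabla w_\e\},\dots,\{\vec A\cdot\nabla w_\e\},x\big)\,dx\ge\int_D E^{(n)}_0\big(\{\vec A\cdot\nabla v\}^+,\{\vec A\cdot\nabla v\}^-,\vec n(x),x\big)\,d\mathcal H^{N-1}(x)$; since $\e^j\nabla^j\{\vec A\cdot\nabla w_\e\}=\{\e^j\nabla^{j+1}v_\e,\e^j\nabla^j m_\e,\e^j\nabla^j\psi_\e\}$, the left--hand side is exactly that of \er{a1a2a3a4a5a6a7s8hhjhjjhjjjjjjkkkkgenjbhghgh}.

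The final, and only non-bookkeeping, step is to identify the abstract cell density $E^{(n)}_0$ (for this $F$ and $\vec A$) with $\bar E^{(n)}_0$. For $E^{(n)}_0\ge\bar E^{(n)}_0$: given a competitor $\varphi_\e\in\mathcal D'(I_{\vec\nu},\R^{d'})$ I would write $\varphi_\e=(\sigma_\e,\Theta_\e,\Psi_\e)$ and set $\theta_\e:=\Div_y\Theta_\e$, $\gamma_\e:=\partial_{y_1}\Psi_\e$; then $(\sigma_\e,\theta_\e,\gamma_\e)$ is admissible for $\bar E^{(n)}_0$ with the same energy, because $\vec A\cdot\nabla\varphi_\e\in\mathcal M'$ a.e.\ means $\gamma_\e\in\mathcal M$ a.e., $\Div_y\theta_\e\equiv0$ holds automatically, $\nabla\sigma_\e\in L^{q_1}\cap W^{n,p}$ forces $\sigma_\e\in W^{1,q_1}\cap W^{(n+1),p}$ modulo a constant (harmless, as $F$ does not see $v_\e$ itself), and $\{\vec A\cdot\nabla\varphi_\e\}\to\xi$ in $L^q$ unpacks into the three convergences of \er{a1a2a3a4a5a6a7s8hhjhjjhjjjjjjkkkkjgjgjgjhlllllkkkgenhgjkggg}. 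For the reverse inequality, given admissible $(\sigma_\e,\theta_\e,\gamma_\e)$ for $\bar E^{(n)}_0$ I would apply the Poincar\'e lemma on the (contractible) cube $I_{\vec\nu}$ --- this is precisely where $\Div_y\theta_\e\equiv0$ enters --- to produce a distributional $\Theta_\e$ with $\Div_y\Theta_\e=\theta_\e$, and a $y_1$--primitive $\Psi_\e$ of $\gamma_\e$, so that $\varphi_\e:=(\sigma_\e,\Theta_\e,\Psi_\e)$ is admissible for $E^{(n)}_0$ with the same value. Hence $E^{(n)}_0=\bar E^{(n)}_0$, and substituting this into the inequality of the previous paragraph proves \er{a1a2a3a4a5a6a7s8hhjhjjhjjjjjjkkkkgenjbhghgh}. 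I expect the main obstacle to be not the estimate --- which is inherited wholesale from Theorem~\ref{dehgfrygfrgygen} --- but precisely this reversibility of the operator encoding at the level of admissible competitors: the Poincar\'e--lemma construction of a potential for a prescribed divergence free field on the cube, together with the reduction of the global assertion to simply connected pieces.
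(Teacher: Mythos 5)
Your proposal is correct and follows the same overall strategy as the paper: encode the constrained triple as $\vec A\cdot\nabla$ of a potential and invoke Theorem \ref{dehgfrygfrgygen}. The only substantive difference is the choice of potential for the divergence-free component. You use the full antisymmetric tensor potential $\Theta$ with $m_{rj}=\sum_k\partial_k\Theta_{rjk}$, which requires the Poincar\'e lemma for $(N-1)$-forms (so you need contractibility, not merely simple connectedness -- your phrase ``on any simply connected set'' is off for $N\geq 3$, though harmless since you work on balls/cubes). The paper instead takes $\O$ to be a cube, writes $\bar m=(\bar m_1,\bar m')$, and sets $M(x):=-L(x')+\int_0^{x_1}\bar m'(s,x')\,ds$ where $\Div_{x'}L(x')=\bar m_1(0,x')$; then $\bar m=(-\Div_{x'}M,\partial_1 M)$, and the operator $M\mapsto(-\Div_{x'}M,\partial_1 M)$ has automatically divergence-free range -- the same key structural point as your $\Theta$-encoding, but obtained by elementary one-dimensional integration plus a divergence equation on an $(N-1)$-dimensional slice, with a smaller auxiliary target $\R^{d\times(N-1)}$. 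Your explicit discussion of the identification of the abstract cell density $E^{(n)}_0$ with $\bar E^{(n)}_0$ (both inclusions of competitor classes, with the inverse direction again resting on solvability of the potential equation on the unit cell) is a point the paper passes over silently, and it is resolved by exactly the same integration construction; so your flagging of it as the one non-bookkeeping step is accurate rather than a gap.
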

\begin{proof}
Without any loss of generality we may assume that
$\O=\big\{x=(x_1,x_2,\ldots, x_N)\in\R^N:\;|x_j|<c_0\;\forall
j\big\}$. for some $c_0>0$. Let $\{v_\e\}_{\e>0}\subset
W^{1,q_1}_{loc}(\O,\R^k)\cap W^{(n+1),p}_{loc}(\O,\R^k)$,
$\{m_\e\}_{\e>0}\subset L^{q_2}_{loc}(\O,\R^{d\times N})\cap
W^{n,p}_{loc}(\O,\R^{d\times N})$ and
$\big\{\psi_\e:\O\to\mathcal{M}\big\}_{\e>0}\subset
L^{q_3}_{loc}(\O,\R^{m})\cap W^{n,p}_{loc}(\O,\R^m)$ be such that
$div_x m_\e(x)\equiv 0$ in $\O$, $v_\e\to v$ in
$W^{1,q_1}_{loc}(\O,\R^k)$ as $\e\to 0^+$, $m_\e\to \bar m$ in
$L^{q_2}_{loc}(\O,\R^{d\times N})$ as $\e\to 0^+$ and $\psi_\e\to
\varphi$ in $L^{q_3}_{loc}(\O,\R^{m})$.
Clearly there exist
$L(x'),L_\e(x'):\big\{x'\in\R^{N-1}:\;|x'_j|<c_0\;\forall
j\big\}\to\R^{d\times(N-1)}$ such that $\Div_{x'}L(x')\equiv \bar
m_{1}(0,x')$ and $\Div_{x'}L_\e(x')\equiv m_{\e,1}(0,x')$, where we
denote by $\bar m_{1}(x):\O\to\R^d$ and $\bar m'(x):\O\to
\R^{d\times (N-1)}$ the first column and the rest of the matrix
valued function $\bar m(x):\O\to\R^{d\times N}$, so that $\big(\bar
m_{1}(x),\bar m'(x)\big):=\bar m(x):\O\to\R^{d\times N}$, and we
denote by $m_{\e,1}(x):\O\to\R^d$ and $m'_\e(x):\O\to \R^{d\times
(N-1)}$ the first column and the rest of the matrix valued function
$m_\e(x):\O\to\R^{d\times N}$, so that
$\big(m_{\e,1}(x),m'_\e(x)\big):=m_\e(x):\O\to\R^{d\times N}$. Then
define $\Psi_\e,\Psi:\R^N\to\R^m$ and $M_\e,M:\R^N\to\R^{d\times
(N-1)}$ by
\begin{multline}\label{vhgvtguyiiuijjkjkkjggjkjjhjkkllgvvjhkjhk}
\Psi_\e(x):=\int_{0}^{x_1}\psi_\e(s,x')ds\,,\quad\Psi(x):=\int_{0}^{x_1}\varphi(s,x')ds\,,\quad M(x):=-L(x')+\int_{0}^{x_1}\bar m'(s,x')ds\quad\text{and}\\
M_\e(x):=-L_\e(x')+\int_{0}^{x_1}m'_\e(s,x')ds\quad\quad\forall
x=(x_1,x'):=(x_1,x_2,\ldots x_N)\in\O\,,
\end{multline}
Then, since $div_x \bar m\equiv 0$ and $div_x m_\e\equiv 0$, by
\er{vhgvtguyiiuijjkjkkjggjkjjhjkkllgvvjhkjhk} we obtain
\begin{multline}\label{vhgvtguyiiuijjkjkkjggjkjjhjkkllghjjhjhhkjhkljljhlk}
\frac{\partial\Psi}{\partial x_1}(x)=\varphi(x)\,,\quad
\frac{\partial M}{\partial x_1}(x)=\bar m'(x)\,,\quad
-div_{x'}M(x)=\bar m_{1}(x)\quad\text{for a.e.}\;\;
x=(x_1,x')\in\O\,,\quad\text{and}\\
\frac{\partial\Psi_\e}{\partial x_1}(x)=\psi_\e(x)\,,\quad
\frac{\partial M_\e}{\partial x_1}(x)=m'_\e(x)\,,\quad
-div_{x'}M_\e(x)=m_{\e,1}(x)\quad\quad\text{for a.e.}\;\;
x=(x_1,x')\in\O\,.
\end{multline}
Therefore,
the result follows by applying
Theorem \ref{dehgfrygfrgygen} to the functions $\{v,M,\Psi\}$ and to
the sequence $\{v_\e,M_\e,\Psi_\e\}$.
\end{proof}

\section{Further estimates for the lower
bound}\label{vdhgvdfgbjfdhgf}
\begin{lemma}\label{L2009.02new}
Let
$\vec Q\in\mathcal{L}(\R^{m\times N},\R^d)$ be linear operator and
let $F\in C^0(\R^{k}\times\R^d\times\R^m,\R)$ be such that $F\geq 0$
and there exist $C>0$, $q\geq 1$ and $p=(p_1,p_2,\ldots,
p_k)\in\R^k$ such that $p_j\geq 1$ for every $j$ and
\begin{equation}\label{RsTT}
0\leq F(a,b,c)\leq
C\Big(|a|^{p}+|b|^q+|c|^q+1\Big)\quad\quad\forall\,
(a,b,c)\in\R^{k}\times\R^d\times\R^m
\end{equation}
(see Definition \ref{gdhgvdgjkdfgjkhdd}). Next let $\vec\nu\in
S^{N-1}$ and let $\big\{m_\e(x)\big\}_{0<\e<1}\subset
L^{p}_{loc}(I_{\vec \nu},\R^k)$ and $m_0(x)\in L^{p}(I_{\vec
\nu},\R^k)$ be such that $F\big(m_0(x),0,0\big)= 0$ for a.e. $x\in
I_{\vec \nu}$ and $\,\lim_{\e\to 0^+}m_\e=m_0$ in
$L^{p}_{loc}(I_{\vec \nu},\R^k)$, where, as before, $I_{\vec
\nu}:=\{y\in\R^N:\;|y\cdot \vec\nu_j|<1/2\;\;\;\forall j=1,
\ldots
N\}$ and $\{\vec\nu_1,
\ldots,\vec\nu_N\}\subset\R^N$ is an
orthonormal base in $\R^N$, such that $\vec\nu_1:=\vec \nu$.
Furthermore, let
$\big\{\varphi_\e(x)\big\}_{0<\e<1}\subset L^{q}_{loc}(I_{\vec
\nu},\R^m)$ be such that
$\,\lim_{\e\to 0^+}\varphi_\e=0$ in $L^{q}_{loc}(I_{\vec
\nu},\R^m)$,
and $\lim_{\e\to 0^+}\big(\e\vec Q\cdot\nabla\varphi_\e\big)=0$ in
$L^q_{loc}(I_{\vec \nu},\R^{d})$.
Then there exist $\big\{r_\e\big\}_{0<\e<1}\subset(0,1)$
and $\big\{\psi_\e(x)\big\}_{0<\e<1}\subset
C^{\infty}_{c}(I_{\vec \nu},\R^m)$ such that $\lim_{\e\to
0^+}r_\e=1$,
$\,\lim_{\e\to 0^+}\psi_\e=0$ in $L^{q}(I_{\vec \nu},\R^m)$,
$\lim_{\e\to 0^+}\big(\e\vec Q\cdot\nabla \psi_\e\big)=0$ in
$L^q(I_{\vec \nu},\R^{d})$,
$\,\lim_{\e\to 0^+}m_{(r_\e\e)}\big(r_\e
x\big)=m_0(x)$ in $L^{p}(I_{\vec \nu},\R^k)$ and
\begin{multline}\label{fvyjhfyffhjfghgjkghfff}
\liminf_{\e\to 0^+}\int\limits_{I_{\vec \nu}}\frac{1}{\e}
F\bigg(m_\e(x)\,,\,
\e\vec Q\cdot\nabla\varphi_{\e}(x)\,,\,
\varphi_{\e}(x)\bigg)dx \geq \liminf_{\e\to 0^+}\int\limits_{I_{\vec
\nu}}\frac{1}{\e}F\bigg(m_{(r_\e\e)}\big(r_\e x\big)\,,\,
\e\vec Q\cdot\nabla\psi_{\e}(x)\,,\,
\psi_{\e}(x)\bigg)dx\,.
\end{multline}
\end{lemma}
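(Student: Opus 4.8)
\medskip
\noindent\emph{Sketch of the argument (plan of proof).}
The strategy is to first rescale the sequence by a factor $r$ slightly below $1$ — which upgrades the given local convergences to \emph{global} ones on $I_{\vec\nu}$ — then to cut the rescaled profile down to $0$ in a very thin layer along $\partial I_{\vec\nu}$ so as to produce a compactly supported competitor, and finally to diagonalise over $r\uparrow 1$. We may assume $L:=\liminf_{\e\to 0^+}\frac{1}{\e}\int_{I_{\vec\nu}}F\big(m_\e,\,\e\vec Q\cdot\nabla\varphi_\e,\,\varphi_\e\big)\,dx<+\infty$, otherwise the asserted inequality is trivial (take $r_\e:=1-\e$ and $\psi_\e:\equiv 0$; the required convergences follow from the computations below). \emph{Rescaling.} For $r\in(0,1)$ put $\hat\varphi_{\e,r}(y):=\varphi_{r\e}(ry)$ and $\hat m_{\e,r}(y):=m_{r\e}(ry)$, defined on $(1/r)I_{\vec\nu}$ — a set that compactly contains $\overline{I_{\vec\nu}}$. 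Since $\e\,\vec Q\cdot\nabla_y\hat\varphi_{\e,r}(y)=(r\e)\,(\vec Q\cdot\nabla\varphi_{r\e})(ry)$, the change of variables $x=ry$ gives
\begin{equation*}
\frac{1}{\e}\int_{I_{\vec\nu}}F\big(\hat m_{\e,r},\,\e\vec Q\cdot\nabla\hat\varphi_{\e,r},\,\hat\varphi_{\e,r}\big)\,dy=\frac{1}{r^{N-1}}\,\frac{1}{r\e}\int_{rI_{\vec\nu}}F\big(m_{r\e},\,(r\e)\vec Q\cdot\nabla\varphi_{r\e},\,\varphi_{r\e}\big)\,dx\ \le\ \frac{1}{r^{N-1}}\,\frac{1}{r\e}\int_{I_{\vec\nu}}F\big(m_{r\e},\,(r\e)\vec Q\cdot\nabla\varphi_{r\e},\,\varphi_{r\e}\big)\,dx ,
\end{equation*}
using $F\ge 0$ and $rI_{\vec\nu}\subset I_{\vec\nu}$; as $\e\to0^+$ also $r\e\to0^+$, hence the $\liminf$ of the last term is at most $L/r^{N-1}$. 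The decisive gain is that, $I_{\vec\nu}$ being compactly contained in $(1/r)I_{\vec\nu}$, the hypotheses become global on $I_{\vec\nu}$: $\hat\varphi_{\e,r}\to 0$ in $L^{q}(I_{\vec\nu},\R^m)$, $\e\vec Q\cdot\nabla\hat\varphi_{\e,r}\to 0$ in $L^{q}(I_{\vec\nu},\R^d)$, $\hat m_{\e,r}\to m_0(r\,\cdot)$ in $L^{p}(I_{\vec\nu},\R^k)$ as $\e\to0^+$, and $F\big(m_0(r\,\cdot),0,0\big)=0$ a.e.\ on $I_{\vec\nu}$.

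\emph{Cut-off and mollification.} Fix $r\in(0,1)$. One glues $\hat\varphi_{\e,r}$ to $0$ in a thin layer adjacent to $\partial I_{\vec\nu}$: take a smooth cut-off $\eta_{\e,t}$ equal to $1$ on a large subcube, supported in $I_{\vec\nu}$, with its transition confined to a layer $\Lambda_{\e,t}$ of width $w_\e:=\e\,\omega_\e$, where $\omega_\e\downarrow0$ is chosen slowly enough (in terms of $\|\hat\varphi_{\e,r}\|_{L^q(I_{\vec\nu})}$) that $\omega_\e^{\,1-q}\|\hat\varphi_{\e,r}\|_{L^q(I_{\vec\nu})}^{q}\to 0$, and the position $t$ of the layer is selected by an averaging (slicing) argument over the admissible positions, in the spirit of the classical De~Giorgi cut-off technique — averaging shows that the integral over $t$ of $\frac{1}{\e}\big(\text{energy in }\Lambda_{\e,t}\big)$ tends to $0$, so a good position exists. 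Let $\psi_{\e,r}\in C^\infty_c(I_{\vec\nu},\R^m)$ be this glued function mollified at a scale $\mu_\e\downarrow0$; then $|\psi_{\e,r}|\le|\hat\varphi_{\e,r}|+o(1)$, $\psi_{\e,r}$ equals the mollification of $\hat\varphi_{\e,r}$ on a subcube exhausting $I_{\vec\nu}$, and hence $\psi_{\e,r}\to 0$ in $L^q$ and $\e\vec Q\cdot\nabla\psi_{\e,r}\to 0$ in $L^q$. Splitting the new energy into three parts: on the subcube where $\psi_{\e,r}$ is a mollification of $\hat\varphi_{\e,r}$ it is at most $\frac{1}{\e}\int_{I_{\vec\nu}}F(\hat m_{\e,r},\e\vec Q\cdot\nabla\hat\varphi_{\e,r},\hat\varphi_{\e,r})+o(1)$ (for suitable $\mu_\e\to0$, by continuity of $F$ and \eqref{RsTT}); on the transition layer $\Lambda_{\e,t}$ it tends to $0$, by the slicing estimate together with \eqref{RsTT}, $F(m_0(r\,\cdot),0,0)=0$ and the global convergences above; and on $\{\psi_{\e,r}=0\}$, a layer of measure $O(w_\e)=o(\e)$ along $\partial I_{\vec\nu}$, the density is $F(\hat m_{\e,r},0,0)\le C\big(|\hat m_{\e,r}|^{p}+1\big)$ (cf.\ Definition~\ref{gdhgvdgjkdfgjkhdd}), so after division by $\e$ the constant term is $O(\omega_\e)\to0$ and the $|\hat m_{\e,r}|^{p}$ term $\to0$ as well (again picking the layer position by averaging, using $\hat m_{\e,r}\to m_0(r\,\cdot)$ globally in $L^p$ and $F(m_0(r\,\cdot),0,0)=0$). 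Altogether $\liminf_{\e\to0^+}\frac{1}{\e}\int_{I_{\vec\nu}}F(\hat m_{\e,r},\e\vec Q\cdot\nabla\psi_{\e,r},\psi_{\e,r})\le L/r^{N-1}$.

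\emph{Diagonalisation.} Run the previous step along $r=r_n\uparrow 1$ with $r_n\in(0,1)$. For each $n$, as $\e\to0^+$ the functions $\psi_{\e,r_n}$ obey $\psi_{\e,r_n}\to0$ in $L^q$, $\e\vec Q\cdot\nabla\psi_{\e,r_n}\to0$ in $L^q$, $m_{(r_n\e)}(r_n\,\cdot)\to m_0(r_n\,\cdot)$ in $L^p$, together with the energy bound $L/r_n^{N-1}$; moreover $m_0(r_n\,\cdot)\to m_0$ in $L^p$ as $n\to\infty$ by continuity of dilations in $L^p$. A standard diagonal extraction then yields $\e_n\downarrow0$ such that, putting $r_\e:=r_n$ and $\psi_\e:=\psi_{\e,r_n}$ for $\e\in(\e_{n+1},\e_n]$, one has $r_\e\in(0,1)$ with $r_\e\to1$, $\psi_\e\in C^\infty_c(I_{\vec\nu},\R^m)$ with $\psi_\e\to0$ in $L^q(I_{\vec\nu},\R^m)$, $\e\vec Q\cdot\nabla\psi_\e\to0$ in $L^q(I_{\vec\nu},\R^d)$, $m_{(r_\e\e)}(r_\e\,\cdot)\to m_0$ in $L^p(I_{\vec\nu},\R^k)$, and $\liminf_{\e\to0^+}\frac{1}{\e}\int_{I_{\vec\nu}}F\big(m_{(r_\e\e)}(r_\e\,\cdot),\e\vec Q\cdot\nabla\psi_\e,\psi_\e\big)\le\liminf_{n\to\infty}L/r_n^{N-1}=L$, which is \eqref{fvyjhfyffhjfghgjkghfff}.

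\emph{Main obstacle.} The delicate point is the cut-off step. Because $F$ is controlled only one-sidedly by \eqref{RsTT} — in particular it carries a constant ``$+1$'' — and has no modulus of continuity, the zero-region of $\psi_\e$ must have $o(\e)$ measure (otherwise, after division by $\e$, its contribution does not vanish), which forces the transition to occur in an $o(\e)$-thin layer hugging $\partial I_{\vec\nu}$; this in turn makes $\e\,|\nabla\eta_{\e,t}|\sim 1/\omega_\e\to\infty$ appear in the gradient slot, and the only way to keep $\e\vec Q\cdot\nabla\psi_\e$ small is to trade this blow-up against the now \emph{global} smallness of $\|\hat\varphi_{\e,r}\|_{L^q(I_{\vec\nu})}$. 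Realising this balance while making all three pieces of the split energy vanish with a single admissible choice of cut-off — which is exactly what the averaging/slicing argument is built to do — is the technical heart of the proof.
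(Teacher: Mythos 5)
Your global architecture (rescale, cut off, diagonalise) is close in spirit to the paper's proof, which cuts off first and then rescales the retained subcube $I^{t+\e}$ up to $I_{\vec\nu}$ --- that final rescaling, diagonalised along $t\uparrow 1/2$, is where $r_\e\to1$ comes from. Your initial rescaling step and your final diagonalisation are sound. The gap is in the cut-off step, and it originates in the premise you announce in your ``main obstacle'' paragraph: you insist that the zero-set of $\psi_\e$ have measure $o(\e)$, which forces the transition layer $\Lambda_{\e,t}$ into an $o(\e)$-neighbourhood of $\partial I_{\vec\nu}$, so that the admissible positions $t$ range over an interval of length $o(\e)$. De Giorgi slicing gains a factor of $\e$ only when the slice position ranges over an interval of length bounded below: then $\int_0^\tau \frac{1}{\e}\int_{\Lambda_{\e,t}} g\,dx\,dt \lesssim \int_{I_{\vec\nu}} g\,dx$, because each point lies in $\Lambda_{\e,t}$ for a set of $t$'s of measure $O(\e)$, and it suffices that $\int g\to 0$. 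Averaging over positions confined to an $o(\e)$-interval gains nothing: at best $\int_{\Lambda_{\e,t}} g \lesssim \int_{\text{strip}} g$ for a good $t$, where the strip is the $o(\e)$-neighbourhood of the boundary. Consequently the terms $\frac{1}{\e}\int_{\Lambda_{\e,t}} |\hat m_{\e,r}|^{p}\,dx$ and $\frac{1}{\e}\int_{\Lambda_{\e,t}} |\hat\varphi_{\e,r}|^{q}\,dx$, which appear as soon as you apply \eqref{RsTT} on the layer, are only $\frac{1}{\e}\cdot o(1)$: uniform integrability of $\{|\hat m_{\e,r}|^{p}\}$ gives $o(1)$ on sets of measure $o(\e)$, never $o(\e)$. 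Your balance condition $\omega_\e^{1-q}\|\hat\varphi_{\e,r}\|_{L^q}^{q}\to0$ controls only the gradient slot, not these terms; the same objection defeats your zero-region estimate (``the $|\hat m_{\e,r}|^{p}$ term $\to0$ as well'' --- not after division by $\e$).

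The premise itself should be dropped rather than accommodated. The paper never keeps a positive-measure zero-region in the final domain of integration: it slices over a macroscopic range $t\in(0,\tau)$, cuts $\varphi_\e$ down to $0$ across a layer of width $\Theta(\e)$ located at the selected macroscopic position $t$ (so the Fubini computation reduces the $t$-integral of $\frac1\e\times(\text{layer energy})$ to full-cube integrals of $F\big(m_\e,\e\vec Q\cdot\nabla\psi_{\e,|x_j-\e s|},\psi_{\e,|x_j-\e s|}\big)$, which vanish by the global $L^q$-smallness of the $\psi$-arguments, $m_\e\to m_0$ in $L^p$, $F(m_0,0,0)=0$ and \eqref{RsTT}), discards the annulus $I_{\vec\nu}\setminus I^{t+\e}$ where $F\ge0$, and rescales $I^{t+\e}$ back to $I_{\vec\nu}$. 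Your preliminary rescaling by $r<1$ is a legitimate alternative way to generate the factor $r_\e$ and to globalise the convergences, but it cannot substitute for the macroscopic slicing, which is the only mechanism in this proof that neutralises the $\frac1\e$ in front of the layer contribution.
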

\begin{proof}
Clearly we may assume
\begin{equation}\label{ggfghjjhfhfjfhjhjhjhjfjkdghfdnew}
\liminf_{\e\to 0^+}\int\limits_{I_{\vec \nu}}\frac{1}{\e}
F\bigg(m_\e(x)\,,\,
\e\vec Q\cdot\nabla\varphi_{\e}(x)\,,\,
\varphi_{\e}(x)\bigg)dx<+\infty\,,
\end{equation}
otherwise it is trivial. Moreover, without any loss of generality we
may assume that $\vec\nu=\vec e_1:=(1,
0,\ldots,0)$ and $I_{\vec
\nu}=I:=\big\{y=(y_1,
\ldots, y_N)\in\R^N:\;|y_j|<1/2\;\;\;\forall j=1,
\ldots, N\big\}$.
Furthermore, since, by mollification, we always can approximate
$\varphi_\e$ by smooth functions, there exist
$\big\{\bar\varphi_\e(x)\big\}_{0<\e<1}\subset C^\infty(I_{\vec
\nu},\R^m)$, such that
$\,\lim_{\e\to 0^+}\bar\varphi_\e=0$ in $L^{q}_{loc}(I_{\vec
\nu},\R^m)$,
$\lim_{\e\to 0^+}\big(\e\vec Q\cdot\nabla\bar\varphi_\e\big)=0$ in
$L^q_{loc}(I_{\vec \nu},\R^{d})$ and
\begin{multline}\label{fughfighdfighfihklhh}
\frac{1}{\e}F\bigg(m_\e(x)\,,\,
\e\vec Q\cdot\nabla\bar \varphi_{\e}(x)\,,\,
\bar \varphi_{\e}(x)\bigg)- \frac{1}{\e}F\bigg(m_\e(x)\,,\,
\e\vec Q\cdot\nabla\varphi_{\e}(x)\,,\,
\varphi_{\e}(x)\bigg)\to 0 \;\;\;\text{in}\;\,L^1_{loc}(I_{\vec
\nu},\R)\;\,\text{as}\;\,\e\to 0^+.
\end{multline}

 Next consider $l(t)\in C^\infty(\R,\R)$ with the properties
\begin{equation}\label{L2009smooth1new}
\begin{cases}l(t)=1 \quad\quad\quad\text{for every }t\in(-\infty,\delta)\,,\\
l(t)\in[0,1] \quad\;\;\text{for every }t\in[\delta,1-\delta]\,,\\
l(t)=0 \quad\quad\quad\text{for every }
t\in(1-\delta,+\infty)\,,\end{cases}
\end{equation}
where $\delta\in(0,1/2)$. Clearly such a function exists.
Then for every $0\leq t<1/2$ define
\begin{equation}\label{L2009deftvu1hhhjnew}
\psi_{\e,t}(x):=\bar\varphi_\e(x)\times\Prod\limits_{j=1}^{N}\bigg(l\big((x_j-t)/\e\big)\cdot
l\big(-(t+x_j)/\e\big)\bigg)\quad \quad\forall x\in\R\,.
\end{equation}
Then for every $t\in[0,1/2)$ clearly $\psi_{\e,t}\in
C^\infty(I^1,\R^m)$
where
\begin{equation}\label{vhjfyhjgjkgjkghjk}
I^s=\Big\{y=(y_1,
\ldots, y_N)\in\R^N:\;|y_j|<s/2\;\;\;\forall j=1,
\ldots,
N\Big\}\quad\forall s>0\,.
\end{equation}
Moreover, for each such $t\in[0,1/2)$ we have
\begin{equation}\label{L2009eqgl1new}
\begin{cases}
\psi_{\e,t}(x)=\bar\varphi_\e(x)
\quad\text{if for every}\;\; j\in\{1,
\ldots,N\}\;\;\text{we have}\;\; |x_j|<t\,,\\
\psi_{\e,t}(x)=0
\quad\text{if
}|x_j|>t+(1-\delta)\e\;\;\text{for some}\;\; j\in\{1,
\ldots,N\}\,.
\end{cases}
\end{equation}
So, by \er{L2009eqgl1new} for small $\e>0$ we have $\psi_{\e,t}\in
C^\infty_c(I^{t+\e},\R^m)$.
Next we will prove that for every $\tau\in(0,1/2)$ we have
\begin{equation}
\label{L2009small1new} \lim_{\e\to
0}\int_{0}^{\tau}\,\int_{\cup_{j=1}^{N}\{ x\in
I^{t+\e}:\,|x_j|>t\}}\frac{1}{\e}
F\bigg(m_\e(x)\,,\,
\e\vec Q\cdot\nabla\psi_{\e,t}(x)\,,\,
\psi_{\e,t}(x)\bigg)dx\,dt=0\,.
\end{equation}
Indeed, fix $\tau_0\in(\tau,1/2)$. Then for $0<\e<(\tau_0-\tau)/2$
we have
\begin{multline}\label{L2009small1hjhjjhhjhhjjioiiounew}
\int_{0}^{\tau}\,\int_{\cup_{j=1}^{N}\{ x\in
I^{t+\e}:\,|x_j|>t\}}\frac{1}{\e}
F\bigg(m_\e(x)\,,\,
\e\vec Q\cdot\nabla\psi_{\e,t}(x)\,,\,
\psi_{\e,t}(x)\bigg)dx\,dt\leq\\
\sum\limits_{j=1}^{N}\int_{0}^{\tau}\,\int_{\{ x\in
I^{t+\e}:\,|x_j|>t\}}\frac{1}{\e}
F\bigg(m_\e(x)\,,\,
\e\vec Q\cdot\nabla\psi_{\e,t}(x)\,,\,
\psi_{\e,t}(x)\bigg)dxdt\leq\\
\sum\limits_{j=1}^{N}\frac{1}{\e}\int\limits_{-\tau}^{\tau}\Bigg\{
\int\limits_{\{x: x\in I^{|t|+\e},-\e<x_j<\e\}}F\bigg(m_\e(x+t\vec
e_j)\,,\,
\e\vec Q\cdot\nabla\psi_{\e,|t|}(x+t\vec e_j)\,,\,
\psi_{\e,|t|}(x+t\vec e_j)\bigg)dx\Bigg\}dt\\
\leq \sum\limits_{j=1}^{N}\frac{1}{\e}\int\limits_{-\e}^{\e}\Bigg\{
\int\limits_{I^{\tau+\e}}F\bigg(m_\e(x+s\vec e_j)\,,\,
\e\vec Q\cdot\nabla\psi_{\e,|x_j|}(x+s\vec e_j)\,,\,
\psi_{\e,|x_j|}(x+s\vec e_j)\bigg)dx\Bigg\}ds\\
\leq \sum\limits_{j=1}^{N}\frac{1}{\e}\int\limits_{-\e}^{\e}\Bigg\{
\int\limits_{I^{\tau_0}}F\bigg(m_\e(x)\,,\,
\e\vec Q\cdot\nabla\psi_{\e,|x_j-s|}(x)\,,\,
\psi_{\e,|x_j-s|}(x)\bigg)dx\Bigg\}ds\,.
\end{multline}
Thus changing variables in \er{L2009small1hjhjjhhjhhjjioiiounew}
gives
\begin{multline}\label{L2009small1hjhjjhhjhhjjjkljkljklnew}
\int_{0}^{\tau}\,\int_{\cup_{j=1}^{N}\{ x\in
I^{t+\e}:\,|x_j|>t\}}\frac{1}{\e}
F\bigg(m_\e(x)\,,\,
\e\vec Q\cdot\nabla\psi_{\e,t}(x)\,,\,
\psi_{\e,t}(x)\bigg)dx\,dt\\
\leq\sum\limits_{j=1}^{N}\int\limits_{-1}^{1}\Bigg\{
\int\limits_{I^{\tau_0}}F\bigg(m_\e(x)\,,\,
\e\vec Q\cdot\nabla\psi_{\e,|x_j-\e s|}(x)\,,\,
\psi_{\e,|x_j-\e s|}(x)\bigg)dx\Bigg\}ds\,.
\end{multline}
Next,
clearly by \er{L2009deftvu1hhhjnew} there exists a constant $C_0>0$
such that for every $j\in\{1,
\ldots, N\}$ every $s\in[0,1/2)$ and
every $\e\in(0,1)$ we have
\begin{equation}\label{ffyfyguihihiuiolkkkkjjjkjkjknew}
\int_{I^{\tau_0}}\bigg(
\Big|\e\vec Q\cdot\nabla\psi_{\e,s}(x)\Big|^q+
\Big|\psi_{\e,s}(x)\Big|^{q}
\bigg)dx \leq C_0\int_{I^{\tau_0}}\Bigg(
\Big|\e\vec Q\cdot\nabla\bar\varphi_\e(x)\Big|^q+
\big|\bar\varphi_\e(x)\big|^{q}
\Bigg)dx.
\end{equation}
In particular since $0<\tau<\tau_0<1/2$ were chosen arbitrary we
deduce that for every $s\in[0,1/2)$ we have
$\,\lim_{\e\to 0^+}\psi_{\e,s}=0$ in $L^{q}_{loc}(I_{\vec
\nu},\R^m)$ and
$\lim_{\e\to 0^+}\big(\e\vec Q\cdot\nabla\psi_{\e,s}\big)=0$  in
$L^{q}_{loc}(I_{\vec \nu},\R^d)$.
Next, by \er{ffyfyguihihiuiolkkkkjjjkjkjknew} we deduce
\begin{multline}\label{ffyfyguihihiuiolkkknew}
\lim_{\e\to 0^+}\int\limits_{I^{\tau_0}}\bigg(
\Big|\e\vec Q\cdot\nabla\psi_{\e,|x_j-\e s|}(x)\Big|^q+
\Big|\psi_{\e,|x_j-\e s|}(x)\Big|^{q}
\bigg)dx=0 \;\;\text{uniformly by}\;s\in(-1,1)\;\;\forall j=1,\ldots
,N.
\end{multline}
On the other hand we have $\lim_{\e\to 0^+}m_\e=m_0$ in
$L^{p}(I_{\tau_0},\R^k)$, $F\big(m_0(x),0,0\big)=0$ for a.e. $x\in
I_{\tau_0}$ and \er{RsTT}. Therefore, by \er{ffyfyguihihiuiolkkknew}
for every $j=1,
\ldots, N$ we deduce
\begin{equation}
\label{L2009small1hjhjjhhjhhjjjkljkljklhjhjhihjnew} \lim_{\e\to
0^+}\int\limits_{-1}^{1}\Bigg\{
\int\limits_{I^{\tau_0}}F\bigg(m_\e(x)\,,\,
\e\vec Q\cdot\nabla\psi_{\e,|x_j-\e s|}(x)\,,\,
\psi_{\e,|x_j-\e s|}(x)\bigg)dx\Bigg\}ds=0\,.
\end{equation}
Then plugging \er{L2009small1hjhjjhhjhhjjjkljkljklhjhjhihjnew} into
\er{L2009small1hjhjjhhjhhjjjkljkljklnew} we deduce
\er{L2009small1new}.

 Next let $\e_n\downarrow 0$ be such that
\begin{multline}\label{ggfghjjhfhfjfhjhjhjhjfjkdghfdnmbbihhbbmnew}
\lim_{n\to +\infty}\int\limits_{I_{\vec \nu}}\frac{1}{\e_n}
F\bigg(m_{\e_n}(x)\,,\,
\e_n\vec Q\cdot\nabla\varphi_{\e_n}(x)\,,\,
\varphi_{\e_n}(x)\bigg)dx=\liminf_{\e\to 0^+}\int\limits_{I_{\vec
\nu}}\frac{1}{\e} F\bigg(m_\e(x)\,,\,
\e\vec Q\cdot\nabla\varphi_{\e}(x)\,,\,
\varphi_{\e}(x)\bigg)dx\,.
\end{multline}
Then, since \er{L2009small1new} is valid for every $\tau\in(0,1/2)$,
we can pass to a subsequence, still denoted by $\e_n\downarrow 0$,
so that for a.e. $t\in(0,1/2)$ we will have
\begin{equation}
\label{L2009small1hjkhhjhjnew}
\lim_{n\to+\infty}\int_{\cup_{j=1}^{N}\{ x\in
I^{t+\e_n}:\,|x_j|>t\}}\frac{1}{\e_n}
F\bigg(m_{\e_n}(x)\,,\,
\e_n\vec Q\cdot\nabla\psi_{\e_n,t}(x)\,,\,
\psi_{\e_n,t}(x)\bigg)dx=0\,.
\end{equation}
Therefore, by \er{fughfighdfighfihklhh}, \er{L2009eqgl1new} and
\er{L2009small1hjkhhjhjnew}, for a.e. $t\in(0,1/2)$ we have
\begin{multline}\label{ggfghjjhfhfjfhjhjhjhjfjkdghfdnmbguiyuihyuyhukjnew}
\lim_{n\to +\infty}\int\limits_{I_{\vec \nu}}\frac{1}{\e_n}
F\bigg(m_{\e_n}(x)\,,\,
\e_n\vec Q\cdot\nabla\varphi_{\e_n}(x)\,,\,
\varphi_{\e_n}(x)\bigg)dx \geq \limsup_{n\to
+\infty}\int_{I^t}\frac{1}{\e_n}
F\bigg(m_{\e_n}(x)\,,\,
\e_n\vec Q\cdot\nabla\varphi_{\e_n}(x)\,,\,
\varphi_{\e_n}(x)\bigg)dx\\= \limsup_{n\to
+\infty}\int_{I^t}\frac{1}{\e_n}
F\bigg(m_{\e_n}(x)\,,\,
\e_n\vec Q\cdot\nabla\psi_{\e_n,t}(x)\,,\,
\psi_{\e_n,t}(x)\bigg)dx\\= \limsup_{n\to
+\infty}\int_{I^{t+\e_n}}\frac{1}{\e_n}
F\bigg(m_{\e_n}(x)\,,\,
\e_n\vec Q\cdot\nabla\psi_{\e_n,t}(x)\,,\,
\psi_{\e_n,t}(x)\bigg)dx\,.
\end{multline}
Thus, by \er{ggfghjjhfhfjfhjhjhjhjfjkdghfdnmbbihhbbmnew},
\er{L2009eqgl1new} and
\er{ggfghjjhfhfjfhjhjhjhjfjkdghfdnmbguiyuihyuyhukjnew} for a.e.
$t\in(0,1/2)$ we have
\begin{multline}\label{ggfghjjhfhfjfhjhjhjhjfjkdghfdnmbguiyuihyuyhukjhhhjhjjnew}
\liminf_{\e\to 0^+}\int\limits_{I_{\vec \nu}}\frac{1}{\e}
F\bigg(m_\e(x),
\e\vec Q\cdot\nabla\varphi_{\e}(x),
\varphi_{\e}(x)\bigg)dx\geq\limsup_{n\to
+\infty}\int\limits_{I^{t+\e_n}}\frac{1}{\e_n}
F\bigg(m_{\e_n}(x),
\e_n\vec Q\cdot\nabla\psi_{\e_n,t}(x),
\psi_{\e_n,t}(x)\bigg)dx.
\end{multline}
Since the last inequality is valid for a.e. $t\in(0,1/2)$, by
diagonal arguments we deduce that there exists a new sequences
$t_n\uparrow (1/2)$ and $\e_n\downarrow 0$ as $n\to +\infty$ such
that $\e_n+t_n<1/2$,
\begin{equation}\label{vjhghihikhklhjkg}
\lim_{n\to+\infty}\int_{I^{t_n+\e_n}}\Big|m_{\e_n}(x)-m_0(x)\Big|^pdx=0\,,
\end{equation}
\begin{equation}
\label{hdfghfighfigh} \lim_{n\to+\infty}\int_{I^{t_n+\e_n}}\bigg(
\Big|\psi_{\e_n,t_n}\Big|^{q}+
\Big|\e_n\vec Q\cdot\nabla\psi_{\e_n,t_n}\Big|^q
\bigg)dx=0,
\end{equation}
and
\begin{multline}\label{ggfghjjhfhfjfhjhjhjhjfjkdghfdnmbguiyuihyuyhukjnewgffhjhj}
\liminf_{\e\to 0^+}\int\limits_{I_{\vec \nu}}\frac{1}{\e}
F\bigg(m_\e(x),
\e\vec Q\cdot\nabla\varphi_{\e}(x),
\varphi_{\e}(x)\bigg)dx \geq \lim_{n\to
+\infty}\int\limits_{I^{t_n+\e_n}}\frac{1}{\e_n}
F\bigg(m_{\e_n}(x),
\e_n\vec Q\cdot\nabla\psi_{\e_n,t_n}(x),
\psi_{\e_n,t_n}(x)\bigg)dx.
\end{multline}
On the other hand defining
$\bar\psi_{n}(x):=\psi_{\e_n,t_n}\big((2t_n+2\e_n)x\big)$ we clearly
have $\psi_{\e}\in C^\infty_c(I^1,\R^m)$.
Moreover, changing variables of integration
$z=x/(2t_n+2\e_n)$ in
\er{ggfghjjhfhfjfhjhjhjhjfjkdghfdnmbguiyuihyuyhukjnewgffhjhj} we
finally deduce,
\begin{multline}\label{ggfghjjhfhfjfhjhjhjhjfjkdghfdnmbguiyuihyuyhukjnewgffhjhjglgjklg}
\liminf_{\e\to 0^+}\int\limits_{I_{\vec \nu}}\frac{1}{\e}
F\bigg(m_\e(x)\,,\,
\e\vec Q\cdot\nabla\varphi_{\e}(x)\,,\,
\vec B\cdot\varphi_{\e}(x)\bigg)dx\geq\\
\lim_{n\to +\infty}\int\limits_{I^{1}}\frac{2(t_n+\e_n)}{\e_n}
F\bigg(m_{\e_n}\big(2(t_n+\e_n)z\big)\,,\,
\frac{\e_n}{2(t_n+\e_n)}\vec Q\cdot\nabla\bar\psi_{n}(z)\,,\,
\bar\psi_{n}(z)\bigg)dz,
\end{multline}
and \er{fvyjhfyffhjfghgjkghfff}
follows. Finally, since $m_0\in L^{p}(I_{\vec \nu},\R^k)$,  by
\er{vjhghihikhklhjkg} we deduce $\,\lim_{n\to
+\infty}m_{(r_{\e_n}\e_n)}\big(r_{\e_n} x\big)=m_0(x)$ in
$L^{p}(I_{\vec \nu},\R^k)$, and by \er{hdfghfighfigh} we obtain
$\,\lim_{n\to +\infty}\bar\psi_{n}=0$ in $L^{q}(I_{\vec \nu},\R^m)$
and
$\lim_{n\to +\infty}\big(\e_n\vec Q\cdot\nabla \bar\psi_{n}\big)=0$
in $L^q(I_{\vec \nu},\R^{d})$
This completes the proof.
\end{proof}

By the same method we can prove the following more general result.
\begin{lemma}\label{L2009.02newgen}
Let $n\geq 1$ be a natural number and
let $F\in C^0(\R^{k}\times\R^d\times\R^{m\times
N^{n-1}}\times\ldots\times \R^{m\times N}\times\R^m,\R)$ be such
that $F\geq 0$ and there exist $C>0$, $q\geq 1$ and
$p=(p_1,p_2,\ldots, p_k)\in\R^k$ such that $p_j\geq 1$ for every $j$
and
\begin{multline*}
0\leq F\Big(a,b,c_1,\ldots,c_{n-1},d\Big)\leq
C\Big(|a|^{p}+|b|^q+\sum_{j=1}^{n-1}|c_j|^q+|d|^q+1\Big)\\ \forall\,
(a,b,c_1,\ldots,c_{n-1},d)\in\R^{k}\times\R^d\times\R^{m\times
N^{n-1}}\times\ldots\times \R^{m\times N}\times\R^m
\end{multline*}
(see Definition \ref{gdhgvdgjkdfgjkhdd}).
Next let
$\vec Q_{n}\in\mathcal{L}(\R^{m\times N^{n}},\R^d)$ be a linear
operator, $\vec\nu\in S^{N-1}$ and let
$\big\{m_\e(x)\big\}_{0<\e<1}\subset L^{p}_{loc}(I_{\vec \nu},\R^k)$
and $m_0(x)\in L^{p}(I_{\vec \nu},\R^k)$ be such that
$F\big(m_0(x),0,0,\ldots,0\big)= 0$ for a.e. $x\in I_{\vec \nu}$ and
$\,\lim_{\e\to 0^+}m_\e=m_0$ in $L^{p}_{loc}(I_{\vec \nu},\R^k)$,
where, as before, $I_{\vec \nu}:=\{y\in\R^N:\;|y\cdot
\vec\nu_j|<1/2\;\;\;\forall j=1,
\ldots, N\}$ and
$\{\vec\nu_1,
\ldots,\vec\nu_N\}\subset\R^N$ is an
orthonormal base in $\R^N$, such that $\vec\nu_1:=\vec \nu$.
Furthermore, let
$\big\{\varphi_\e(x)\big\}_{0<\e<1}\subset L^{q}_{loc}(I_{\vec
\nu},\R^m)$
be such that
$\,\lim_{\e\to 0^+}\varphi_\e=0$ in $L^{q}_{loc}(I_{\vec
\nu},\R^m)$,
$\lim_{\e\to 0^+}\big(\e^{n}\vec Q_{n}\cdot\nabla^{n}
\varphi_\e\big)=0$ in $L^q_{loc}(I_{\vec \nu},\R^{d})$,
and we have
$\,\lim_{\e\to 0^+}\big(\e^{j}\nabla^{j}\varphi_\e\big)=0$ in
$L^{q}_{loc}(I_{\vec \nu},\R^{m\times N^{j}})$ for every
$j\in\{1,\ldots,n-1\}$.
Then there exist $\big\{r_\e\big\}_{0<\e<1}\subset(0,1)$ and
$\big\{\psi_\e(x)\big\}_{0<\e<1}\subset C^{\infty}_{c}(I_{\vec
\nu},\R^m)$
such that $\lim_{\e\to
0^+}r_\e=1$,
$\,\lim_{\e\to 0^+}\psi_\e=0$ in $L^{q}(I_{\vec \nu},\R^m)$,
$\lim_{\e\to 0^+}\big(\e^{n}\vec Q_{n}\cdot\nabla^{n}
\psi_\e\big)=0$ in $L^q(I_{\vec \nu},\R^{d})$,
for every $j=1,\ldots,(n-1)$ we have $\lim_{\e\to
0^+}\big(\e^{j}\nabla^{j}\psi_\e\big)=0$ in $L^{q}(I_{\vec
\nu},\R^{m\times N^{j}})$
$\,\lim_{\e\to 0^+}m_{(r_\e\e)}\big(r_\e
x\big)=m_0(x)$ in $L^{p}(I_{\vec \nu},\R^k)$
and
\begin{multline}\label{fvyjhfyffhjfghgjkghfffgen}
\liminf_{\e\to 0^+}\int\limits_{I_{\vec \nu}}\frac{1}{\e}
F\Bigg(m_\e(x)\,,\,\e^n\vec Q_n\cdot\nabla^n\varphi_{\e}(x)\,,\,
\e^{n-1}\nabla^{n-1}\varphi_{\e}(x)\,,\,\ldots\,,\,
\e\nabla\varphi_{\e}(x)\,,\,
\varphi_{\e}(x)
\Bigg)dx\geq\\
 \liminf_{\e\to 0^+}\int\limits_{I_{\vec
\nu}}\frac{1}{\e}F\Bigg(m_{r_\e\e}\big(r_\e x\big)\,,\,\e^n\vec
Q_n\cdot\nabla^n\psi_{\e}(x)\,,\,
\e^{n-1}\nabla^{n-1}\psi_{\e}(x)\,,\,\ldots\,,\,
\e\nabla\psi_{\e}(x)\,,\,
\psi_{\e}(x)
\Bigg)dx\,.
\end{multline}
\end{lemma}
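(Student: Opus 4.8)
The plan is to follow the proof of \rlemma{L2009.02new} verbatim, modifying only the steps that involve the derivative terms. First I would reduce, exactly as there, to the case where the $\liminf$ on the left of \er{fvyjhfyffhjfghgjkghfffgen} is finite, where $\vec\nu=\vec e_1$ and $I_{\vec\nu}=I$ is the open unit cube, and --- by mollification, using the growth bound on $F$ --- where the $\varphi_\e$ are smooth; concretely I replace $\varphi_\e$ by $\bar\varphi_\e\in C^\infty(I,\R^m)$ enjoying the same convergences $\e^j\nabla^j\bar\varphi_\e\to 0$ in $L^q_{loc}$ for $0\le j\le n-1$ and $\e^n\vec Q_n\cdot\nabla^n\bar\varphi_\e\to 0$ in $L^q_{loc}$, with the two energy densities differing by a quantity tending to $0$ in $L^1_{loc}$. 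With the cut-off $l$ of \er{L2009smooth1new} fixed, I define, for $t\in[0,1/2)$,
\begin{equation*}
\psi_{\e,t}(x):=\bar\varphi_\e(x)\times\Prod_{j=1}^{N}\Big(l\big((x_j-t)/\e\big)\cdot l\big(-(t+x_j)/\e\big)\Big),
\end{equation*}
which belongs to $C^\infty(I,\R^m)$, equals $\bar\varphi_\e$ on $\{|x_j|<t\ \forall j\}$, and vanishes outside $\{|x_j|\le t+(1-\delta)\e\ \forall j\}$, hence lies in $C^\infty_c(I^{t+\e},\R^m)$ for small $\e$.

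The one genuinely new estimate is the control of $\e^j\nabla^j\psi_{\e,t}$ for $1\le j\le n$. Expanding by the Leibniz rule, $\e^j\nabla^j\psi_{\e,t}$ is a finite sum of terms in which $\e^{j_1}$ times a derivative of order $j_1$ of the product of cut-offs multiplies $\e^{j_2}\nabla^{j_2}\bar\varphi_\e$, with $j_1+j_2=j$; since a derivative of order $i$ of a single cut-off factor $l(\pm(\,\cdot\,\pm t)/\e)$ carries the factor $\e^{-i}$, the cut-off factor stays bounded in $L^\infty$ uniformly in $\e$ and $t$, while $\e^{j_2}\nabla^{j_2}\bar\varphi_\e\to 0$ in $L^q_{loc}$ by hypothesis (for the top order $j=n$ I also use that $\vec Q_n$ is a bounded linear operator, so the term in which the cut-offs are undifferentiated is handled by $\e^n\vec Q_n\cdot\nabla^n\bar\varphi_\e\to 0$ and the remaining terms as above). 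Hence, for every fixed $t$ and uniformly in $t\in[0,1/2)$, one has $\psi_{\e,t}\to 0$, $\e^j\nabla^j\psi_{\e,t}\to 0$ for $1\le j\le n-1$, and $\e^n\vec Q_n\cdot\nabla^n\psi_{\e,t}\to 0$, all in $L^q_{loc}$, and the same with $t$ replaced by $|x_j-\e s|$, exactly as in \er{ffyfyguihihiuiolkkkkjjjkjkjknew}--\er{ffyfyguihihiuiolkkknew}. Combining this with $F(m_0,0,\ldots,0)=0$, $m_\e\to m_0$ in $L^p_{loc}$ and the growth bound, I get the analogue of \er{L2009small1new}: for every $\tau\in(0,1/2)$ the integral of $\frac{1}{\e}F\big(m_\e,\e^n\vec Q_n\cdot\nabla^n\psi_{\e,t},\e^{n-1}\nabla^{n-1}\psi_{\e,t},\ldots,\e\nabla\psi_{\e,t},\psi_{\e,t}\big)$ over $\bigcup_j\{x\in I^{t+\e}:|x_j|>t\}$ tends to $0$ when averaged over $t\in(0,\tau)$; the change-of-variables bookkeeping is identical to \er{L2009small1hjhjjhhjhhjjioiiounew}--\er{L2009small1hjhjjhhjhhjjjkljkljklnew}.

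From here the argument is verbatim that of \rlemma{L2009.02new}. I would pick $\e_n\downarrow 0$ realizing the $\liminf$, pass to a subsequence so that for a.e.\ $t\in(0,1/2)$ the boundary-strip integral vanishes, and use $\psi_{\e,t}=\bar\varphi_\e$ on the inner cube to obtain, for a.e.\ such $t$,
\begin{equation*}
\liminf_{\e\to 0^+}\int_{I_{\vec\nu}}\frac{1}{\e} F\big(m_\e,\e^n\vec Q_n\cdot\nabla^n\varphi_\e,\ldots,\varphi_\e\big)\,dx\ \ge\ \limsup_{n\to\infty}\int_{I^{t+\e_n}}\frac{1}{\e_n}F\big(m_{\e_n},\e_n^n\vec Q_n\cdot\nabla^n\psi_{\e_n,t},\ldots,\psi_{\e_n,t}\big)\,dx.
\end{equation*}
A diagonal choice of $t_n\uparrow 1/2$ and $\e_n\downarrow 0$ with $t_n+\e_n<1/2$ then yields \er{vjhghihikhklhjkg}, the analogue of \er{hdfghfighfigh} (now also including all the terms $\e_n^j\nabla^j\psi_{\e_n,t_n}$), and the analogue of \er{ggfghjjhfhfjfhjhjhjhjfjkdghfdnmbguiyuihyuyhukjnewgffhjhj}. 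Finally, rescaling $\bar\psi_n(x):=\psi_{\e_n,t_n}\big((2t_n+2\e_n)x\big)\in C^\infty_c(I^1,\R^m)$ and changing variables $z=x/(2t_n+2\e_n)$ --- which turns $\e_n^j\nabla^j$ into $\big(\e_n/(2t_n+2\e_n)\big)^j\nabla^j$, harmless since $2t_n+2\e_n\to 1$ --- gives the right-hand side of \er{fvyjhfyffhjfghgjkghfffgen} with $r_\e:=2t_n+2\e_n$, and the asserted convergences of $\bar\psi_n$ and of $m_{(r_\e\e)}\big(r_\e x\big)$ follow from \er{vjhghihikhklhjkg} and the analogue of \er{hdfghfighfigh}. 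The principal new point over \rlemma{L2009.02new} is the Leibniz-rule bookkeeping for $\e^j\nabla^j\psi_{\e,t}$, $1\le j\le n$, which I would carry out first and expect to be the main --- though routine --- technical obstacle.
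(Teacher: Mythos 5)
Your proposal is correct and follows exactly the route the paper intends: the paper gives no separate proof of Lemma \ref{L2009.02newgen}, asserting only that it follows "by the same method" as Lemma \ref{L2009.02new}, and your argument is precisely that method, with the Leibniz-rule control of $\e^{j}\nabla^{j}\psi_{\e,t}$ (each derivative landing on the cut-off product costing a factor $\e^{-1}$ that is absorbed, and the top-order term handled by the boundedness of $\vec Q_n$ and the scalarity of the cut-off) being the only genuinely new step, which you identify and carry out correctly.
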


As a consequence of Lemma \ref{L2009.02newgen} we have the following
Proposition.
\begin{proposition}\label{L2009.02kkknew}
Let $n_1,n_2\in \mathbb{N}$. Consider the linear operators
$\vec B\in\mathcal{L}(\R^{d\times N},\R^m)$
and $\vec Q\in\mathcal{L}(\R^{k\times N^{n_2}},\R^{l})$, and
let $F\in C^0\big(\{\R^{m\times
N^{n_1}}\times\ldots\times\R^{m\times N}\times\R^{m}\}\times
\{\R^{l}\times\R^{k\times N^{n_2-1}}\times\ldots\times\R^{k\times
N}\times\R^k\}\,,\R\big)$ be such that $F\geq 0$ and there exist
$C>0$ and $p_1\geq 1$, $p_2\geq 1$ satisfying
\begin{multline*}
0\leq F\Big(\{a_1,\ldots, a_{n_1+1}\},\{b_1,\ldots,b_{n_2+1}\}
\Big)\leq
C\bigg(\sum_{j=1}^{n_1+1}|a_j|^{p_1}+\sum_{j=1}^{n_2+1}|b_j|^{p_2}
+1\bigg)\\ \text{for every}\;\; \Big(\{a_1,\ldots,
a_{n_1+1}\},\{b_1,\ldots,b_{n_2+1}\}
\Big)
.
\end{multline*}
Furthermore, let
$\vec\nu\in S^{N-1}$, $\varphi^+,\varphi^-\in\R^k$
and $W^+,W^-\in \R^{m}$ be such that if we set
\begin{equation}\label{vfyguiguhikjnklklhkukuytou}
\varphi_0(x): =\begin{cases}
\varphi^+\;\;\text{if}\;\;x\cdot\vec\nu>0,\\
\varphi^-\;\;\text{if}\;\;x\cdot\vec\nu<0,
\end{cases}
\;\text{and}\quad\; W_0(x): =\begin{cases}
W^+\;\;\text{if}\;\;x\cdot\vec\nu>0,\\
W^-\;\;\text{if}\;\;x\cdot\vec\nu<0,
\end{cases}
\end{equation}
then
\begin{equation}\label{vfyguiguhikjnklklhbjkbbjk}
F\Big(\big\{0,0,\ldots,W_0(x)\big\},
\big\{0,0,\ldots,\varphi_0(x)\big\}\Big)=0\quad\text{for
a.e.}\;x\in\R^N\,.
\end{equation}
Next
let $\big\{w_\e(x)\big\}_{0<\e<1}\subset L^{p_1}_{loc}(I_{\vec
\nu},\R^d)$
and
$\big\{\varphi_\e(x)\big\}_{0<\e<1}\subset
W^{n_2,p_2}_{loc}(I_{\vec \nu},\R^k)$ be such that $\{\vec B\cdot
\nabla w_\e\}\in W^{n_1,p_1}_{loc}(I_{\vec \nu},\R^m)$, $\lim_{\e\to
0^+}\varphi_\e=\varphi_0$ in $L^{p_2}_{loc}(I_{\vec \nu},\R^k)$,
$\lim_{\e\to 0^+}\{\vec B\cdot \nabla w_\e\}=W_0$ in
$L^{p_1}_{loc}(I_{\vec \nu},\R^m)$,
$\lim_{\e\to 0^+}\big(\e^{n_2}\,\vec Q\cdot\{\nabla^{n_2}
\varphi_\e\}\big)=0$ in $L^{p_2}_{loc}(I_{\vec \nu},\R^{l})$
and for every $j=1,2,\ldots, (n_2-1)$ we have
$\lim_{\e\to 0^+}\big(\e^j\,\nabla^j \varphi_\e\big)=0$ in $L^{
p_2}_{loc}(I_{\vec \nu},\R^{k\times N^j})$.
Here, as before, $I_{\vec \nu}:=\{y\in\R^N:\;|y\cdot
\vec\nu_j|<1/2\;\;\;\forall j=1,\ldots, N\}$ where
$\{\vec\nu_1,\ldots,\vec\nu_N\}\subset\R^N$ is an orthonormal base
in $\R^N$ such that $\vec\nu_1:=\vec \nu$.
Then, there exist $\big\{\psi_\e(x)\big\}_{0<\e<1}\subset
\mathcal{S}^{(n_2)}\big(\varphi^+,\varphi^-,I_{\vec\nu}\big)$
and
$\big\{f_\e(x)\big\}_{0<\e<1}\subset
L^{p_1}(I_{\vec \nu},\R^d)$,
where
\begin{multline}\label{L2009Ddef2hhhjjjj77788hhhkkkkllkjjjjkkkhhhhffggdddkkkgjhikhhhjjddddhdkgkkknew}
\mathcal{S}^{(n)}\big(\varphi^+,\varphi^-,I_{\vec\nu}\big):=
\bigg\{\zeta\in
C^n(\R^N,\R^k):\;\zeta(y)=\varphi_0(y)\;\,\text{if}\;\,|y\cdot\vec\nu|\geq
1/2,\;\,\text{and}\;\,\zeta\big(y+\vec \nu_j\big)=\zeta(y)\;\forall
j=2,\ldots, N\bigg\},
\end{multline}
such that $\{\vec B\cdot \nabla f_\e\}\in W^{n_1,p_1}(I_{\vec
\nu},\R^m)$, $\,\lim_{\e\to 0^+}\psi_\e=\varphi_0$ in
$L^{p_2}(I_{\vec \nu},\R^k)$,
$\lim_{\e\to 0^+}\{\vec B\cdot \nabla f_\e\}=W_0$ in
$L^{p_1}(I_{\vec \nu},\R^m)$,
$\lim_{\e\to 0^+}\big(\e^{n_2}\,\vec Q\cdot\{\nabla^{n_2}
\psi_\e\}\big)=0$ in $L^{p_2}(I_{\vec \nu},\R^{l})$,
for every $j=1,
\ldots, (n_2-1)$ we have $\lim_{\e\to
0^+}\big(\e^j\,\nabla^j \psi_\e\big)=0$ in $L^{p_2}(I_{\vec
\nu},\R^{k\times N^j})$,
and
\begin{multline}\label{ggfghjjhfhfjfhjkkknew}
\liminf_{\e\to 0^+}\int_{I_{\vec \nu}}\frac{1}{\e}
F\Bigg(\Big\{\e^{n_1}\nabla^{n_1} \{\vec B\cdot\nabla
w_{\e}\},\ldots,\e\nabla\{\vec B\cdot\nabla w_{\e}\},\{\vec
B\cdot\nabla w_{\e}\}\Big\},
\Big\{\e^{n_2}\vec Q\cdot\nabla^{n_2}
\varphi_{\e},\e^{n_2-1}\nabla^{n_2-1}
\varphi_{\e},\ldots,\varphi_{\e}\Big\}\Bigg)dx \geq\\ \liminf_{\e\to
0^+}\int_{I_{\vec \nu}}\frac{1}{\e}
F\Bigg(\Big\{\e^{n_1}\nabla^{n_1} \{\vec B\cdot\nabla
f_{\e}\},\ldots,\e\nabla\{\vec B\cdot\nabla f_{\e}\},\{\vec
B\cdot\nabla f_{\e}\}\Big\},
\Big\{\e^{n_2}\vec Q\cdot\nabla^{n_2}
\psi_{\e},\e^{n_2-1}\nabla^{n_2-1}
\psi_{\e},\ldots,\psi_{\e}\Big\}\Bigg)dx.
\end{multline}
\end{proposition}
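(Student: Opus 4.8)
The plan is to mimic the box--cutoff construction from the proof of Lemma~\ref{L2009.02newgen}, modified so that the competitor for the last block is driven not to $0$ but to the step profile $\varphi_0$, and is moreover made transversally periodic so as to land in $\mathcal{S}^{(n_2)}(\varphi^+,\varphi^-,I_{\vec\nu})$. First one reduces to $\vec\nu=\vec e_1$ and $I_{\vec\nu}=\{y:|y_j|<1/2\ \forall j\}$, may assume the left--hand $\liminf$ is finite (otherwise nothing to prove) and passes to a subsequence $\e_n\downarrow 0$ realising it; by mollification one may assume $w_\e,\varphi_\e$ smooth, since $F\geq 0$ is continuous with the stated polynomial upper bound, so the integrands converge in $L^1_{loc}$ under mollification. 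Then fix a one--dimensional reference $\eta_\e(x):=\tilde\eta\big((x\cdot\vec e_1)/\e\big)$, with $\tilde\eta\in C^\infty(\R,\R^k)$ equal to $\varphi^-$ on $(-\infty,-1)$ and to $\varphi^+$ on $(1,+\infty)$. Since $\eta_\e$ and all its derivatives are supported in the $\e$--slab $\{|x\cdot\vec e_1|<\e\}$ off the jump, one gets $\eta_\e\to\varphi_0$ and $\e^j\nabla^j\eta_\e\to 0$ in $L^{p_2}_{loc}$ for $j\geq 1$; hence $g_\e:=\varphi_\e-\eta_\e\to 0$ in $L^{p_2}_{loc}$ while $\e^j\nabla^j g_\e\to 0$ ($1\leq j<n_2$) and $\e^{n_2}\vec Q\cdot\nabla^{n_2}g_\e\to 0$.

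Next, for $0<t<1/2$ with $t$ close to $1/2$ set
$$\psi_{\e,t}(x):=\eta_\e(x)+\Big[\Prod_{j=1}^N l\big((x_j-t)/\e\big)\,l\big(-(x_j+t)/\e\big)\Big]\,g_\e(x),$$
with $l$ the cut-off of \er{L2009smooth1new}. Then $\psi_{\e,t}=\varphi_\e$ on the central cube $\{|x_j|<t\ \forall j\}$, $\psi_{\e,t}=\eta_\e$ off its $\e$--collar, and since $\eta_\e$ is transversally constant and equals $\varphi^\pm$ whenever $|x\cdot\vec e_1|\geq 1/2$, its periodic extension lies in $\mathcal{S}^{(n_2)}(\varphi^+,\varphi^-,I_{\vec\nu})$; moreover $\psi_{\e,t}\to\varphi_0$ and $\e^j\nabla^j\psi_{\e,t}\to 0$, $\e^{n_2}\vec Q\cdot\nabla^{n_2}\psi_{\e,t}\to 0$, uniformly in $t$, the cut-offs multiplying $L^{p_2}$--norms only by a bounded constant exactly as in \er{ffyfyguihihiuiolkkkkjjjkjkjknew}. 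The companion $f_\e$ must have $\vec B\cdot\nabla f_\e=\vec B\cdot\nabla w_\e$ on the central cube, $\{\vec B\cdot\nabla f_\e\}\to W_0$, and a vanishing contribution off the central cube; as in Lemma~\ref{L2009.02newgen} this is achieved by \emph{rescaling rather than cutting} the first block: one contracts the central cube onto $I_{\vec\nu}$, so that the modification of $w_\e$ amounts to a dilation (together with replacing $W_0$ off the slab), and no derivative of $\vec B\cdot\nabla w_\e$ is ever multiplied by a cut-off.

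The energy estimate is then the Fubini argument of Lemma~\ref{L2009.02newgen}: integrating over $t\in(0,\tau)$ the contribution of the $\e$--collar of the central cube, changing variables, and using that the $\psi_{\e,\cdot}$--data tend to $(0,\dots,0,\varphi_0)$ uniformly in the parameter, that after the modification the first block tends to $(0,\dots,0,W_0)$ there, that $F\big(\{0,\dots,0,W_0\},\{0,\dots,0,\varphi_0\}\big)=0$, and the polynomial bound with dominated convergence, one shows this contribution tends to $0$; a diagonalisation produces $t_\e\uparrow 1/2$ for which the whole of $I_{\vec\nu}$ minus the central cube contributes $o(1)$ to the energy of $(f_\e,\psi_\e):=(f_{\e,t_\e},\psi_{\e,t_\e})$. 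Since $F\geq 0$, the remaining (central--cube) part is, after undoing the dilation (Jacobian factor $\to 1$), at most the full left--hand energy of $(w_\e,\varphi_\e)$. Combining these gives \er{ggfghjjhfhfjfhjkkknew}, and all the asserted convergence properties of $f_\e,\psi_\e$ are read off from the construction.

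The main obstacle is precisely the first block: no coercivity is assumed, so $\e^{j}\nabla^{j}\{\vec B\cdot\nabla w_\e\}$ is uncontrolled, and one therefore cannot glue $w_\e$ to a reference by an ordinary smooth cut-off, because the Leibniz term $(\text{cut-off})\cdot\e^{n_1}\nabla^{n_1}\{\vec B\cdot\nabla w_\e\}$ would survive. The resolution, as in Lemma~\ref{L2009.02newgen} (and in the passage from Lemma~\ref{L2009.02new} to it), is to treat the entire first block, together with $\eta_\e$ and its $\e$--scaled derivatives, as inert "carry-along" data that is only dilated, so that in the transition region one is reduced to a statement about the remainder $g_\e$ alone --- whose scaled derivatives do tend to $0$ --- while the transversal periodicity needed for membership in $\mathcal{S}^{(n_2)}$ becomes automatic on the dilated cube, the carry-along data there having already been replaced by the transversally constant $\eta_\e$ and $W_0$. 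Carefully verifying that the dilation, the replacement off the slab, and the cut-off in the $g_\e$--variable can be carried out simultaneously while keeping $\{\vec B\cdot\nabla f_\e\}\in W^{n_1,p_1}_{loc}$ and $\psi_\e\in\mathcal{S}^{(n_2)}$ is the technical heart of the argument.
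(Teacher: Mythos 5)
Your proposal is correct and follows essentially the same route as the paper: the paper likewise splits $\varphi_\e=\lambda_\e+\theta_\e$ with $\lambda_\e(x)=\lambda(x/\e)$ a rescaled one-dimensional profile in $\mathcal{S}^{(n_2)}$ (your $\eta_\e$) and $\theta_\e\to 0$ (your $g_\e$), then feeds $\theta_\e$ into Lemma~\ref{L2009.02newgen} — whose box–cutoff, Fubini averaging over the cutoff location, and dilation $x\mapsto r_\e x$ with the first block and the $\lambda_\e$-data treated as inert carry-along $m_\e$ are exactly the mechanism you reproduce inline — and finally sets $\psi_\e=\bar\lambda_\e+\bar\theta_\e$, $f_\e(x)=w_{r_\e\e}(r_\e x)/r_\e$. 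Your identification of the first block's lack of coercivity as the reason it must be dilated rather than cut off is precisely the point of the paper's construction.
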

\begin{proof}
Consider a function
$\lambda(x)\in\mathcal{S}^{(n_2)}\big(\varphi^+,\varphi^-,I_{\vec\nu}\big)$,
such that $\lambda(x)\equiv l_0(\vec\nu\cdot x)$ for some function
$l_0$ (i.e. $\lambda(x)$ depends actually only on the first real
variable in the base $\{\vec\nu_1,\ldots,\vec\nu_N\}$), and set
$\lambda_\e(x):=\lambda\big(x/\e\big)$. Then clearly
$\lambda_\e(x)\to \varphi_0(x)$ in $L^{p_2}_{loc}(\R^N,\R^k)$.
Moreover clearly $\e^j\nabla^j\lambda_\e(x)\to 0$ as $\e\to 0^+$ in
$L^{p_2}_{loc}(\R^N,\R^{k\times N^j})$ for every
$j\in\{1,\ldots,n_2\}$ and $\e^j\nabla^j\lambda_\e(x)$ is bounded in
$L^\infty$ for every $j\in\{0,1,\ldots,n_2\}$. Next define
$\theta_\e(x):=\varphi_\e(x)-\lambda_\e(x)$
Then
clearly
$\,\lim_{\e\to 0^+}\theta_\e=0$ in $L^{p_2}_{loc}(I_{\vec
\nu},\R^k)$,
$\lim_{\e\to 0^+}\big(\e^{n_2}\,\vec Q\cdot\{\nabla^{n_2}
\theta_\e\}\big)=0$ in $L^{p_2}_{loc}(I_{\vec \nu},\R^{l})$,
and for every $j=1,\ldots, (n_2-1)$ we have $\lim_{\e\to
0^+}\big(\e^j\,\nabla^j \theta_\e\big)=0$ in $L^{p_2}_{loc}(I_{\vec
\nu},\R^{k\times N^j})$.
Then by Lemma \ref{L2009.02newgen} there exist
$\big\{r_\e\big\}_{0<\e<1}\subset(0,1)$ and $\bar\theta_\e(x)\in
C^\infty_c\big(I_{\vec\nu},\R^m\big)$,
such that $\lim_{\e\to
0^+}r_\e=1$,
$\,\lim_{\e\to 0^+}\bar\theta_\e=0$ in $L^{p_2}(I_{\vec \nu},\R^k)$,
$\lim_{\e\to 0^+}\big(\e^{n_2}\,\vec Q\cdot\{\nabla^{n_2}
\bar\theta_\e\}\big)=0$ in $L^{p_2}(I_{\vec \nu},\R^{l})$,
for every $j=1,\ldots, (n_2-1)$ we have $\lim_{\e\to
0^+}\big(\e^j\,\nabla^j \bar\theta_\e\big)=0$ in $L^{p_2}(I_{\vec
\nu},\R^{k\times N^j})$,
and
\begin{multline}\label{ggfghjjhfhfjfhjkkknewggkgkggghjjjh}
\liminf_{\e\to 0^+}\int_{I_{\vec \nu}}\frac{1}{\e}
F\Bigg(\Big\{\e^{n_1}\nabla^{n_1} \{\vec B\cdot\nabla
w_{\e}\},\ldots,\e\nabla\{\vec B\cdot\nabla w_{\e}\},\{\vec
B\cdot\nabla w_{\e}\}\Big\},
\Big\{\e^{n_2}\vec Q\cdot\nabla^{n_2}
\varphi_{\e},\e^{n_2-1}\nabla^{n_2-1}
\varphi_{\e},\ldots,\varphi_{\e}\Big\}\Bigg)dx=\\
\liminf_{\e\to 0^+}\int_{I_{\vec \nu}}\frac{1}{\e}
F\Bigg(\Big\{\e^{n_1}\nabla^{n_1} \{\vec B\cdot\nabla
w_{\e}\},\ldots,\e\nabla\{\vec B\cdot\nabla
w_{\e}\},\{\vec B\cdot\nabla w_{\e}\}\Big\}\,,\\
\Big\{\e^{n_2}\vec Q\cdot\nabla^{n_2}
\big(\lambda_{\e}+\theta_\e\big),\e^{n_2-1}\nabla^{n_2-1}
\big(\lambda_{\e}+\theta_\e\big),\ldots,\big(\lambda_{\e}+\theta_\e\big)\Big\}\Bigg)dx\geq
\\
\liminf_{\e\to 0^+}\int_{I_{\vec \nu}}\frac{1}{\e}
F\Bigg(\bigg\{\e^{n_1}\nabla^{n_1} \{\vec B\cdot\nabla
f_{\e}\},\ldots,\e\nabla\{\vec B\cdot\nabla
f_{\e}\},\{\vec B\cdot\nabla f_{\e}\}\bigg\}\,,\\
\bigg\{\e^{n_2}\vec Q\cdot\nabla^{n_2}
\big(\bar\lambda_{\e}+\bar\theta_\e\big),\e^{n_2-1}\nabla^{n_2-1}
\big(\bar\lambda_{\e}+\bar\theta_\e\big),\ldots,\big(\bar\lambda_{\e}+\bar\theta_\e\big)\bigg\}\Bigg)dx,
\end{multline}
where $f_\e(x):=w_{r_\e\e}(r_\e x)/r_\e$
and $\bar\lambda_\e(x):=\lambda_{r_\e\e}(r_\e x)$. Moreover, by the
same Lemma, $\lim_{\e\to 0^+}\{\vec B\cdot \nabla f_\e\}=W_0$ in
$L^{p_1}(I_{\vec \nu},\R^m)$.
On
the other hand clearly
$\bar\lambda_\e(x)\in\mathcal{S}^{(n_2)}\big(\varphi^+,\varphi^-,I_{\vec\nu}\big)$
for
$\e>0$ sufficiently small.
Thus, since $\bar\theta_\e(x)\in
C^\infty_c\big(I_{\vec\nu},\R^m\big)$,
we have
$\psi_\e(x)\in\mathcal{S}^{(n_2)}\big(\varphi^+,\varphi^-,I_{\vec\nu}\big)$
where $\psi_\e(x)\equiv \bar\lambda_{\e}(x)+\bar\theta_\e(x)$ for
every $x\in I_{\vec\nu}$.
So by \er{ggfghjjhfhfjfhjkkknewggkgkggghjjjh} we deduce
\er{ggfghjjhfhfjfhjkkknew}. On the other hand since $r_\e\to 1^-$ we
easily obtain $\lim_{\e\to 0^+}\psi_\e=\varphi_0$ in
$L^{p_2}(I_{\vec \nu},\R^k)$,
$\lim_{\e\to 0^+}\big(\e^{n_2}\,\vec Q\cdot\{\nabla^{n_2}
\psi_\e\}\big)=0$ in $L^{p_2}(I_{\vec \nu},\R^{l})$,
%
%
%
and for every $j=1,\ldots, (n_2-1)$ we have $\lim_{\e\to
0^+}\big(\e^j\,\nabla^j \psi_\e\big)=0$ in $L^{p_3}(I_{\vec
\nu},\R^{k\times N^j})$.
This completes the proof.
\end{proof}

Next we have the following simple Lemma.
\begin{lemma}\label{nfjghfighfihjtfohjt}
Let $n_0\in\mathbb{N}$ and $\big\{\varphi_\e(x)\big\}_{\e>0}\subset
W^{n_0,p}_{loc}\big(I_{\vec\nu},\R^m\big)$ be such that
$\e^{n_0}\nabla^{n_0} \varphi_\e\to 0$ in
$L^p_{loc}\big(I_{\vec\nu},\R^{m\times N^{n_0}}\big)$ and the
sequence $\big\{\varphi_\e(x)\big\}_{\e>0}$ is bounded in
$L^p_{loc}\big(I_{\vec\nu},\R^{m}\big)$ i.e. for every compactly
embedded open set $G\subset\subset I_{\vec\nu}$ there exists a
constant $\bar C:=\bar C(G)>0$ such that $\int_{G}|\varphi_\e|^p
dx\leq \bar C$. Then for every $j\in\{1,
\ldots,n_0\}$ we have
$\e^{j}\nabla^{j} \varphi_\e\to 0$ in
$L^p_{loc}\big(I_{\vec\nu},\R^{m\times N^{j}}\big)$.
\end{lemma}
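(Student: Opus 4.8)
The plan is to prove this interpolation-type lemma by reducing to a standard Gagliardo–Nirenberg interpolation inequality for intermediate derivatives, exactly in the spirit of the argument already used in Theorem \ref{L2009.02kkkjkhkjh} (where Theorem 7.28 from \cite{gt} was invoked). The statement is local, so I would first reduce to an arbitrary fixed compactly embedded domain $G\subset\subset I_{\vec\nu}$ with smooth boundary; it suffices to prove $\e^j\nabla^j\varphi_\e\to 0$ in $L^p(G,\R^{m\times N^j})$ for each such $G$ and each $j\in\{1,\ldots,n_0\}$, since $L^p_{loc}$ convergence is precisely convergence on all such $G$.

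On a fixed $G$, I would recall the interpolation inequality (Theorem 7.28 in \cite{gt}): there is a constant $C_0=C_0(G,p,n_0)$ such that for every $\sigma\in W^{n_0,p}(G,\R)$ and every $\tau>0$,
\begin{equation*}
\big\|\nabla^j\sigma\big\|_{L^p(G)}\leq \tau\big\|\sigma\big\|_{W^{n_0,p}(G)}+C_0\tau^{-j/(n_0-j)}\big\|\sigma\big\|_{L^p(G)}\qquad 1\leq j<n_0,
\end{equation*}
from which one deduces, by the same rescaling trick as in the proof of Theorem \ref{L2009.02kkkjkhkjh}, a constant $C_1=C_1(G,p,n_0)$ with
\begin{equation*}
\big\|\tau^j\nabla^j\sigma\big\|_{L^p(G)}\leq \big\|\tau^{n_0}\nabla^{n_0}\sigma\big\|_{L^p(G)}+C_1\big\|\sigma\big\|_{L^p(G)}\qquad\text{for all }\tau\in(0,1),\ 1\leq j<n_0.
\end{equation*}
Applying this componentwise to $\varphi_\e$ with the scale $\tau:=\e\,(d_\e)^{-1/n_0}$, where $d_\e:=\int_G|\e^{n_0}\nabla^{n_0}\varphi_\e|^p\,dx\to 0$, and using the boundedness $\int_G|\varphi_\e|^p\,dx\leq\bar C$, one gets $\|\e^j\nabla^j\varphi_\e\|_{L^p(G)}\leq \widehat C\, d_\e^{j/n_0}$ for $1\leq j<n_0$, with $\widehat C$ independent of $\e$; for $j=n_0$ the conclusion is the hypothesis itself. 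Letting $\e\to 0^+$ gives $\e^j\nabla^j\varphi_\e\to 0$ in $L^p(G)$, and since $G\subset\subset I_{\vec\nu}$ was arbitrary the lemma follows.

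**The one point needing care** (and the only mild obstacle) is the scaling bookkeeping that turns the weighted-norm form of the interpolation inequality into the estimate $\|\e^j\nabla^j\varphi_\e\|_{L^p(G)}\leq\widehat C\,d_\e^{j/n_0}$: one must verify that $\tau=\e\,(d_\e)^{-1/n_0}$ indeed lies in $(0,1)$ for small $\e$ (which holds since $d_\e\to 0$ forces $(d_\e)^{-1/n_0}\to\infty$, but one should first dispose of the trivial case $d_\e=0$, and note that if $d_\e$ fails to tend to $0$ through a subsequence there is nothing to prove along it), and that the term $\|\tau^{n_0}\nabla^{n_0}\varphi_\e\|_{L^p(G)}=(d_\e)^{-1}\cdot\|\e^{n_0}\nabla^{n_0}\varphi_\e\|_{L^p(G)}\cdot d_\e = d_\e^{(n_0-1)/n_0}\to 0$ after multiplying the whole inequality by $\e^j=\tau^j d_\e^{j/n_0}$. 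This is entirely routine; no genuine difficulty arises, which is why the lemma is stated without proof or with only a brief indication — it is essentially a self-contained restatement of a step already performed in the proof of Theorem \ref{L2009.02kkkjkhkjh}.
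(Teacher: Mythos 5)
Your proposal follows the paper's proof essentially verbatim: reduce to a smooth compactly embedded domain, invoke Theorem 7.28 of \cite{gt}, pass to the weighted form of the interpolation inequality, and choose $\tau=\e\,(d_\e)^{-1/n_0}$. The only slip is that you define $d_\e$ as $\int_G|\e^{n_0}\nabla^{n_0}\varphi_\e|^p\,dx$ rather than as the norm $\|\e^{n_0}\nabla^{n_0}\varphi_\e\|_{L^p(G)}$ (the paper's choice), with which your term $\|\tau^{n_0}\nabla^{n_0}\varphi_\e\|_{L^p(G)}$ would equal $d_\e^{1/p-1}$ and blow up for $p>1$; taking $d_\e$ to be the norm makes that term identically $1$ and the argument closes exactly as in the paper.
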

\begin{proof}
Indeed fix an arbitrary domain $U\subset\subset I_{\vec \nu}$ with a
smooth boundary. Then clearly
\begin{equation}\label{gfjfhjfgjhfhkdrydsgbnvfjggyhggghfgfgdfdddrrdnewllkk}
d_e:=
\big\|\e^{n_0}\,\nabla^{n_0} \varphi_\e\big\|_{L^p(U)} \to
0\quad\text{as}\;\;\e\to 0^+\,.
\end{equation}
Moreover, clearly there exists $\bar C>0$ such that
$\int_{U}|\varphi_\e|^p dx\leq \bar C$. On the other hand, by
Theorem 7.28 in \cite{gt} there exists $C_0>0$, which depends only
on $U$ $p$ and $n_0$, such that for every $\sigma(x)\in
W^{n_0,p}(U,\R^m)$ and every $\tau>0$ we have
\begin{equation}\label{gfjfhjfgjhfhkdrydsgnewllkk}
\big\|\nabla^j\sigma(x)\big\|_{L^p(U)}\leq \tau
\big\|\sigma(x)\big\|_{W^{n,p}(U)}+C_0\tau^{-j/(n-j)}\big\|\sigma(x)\big\|_{L^p(U)}\quad\quad\forall\;
2\leq n\leq n_0\,,\;\;1\leq j<n\,.
\end{equation}
Thus in particular we deduce from \er{gfjfhjfgjhfhkdrydsgnewllkk}
that there exists $C_1>0$, which depends only on $U$ $p$ and $n_0$,
such that for every $\sigma(x)\in W^{n_0,p}(U,\R^m)$ and every
$\tau\in (0,1)$ we have
\begin{equation}\label{gfjfhjfgjhfhkdrydsghgghgfgffgfggnewllkk}
\big\|\tau^j\nabla^j\sigma(x)\big\|_{L^p(U)}\leq
\big\|\tau^{n_0}\nabla^{n_0}\sigma(x)\big\|_{L^p(U)}+C_1\big\|\sigma(x)\big\|_{L^p(U)}\quad\quad\forall\;
1\leq j<n_0\,.
\end{equation}
Then setting $\tau:=\e\cdot(d_\e)^{-1/n_0}$, where $d_\e$ is defined
by \er{gfjfhjfgjhfhkdrydsgbnvfjggyhggghfgfgdfdddrrdnewllkk}, using
\er{gfjfhjfgjhfhkdrydsghgghgfgffgfggnewllkk} and the fact that
$\int_{U}|\varphi_\e|^pdx\leq \bar C$ we obtain
\begin{equation}\label{gfjfhjfgjhfhkdrydsghgghgfgffgfggjhhgkhhhlllhhljjggjkgkjknewllkk}
\big\|\e^j\nabla^j\varphi_\e(x)\big\|_{L^p(U)}\leq \hat C
d_\e^{j/n_0}\quad\quad\forall\; 1\leq j<n_0\,,
\end{equation}
where $\hat C>0$ dose not depend on $\e$. Thus using
\er{gfjfhjfgjhfhkdrydsgbnvfjggyhggghfgfgdfdddrrdnewllkk} we deduce
\begin{equation*}
\big\|\e^j\nabla^j\varphi_\e(x)\big\|_{L^p(U)}\to
0\quad\text{as}\;\;\e\to 0^+\quad\forall\; 1\leq j<n_0\,,
\end{equation*}
Therefore, since the domain with a smooth boundary $U\subset\subset
I_{\vec \nu}$ was chosen arbitrary, we  finally deduce that
$\lim_{\e\to 0^+}\big(\e^{j}\,\nabla^{j} \varphi_\e\big)=0$ in
$L^p_{loc}(I_{\vec \nu},\R^{m\times N^{j}})$ for every
$j\in\{1,
\ldots,n_0\}$. This completes the proof.
\end{proof}

Plugging Lemma \ref{nfjghfighfihjtfohjt} with the  particular case
of Proposition \ref{L2009.02kkknew} we get the following Theorem.
\begin{theorem}\label{L2009.02kkkjkhkjhnew}
Let $n_1,n_2\in \mathbb{N}$. Consider the linear operator
$\vec Q\in\mathcal{L}(\R^{m\times N^{n_2}},\R^{l})$, which satisfies
\begin{itemize}
\item
either $\vec Q=Id$
\item
or $n_2=1$.
\end{itemize}
Next
assume that $F$ is a continuous function, defined on
$$\Big\{\R^{k\times N^{n_1+1}}\times\ldots\times\R^{k\times
N^2}\times\R^{k\times N}\Big\}\times\Big\{\R^{d\times
N^{n_1+1}}\times\ldots\times\R^{d\times N^2}\times\R^{d\times
N}\Big\}\times
\Big\{\R^{l}\times\R^{m\times
N^{n_2-1}}\times\ldots\times\R^{m\times N}\times\R^m\Big\},$$ taking
values in $\R$ and satisfying $F\geq 0$. Moreover assume that there
exist $C>0$ and $p_1\geq 1$, $p_2\geq 1$ satisfying
\begin{multline}\label{hgdfvdhvdhfvnew}
\frac{|c_1|^{p_2}}{C}\leq F\Big(\{a_1,\ldots,
a_{n_1+1}\},\{b_1,\ldots, b_{n_1+1}\},\{c_1,\ldots,c_{n_2+1}\}
\Big)\leq
C\bigg(\sum_{j=1}^{n_1+1}|a_j|^{p_1}+\sum_{j=1}^{n_1+1}|b_j|^{p_1}+\sum_{j=1}^{n_2+1}|c_j|^{p_2}
+1\bigg)\\ \text{for every}\;\; \Big(\{a_1,\ldots,
a_{n_1+1}\},\{b_1,\ldots,b_{n_1+1}\},\{c_1,\ldots,c_{n_2+1}\}
\Big).
\end{multline}
Furthermore let
$\vec\nu\in S^{N-1}$, $\varphi^+,\varphi^-\in\R^m$,
$V^+,V^-\in\R^{k\times N}$ and $W^+,W^-\in \R^{d\times N}$ be such
that if we set
\begin{equation*}
\varphi_0(x): =\begin{cases}
\varphi^+\;\;\text{if}\;\;x\cdot\vec\nu>0,\\
\varphi^-\;\;\text{if}\;\;x\cdot\vec\nu<0,
\end{cases}
\; V_0(x): =\begin{cases}
V^+\;\;\text{if}\;\;x\cdot\vec\nu>0,\\
V^-\;\;\text{if}\;\;x\cdot\vec\nu<0,
\end{cases}
\;\text{and}\quad\; W_0(x): =\begin{cases}
W^+\;\;\text{if}\;\;x\cdot\vec\nu>0,\\
W^-\;\;\text{if}\;\;x\cdot\vec\nu<0,
\end{cases}
\end{equation*}
then
\begin{equation*}
F\Big(\big\{0,0,\ldots,V_0(x)\big\}, \big\{0,0,\ldots,W_0(x)\big\},
\big\{0,0,\ldots,\varphi_0(x)\big\}\Big)=0\quad\text{for
a.e.}\;x\in\R^N\,.
\end{equation*}
Next
consider $\big\{v_\e(x)\big\}_{0<\e<1}\subset
W^{n_1+1,p_1}_{loc}(I_{\vec \nu},\R^k)$,
$\big\{m_\e(x)\big\}_{0<\e<1}\subset
W^{n_1,p_1}_{loc}(I_{\vec \nu},\R^{d\times N})$
and $\big\{\varphi_\e(x)\big\}_{0<\e<1}\subset
W^{n_2,p_2}_{loc}(I_{\vec \nu},\R^m)$, such that $\,\Div m_\e\equiv
0$, $\,\lim_{\e\to 0^+}\varphi_\e=\varphi_0$ in
$L^{p_2}_{loc}(I_{\vec \nu},\R^k)$, $\,\lim_{\e\to 0^+}m_\e=W_0$ in
$L^{p_1}_{loc}(I_{\vec \nu},\R^{d\times N})$ and $\lim_{\e\to
0^+}\nabla v_\e=V_0$ in $W^{1,p_1}_{loc}(I_{\vec \nu},\R^k)$.
Here, as before, $I_{\vec \nu}:=\{y\in\R^N:\;|y\cdot
\vec\nu_j|<1/2\;\;\;\forall j=1,\ldots, N\}$ where
$\{\vec\nu_1,\ldots,\vec\nu_N\}\subset\R^N$ is an orthonormal base
in $\R^N$ such that $\vec\nu_1:=\vec \nu$.
Then, there exist $\big\{\psi_\e(x)\big\}_{0<\e<1}\subset
\mathcal{S}^{(n_2)}\big(\varphi^+,\varphi^-,I_{\vec\nu}\big)$,
$\big\{h_\e(x)\big\}_{0<\e<1}\subset
W^{n_1,p_1}(I_{\vec \nu},\R^{d\times N})$
and $\big\{u_\e(x)\big\}_{0<\e<1}\subset
W^{n_1+1,p_1}(I_{\vec \nu},\R^k)$,
where
\begin{multline*}
\mathcal{S}^{(n)}\big(\varphi^+,\varphi^-,I_{\vec\nu}\big):=
\bigg\{\zeta\in
C^n(\R^N,\R^m):\;\zeta(y)=\varphi_0(y)\;\,\text{if}\;\,|y\cdot\vec\nu|\geq
1/2,\;\,\text{and}\;\,\zeta\big(y+\vec \nu_j\big)=\zeta(y)\;\forall
j=2,\ldots, N\bigg\},
\end{multline*}
such that such that $\,\Div h_\e\equiv 0$, $\,\lim_{\e\to
0^+}\psi_\e=\varphi_0$ in $L^{p_2}(I_{\vec \nu},\R^m)$,
$\,\lim_{\e\to 0^+}h_\e=W_0$ in $L^{p_1}(I_{\vec \nu},\R^{d\times
N})$,
$\lim_{\e\to 0^+}\nabla u_\e=V_0$ in $W^{1,p_1}(I_{\vec \nu},\R^k)$,
and
\begin{multline*}
\liminf_{\e\to 0^+}\int_{I_{\vec \nu}}\frac{1}{\e}
F\Bigg(\Big\{\e^{n_1}\nabla^{n_1+1}v_{\e},\ldots,\e\nabla^2
v_{\e},\nabla v_{\e}\Big\},\,
\Big\{\e^{n_1}\nabla^{n_1}m_{\e},\ldots,\e\nabla m_{\e},
m_{\e}\Big\},\\
\Big\{\e^{n_2}\vec Q\cdot\nabla^{n_2}
\varphi_{\e},\e^{n_2-1}\nabla^{n_2-1}
\varphi_{\e},\ldots,\varphi_{\e}\Big\}\Bigg)dx \geq\\ \liminf_{\e\to
0^+}\int_{I_{\vec \nu}}\frac{1}{\e}
F\Bigg(\Big\{\e^{n_1}\nabla^{n_1+1}u_{\e},\ldots,\e\nabla^2
u_{\e},\nabla
u_{\e}\Big\},\,\Big\{\e^{n_1}\nabla^{n_1}h_{\e},\ldots,\e\nabla
h_{\e}, h_{\e}\Big\},\\
\Big\{\e^{n_2}\vec Q\cdot\nabla^{n_2}
\psi_{\e},\e^{n_2-1}\nabla^{n_2-1}
\psi_{\e},\ldots,\psi_{\e}\Big\}\Bigg)dx.
\end{multline*}
\end{theorem}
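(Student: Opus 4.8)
The plan is to reduce the full statement to Proposition \ref{L2009.02kkknew} by repackaging the three sequences $\{v_\e\},\{m_\e\},\{\varphi_\e\}$ into the framework of that Proposition, after first using Lemma \ref{nfjghfighfihjtfohjt} to upgrade the hypotheses on the intermediate derivatives. The key observation is that Proposition \ref{L2009.02kkknew} is stated for \emph{two} blocks: one block $\{\vec B\cdot\nabla w_\e\}$ whose derivatives up to order $n_1$ appear in $F$ with the coercivity-free growth bound, and a second block $\varphi_\e$ carrying the coercive variable. So the first step will be to merge the $v_\e$-block and the $m_\e$-block into a single $\R^{(k\cdot N)+(d\times N)}$-valued block. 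Concretely, I would set $d':=k\cdot N+d$ and define $w_\e:\O\to\R^{d'\times N}$ (or rather choose a potential realizing it) so that $\nabla w_\e$ encodes both $\nabla v_\e$ and $m_\e$; since $m_\e$ is only $\Div$-free and not itself a gradient, I would use the same device as in the proof of Theorem \ref{dehgfrygfrgygenjklhhj} — introduce the primitive $M_\e$ with $\partial M_\e/\partial x_1 = m'_\e$ and $-\Div_{x'}M_\e = m_{\e,1}$ — so that $m_\e$ becomes $\vec B\cdot\nabla(\text{something})$ for an appropriate fixed linear $\vec B$, and the divergence-free constraint is automatically built in. Thus the pair $(v_\e, M_\e)$ plays the role of $w_\e$, and $\vec B$ is the fixed operator recovering $(\nabla v_\e, m_\e)$ from their (higher-order) derivatives.

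Next I would apply Lemma \ref{nfjghfighfihjtfohjt} to promote the convergence hypotheses. We are given only $\nabla v_\e\to V_0$ in $W^{1,p_1}_{loc}$ (so $\e\nabla^2 v_\e$ is controlled but the intermediate $\e^j\nabla^{j+1}v_\e$ for $2\le j\le n_1$ are not a priori small), $m_\e\to W_0$ in $L^{p_1}_{loc}$, and $\varphi_\e\to\varphi_0$ in $L^{p_2}_{loc}$. The point is that we may assume the energy $\liminf_\e \frac1\e\int_{I_{\vec\nu}}F(\dots)dx$ is finite (otherwise the inequality is trivial), and then the upper growth bound in \er{hgdfvdhvdhfvnew} forces $\e^{n_1}\nabla^{n_1+1}v_\e\to0$, $\e^{n_1}\nabla^{n_1}m_\e\to0$, and (via the two-case hypothesis on $\vec Q$: either $\vec Q=\mathrm{Id}$ so $\e^{n_2}\nabla^{n_2}\varphi_\e\to0$, or $n_2=1$ and there is nothing to interpolate) $\e^{n_2}\vec Q\cdot\nabla^{n_2}\varphi_\e\to 0$, all in $L^{p}_{loc}$. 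Since these sequences are bounded in $L^p_{loc}$ (again by the energy bound and the $L^p$ convergence of the lowest-order terms), Lemma \ref{nfjghfighfihjtfohjt} then yields $\e^j\nabla^{j+1}v_\e\to0$, $\e^j\nabla^j m_\e\to0$ for all $1\le j\le n_1$ and, in the case $\vec Q=\mathrm{Id}$, $\e^j\nabla^j\varphi_\e\to0$ for all $1\le j\le n_2-1$; in the case $n_2=1$ this last family is empty. These are precisely the hypotheses on the intermediate derivatives required by Proposition \ref{L2009.02kkknew}.

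With the hypotheses of Proposition \ref{L2009.02kkknew} verified for the repackaged data — $n_1$ replaced by $n_1$, $n_2$ by $n_2$, $w_\e=(v_\e,M_\e)$, $\vec B$ the fixed reconstruction operator, $\varphi_\e$ as is, $W_0=(V_0,W_0)$ and $\varphi^\pm$ as given — that Proposition directly produces sequences $f_\e$ (of the merged type, which unpacks into $u_\e$ with $\nabla u_\e\to V_0$ and $h_\e$ with $h_\e\to W_0$, $\Div h_\e\equiv 0$ since the divergence constraint was encoded structurally) and $\psi_\e\in\mathcal S^{(n_2)}(\varphi^+,\varphi^-,I_{\vec\nu})$ satisfying exactly the energy inequality \er{ggfghjjhfhfjfhjkkknew}, which is the asserted conclusion once one translates the block notation back. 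The transcription of $F$ between the merged-block form and the $(v,m,\varphi)$-form is just a relabeling of arguments and does not affect continuity, nonnegativity, or the growth bound \er{hgdfvdhvdhfvnew}, whose coercive term $|c_1|^{p_2}/C$ passes through unchanged.

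The main obstacle I expect is the bookkeeping in the merge step: one must check that the linear operator $\vec B$ reconstructing $(\nabla v_\e,m_\e)$ from derivatives of $(v_\e,M_\e)$ is genuinely fixed (independent of $\e$), that $\Div m_\e\equiv0$ translates into the structural identity $-\Div_{x'}M_\e=m_{\e,1}$, $\partial_{x_1}M_\e=m'_\e$ exactly as in Theorem \ref{dehgfrygfrgygenjklhhj}, and — most delicately — that the growth hypothesis \er{hgdfvdhvdhfvnew}, which in the merged form must bound $F$ in terms of the derivatives of the \emph{potentials} $(v_\e,M_\e)$ rather than of $(\nabla v_\e,m_\e)$, still has the right exponents and the right coercive lower bound; here one uses that $\vec B$ being a fixed bounded operator makes $|\vec B\cdot\nabla^{j+1}(v,M)|\le \|\vec B\|\,|\nabla^{j+1}(v,M)|$ and conversely the relevant components are recovered, so the two-sided bounds are equivalent up to the constant $C$. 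Everything else is a routine application of the already-proved Lemma \ref{nfjghfighfihjtfohjt} and Proposition \ref{L2009.02kkknew}.
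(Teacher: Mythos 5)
Your route is essentially the paper's: assume the energy is finite, use the coercivity in \er{hgdfvdhvdhfvnew} to force the top‑order $\varphi$‑derivative to vanish, invoke Lemma \ref{nfjghfighfihjtfohjt} together with the dichotomy ``$\vec Q=Id$ or $n_2=1$'' to kill the intermediate $\varphi$‑derivatives, encode $(\nabla v_\e,m_\e)$ as $\vec B\cdot\nabla(v_\e,M_\e)$ via the potential $M_\e$ exactly as in the proof of Theorem \ref{dehgfrygfrgygenjklhhj}, and then apply Proposition \ref{L2009.02kkknew}.

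One step of your write‑up is wrong, though it happens to be dispensable. You claim that the growth bound \er{hgdfvdhvdhfvnew} together with finiteness of the energy forces $\e^{n_1}\nabla^{n_1+1}v_\e\to 0$ and $\e^{n_1}\nabla^{n_1}m_\e\to 0$, and you then run Lemma \ref{nfjghfighfihjtfohjt} on the $v$‑ and $m$‑blocks as well, asserting that these smallness conditions are ``precisely the hypotheses'' of Proposition \ref{L2009.02kkknew}. Neither half is right: the only coercive term in \er{hgdfvdhvdhfvnew} is $|c_1|^{p_2}/C$, i.e.\ the slot occupied by $\e^{n_2}\vec Q\cdot\nabla^{n_2}\varphi_\e$; an upper growth bound forces nothing to vanish, so finiteness of $\frac1\e\int F$ yields only $\e^{n_2}\vec Q\cdot\nabla^{n_2}\varphi_\e\to 0$ in $L^{p_2}_{loc}$, and no control whatsoever on the higher derivatives of $v_\e$ or $m_\e$. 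Correspondingly, Proposition \ref{L2009.02kkknew} imposes no smallness of $\e^{j}\nabla^{j}\{\vec B\cdot\nabla w_\e\}$ — its only derivative hypotheses concern the $\varphi$‑block, because in its proof the $w$‑block is merely rescaled while the cut‑off surgery of Lemma \ref{L2009.02newgen} is performed on $\varphi_\e$ alone. So delete the claims about the $v$‑ and $m$‑derivatives; what remains (the $\varphi$‑block deductions plus the merge) is exactly what the paper does and suffices.
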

\begin{proof}
It is clear that without any loss of generality we may assume
\begin{multline}\label{ggfghjjhfhfjfhjkkkhjkgghghhjhhbbhjjhbnew}
\liminf_{\e\to 0^+}\int_{I_{\vec \nu}}\frac{1}{\e}
F\Bigg(\Big\{\e^{n_1}\nabla^{n_1+1}v_{\e},\ldots,\e\nabla^2
v_{\e},\nabla v_{\e}\Big\},\,
\Big\{\e^{n_1}\nabla^{n_1}m_{\e},\ldots,\e\nabla m_{\e},
m_{\e}\Big\},\\
\Big\{\e^{n_2}\vec Q\cdot\nabla^{n_2}
\varphi_{\e},\e^{n_2-1}\nabla^{n_2-1}
\varphi_{\e},\ldots,\varphi_{\e}\Big\}\Bigg)dx<+\infty.
\end{multline}
Then by \er{hgdfvdhvdhfvnew} and
\er{ggfghjjhfhfjfhjkkkhjkgghghhjhhbbhjjhbnew}
we deduce that $\lim_{\e\to 0^+}\big(\e^{n_2}\,\vec
Q\cdot\{\nabla^{n_2} \varphi_\e\}\big)=0$ in $L^{p_2}_{loc}(I_{\vec
\nu},\R^{l})$. Next remember we assumed that
\begin{itemize}
\item
either $\vec Q=Id$
\item
or $n_2=1$.
\end{itemize}
So in any case, by Lemma \ref{nfjghfighfihjtfohjt} we deduce that
for every $1\leq j<n_2$ we have $\lim_{\e\to
0^+}\big(\e^{j}\,\nabla^{j} \varphi_\e\big)=0$ in
$L^{p_2}_{loc}(I_{\vec \nu},\R^{m\times N^{j}})$.
Thus, applying Proposition \ref{L2009.02kkknew}
completes the proof.
\end{proof}
Next by the composition of Theorem \ref{L2009.02kkkjkhkjhnew} and
Theorem \ref{dehgfrygfrgygenjklhhj}
we obtain the
following result describing the lower bound for the first order
problems.
\begin{proposition}\label{dehgfrygfrgygenbgggggggggggggkgkgnew}
Let $\O\subset\R^N$ be an open set. Furthermore, let $F\in
C^0\big(\R^{m\times N^n}\times\R^{m\times
N^{(n-1)}}\times\ldots\times\R^{m\times N}\times
\R^m\times\R^N,\R\big)$, be such that $F\geq 0$ and there exist
$g(x)\in C^0\big(\R^N,(0,+\infty)\big)$ and $p\geq 1$ satisfying
\begin{multline}\label{hgdfvdhvdhfvjjjjiiiuyyyjinew}
\frac{1}{g(x)}|A|^p
\leq F\Big(A,a_1,\ldots,a_{n-1},b,x\Big) \leq
g(x)\bigg(|A|^p+\sum_{j=1}^{n-1}|a_j|^{p}+|b|^p+1\bigg)\quad
\text{for every}\;\;\big(A,a_1,a_2,\ldots,a_{n-1},b,x\big) \,.
\end{multline}
Assume also that for every $x_0\in\O$ and every $\tau>0$ there
exists $\alpha>0$ satisfying
\begin{multline}\label{vcjhfjhgjkgkgjgghjfhfhfmjghj}
F\big(a_1,a_2,\ldots, a_n,b,x\big)-F\big(a_1,a_2,\ldots,
a_n,b,x_0\big)\geq -\tau F\big(a_1,a_2,\ldots, a_n,b,x_0\big)\\
\forall\, a_1\in \times\R^{m\times N^n}\; \ldots\;\forall\,
a_n\in\R^{m\times N}\;\;\forall\, b\in\R^{m}\;\;\forall\,
x\in\R^N\;\;\text{such that}\;\;|x-x_0|<\alpha\,.
\end{multline}
Next let $\varphi\in L^p(\O,\R^m)$ be such that $F\big(0,0,\ldots,0,
\varphi(x),x\big)=0$ for a.e. $x\in\O$. Assume also that there exist
a $\mathcal{H}^{N-1}$ $\sigma$-finite Borel set $D\subset\O$ and
three Borel mappings $\,\varphi^+(x):D\to\R^m$,
$\varphi^-(x):D\to\R^m$ and $\vec n(x):D\to S^{N-1}$ such that for
every $x\in D$ we have
\begin{equation}\label{L2009surfhh8128odno888jjjjjkkkkkkgenjnhhhnew}
\lim\limits_{\rho\to 0^+}\frac{\int_{B_\rho^+(x,\vec
n(x))}\big|\varphi(y)-\varphi^+(x)\big|^p\,dy}
{\mathcal{L}^N\big(B_\rho(x)\big)}=0\quad\text{and}\quad
\lim\limits_{\rho\to 0^+}\frac{\int_{B_\rho^-(x,\vec
n(x))}\big|\varphi(y)-\varphi^-(x)\big|^p\,dy}
{\mathcal{L}^N\big(B_\rho(x)\big)}=0\,.
\end{equation}
Then for every
$\{\varphi_\e\}_{\e>0}\subset W^{n,p}_{loc}(\O,\R^m)$ such that
$\varphi_\e\to \varphi$ in $L^p_{loc}(\O,\R^m)$ as $\e\to 0^+$, we
will have
\begin{multline}\label{a1a2a3a4a5a6a7s8hhjhjjhjjjjjjkkkkgenhjhhhhjnew}
\liminf_{\e\to 0^+}\frac{1}{\e}\int_\O F\bigg(\,\e^n\nabla^n
\varphi_\e(x),\,\e^{n-1}\nabla^{n-1}\varphi_\e(x),\,\ldots,\,\e\nabla \varphi_\e(x),\, \varphi_\e(x),\,x\bigg)dx\\
\geq \int_{D}\bar E^{(n)}_{per}\Big(\varphi^+(x),\varphi^-(x),\vec
n(x),x\Big)d \mathcal H^{N-1}(x)\,,
\end{multline}
where
\begin{multline}\label{L2009hhffff12kkkhjhjghghgvgvggcjhggghnew}
\bar E^{(n)}_{per}\Big(\varphi^+,\varphi^-,\vec n,x\Big)\;:=\;\\
\inf\Bigg\{\int_{I_{\vec n}}\frac{1}{L} F\bigg(L^n\,\nabla^n
\zeta(y),\,L^{n-1}\,\nabla^{n-1} \zeta(y),\,\ldots,\,L\,\nabla
\zeta(y),\,\zeta(y),\,x\bigg)\,dy:\;\; L\in(0,+\infty)\,,\;\zeta\in
\mathcal{S}^{(n)}(\varphi^+,\varphi^-,I_{\vec n})\Bigg\}\,,
\end{multline}
with
\begin{multline}\label{L2009Ddef2hhhjjjj77788hhhkkkkllkjjjjkkkhhhhffggdddkkkgjhikhhhjjddddhdkgkkkhgghjhjhjkjnew}
\mathcal{S}^{(n)}\big(\varphi^+,\varphi^-,I_{\vec n}\big):=
\bigg\{\zeta\in C^n(\R^N,\R^m):\;\;\zeta(y)=\varphi^-\;\text{ if }\;y\cdot\vec n\leq-1/2,\\
\zeta(y)=\varphi^+\;\text{ if }\; y\cdot\vec n\geq 1/2\;\text{ and
}\;\zeta\big(y+\vec \nu_j\big)=\zeta(y)\;\;\forall j=2,
\ldots,
N\bigg\}\,,
\end{multline}
Here $I_{\vec n}:=\{y\in\R^N:\;|y\cdot \vec\nu_j|<1/2\;\;\;\forall
j=1,2\ldots N\}$ where
$\{\vec\nu_1,
\ldots,\vec\nu_N\}\subset\R^N$ is an
orthonormal base in $\R^N$ such that $\vec\nu_1:=\vec n$.
\end{proposition}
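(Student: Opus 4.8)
The plan is to deduce Proposition \ref{dehgfrygfrgygenbgggggggggggggkgkgnew} by combining the abstract lower bound of Theorem \ref{dehgfrygfrgygenjklhhj} with the profile-reduction Theorem \ref{L2009.02kkkjkhkjhnew}, exactly as the cross-references in the statement suggest. First I would apply Theorem \ref{dehgfrygfrgygenjklhhj} with $k=d=1$ (or more precisely, with the $v$-slot and $h$-slot of that theorem taken trivial, so only the $\psi$-slot is active), taking $\vec m\equiv 0$, $v\equiv 0$, and $\mathcal{M}=\R^m$. The hypotheses needed there are: $F\geq 0$ (given), the lower-semicontinuity-type condition \er{vcjhfjhgjkgkgjgghjfhfhf}, which is exactly our assumption \er{vcjhfjhgjkgkgjgghjfhfhfmjghj}; and the jump-point structure \er{L2009surfhh8128odno888jjjjjkkkkkkgenhjjhjkj}, which for the $\psi$-part is exactly \er{L2009surfhh8128odno888jjjjjkkkkkkgenjnhhhnew}. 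This yields
$$\liminf_{\e\to 0^+}\frac{1}{\e}\int_\O F\big(\e^n\nabla^n\varphi_\e,\ldots,\e\nabla\varphi_\e,\varphi_\e,x\big)\,dx\;\geq\;\int_D \bar E^{(n)}_0\big(\varphi^+(x),\varphi^-(x),\vec n(x),x\big)\,d\mathcal{H}^{N-1}(x),$$
where $\bar E^{(n)}_0$ is the ``abstract'' cell formula from \er{a1a2a3a4a5a6a7s8hhjhjjhjjjjjjkkkkjgjgjgjhlllllkkkgenhgjkggg} specialized to the $\psi$-slot: it is an infimum over families $\gamma_\e$ converging in $L^p$ to the step function $\gamma(y,\varphi^+,\varphi^-,\vec n)$ of $\liminf_\e \frac1\e\int_{I_{\vec n}}F(\e^n\nabla^n\gamma_\e,\ldots,\e\nabla\gamma_\e,\gamma_\e,x)\,dy$, with $\Div_y\theta_\e=0$ automatically satisfied since $\theta_\e\equiv 0$.

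The second step is to show that for $\mathcal{H}^{N-1}$-a.e.\ $x\in D$ one has $\bar E^{(n)}_0(\varphi^+,\varphi^-,\vec n,x)\geq \bar E^{(n)}_{per}(\varphi^+,\varphi^-,\vec n,x)$, where $\bar E^{(n)}_{per}$ is the periodic-profile formula \er{L2009hhffff12kkkhjhjghghgvgvggcjhggghnew}. This is precisely where Theorem \ref{L2009.02kkkjkhkjhnew} enters. Given a competitor $\gamma_\e$ for $\bar E^{(n)}_0$ at a fixed $x$, I would apply Theorem \ref{L2009.02kkkjkhkjhnew} with $n_1$ trivial (no $v$- or $m$-slots, or equivalently taking them constant), $n_2=n$, $\vec Q=Id$ (here the ``either $\vec Q=Id$ or $n_2=1$'' dichotomy is satisfied by choosing $\vec Q=Id$), $p_2=p$, $F(\cdot)$ frozen at the point $x$ in the last argument, and the target step function $\varphi_0(y)=\gamma(y,\varphi^+,\varphi^-,\vec n)$. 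The growth bound needed is the lower bound $|c_1|^{p_2}/C\leq F\leq C(\ldots)$ from \er{hgdfvdhvdhfvnew}, which follows from \er{hgdfvdhvdhfvjjjjiiiuyyyjinew} with $g(x)$ evaluated and bounded on a neighbourhood of $x$ (and the coercivity on the top-order term $|A|^p$ forces $\e^n\nabla^n\varphi_\e\to 0$ in $L^p_{loc}$, giving the boundedness hypothesis of Lemma \ref{nfjghfighfihjtfohjt} so that all intermediate $\e^j\nabla^j$ terms also vanish). The conclusion of Theorem \ref{L2009.02kkkjkhkjhnew} produces a new family $\psi_\e\in\mathcal{S}^{(n)}(\varphi^+,\varphi^-,I_{\vec n})$ — i.e.\ genuinely periodic in the $\vec\nu_2,\ldots,\vec\nu_N$ directions and equal to the boundary values — with energy no larger in the $\liminf$. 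Then by the change of variables $y\mapsto Ly$ (rescaling $I_{\vec n}$ to itself and $\e$ to $L=1/\e$ effectively), each such periodic competitor is admissible in the infimum defining $\bar E^{(n)}_{per}$, so $\bar E^{(n)}_0\geq\bar E^{(n)}_{per}$ pointwise a.e.\ on $D$. Combining with step one and the fact that $\bar E^{(n)}_{per}$ is a Borel function of $x$ (measurability of the integrand over $D$), the claimed inequality \er{a1a2a3a4a5a6a7s8hhjhjjhjjjjjjkkkkgenhjhhhhjnew} follows by integrating over $D$.

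The main obstacle I anticipate is not the structure of the argument — which is a clean composition — but bookkeeping: verifying that the cell formula $\bar E^{(n)}_0$ that drops out of Theorem \ref{dehgfrygfrgygenjklhhj} (phrased in terms of $L^q$-convergence to a step function on the unit cube $I_{\vec\nu}$) matches, after the rescaling in Theorem \ref{L2009.02kkkjkhkjhnew} and the $L$-dilation, the $L>0$ infimum in \er{L2009hhffff12kkkhjhjghghgvgvggcjhggghnew}, and that the class $\mathcal{S}^{(n)}$ is preserved under these operations. One must also be careful that the various theorems are stated with slightly different conventions (vector exponents $q=(q_1,\ldots,q_m)$ versus scalar $p$, the $\{v,m,\psi\}$ triple versus a single function $\psi$), so a small amount of specialization and translation of notation is required; but since $d$, $k$ can be taken trivially and $\Div$-free constraints become vacuous, no new analytic content is needed beyond what is already proved in Theorems \ref{dehgfrygfrgygenjklhhj}, \ref{L2009.02kkkjkhkjhnew} and Lemma \ref{nfjghfighfihjtfohjt}. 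A secondary point to check is that the measurability hypotheses of Theorem \ref{dehgfrygfrgygenjklhhj} on the Borel maps $\varphi^\pm,\vec n$ and the $\mathcal{H}^{N-1}$ $\sigma$-finiteness of $D$ are exactly those assumed here, which they are.
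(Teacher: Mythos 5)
Your proposal is correct and follows essentially the same route as the paper, which derives this Proposition precisely as the composition of Theorem \ref{dehgfrygfrgygenjklhhj} (applied with the $v$- and $h$-slots trivial, yielding the abstract cell energy) with Theorem \ref{L2009.02kkkjkhkjhnew} (upgrading abstract profiles to periodic ones, via the coercivity in \eqref{hgdfvdhvdhfvjjjjiiiuyyyjinew} and Lemma \ref{nfjghfighfihjtfohjt} for the intermediate derivatives). The bookkeeping points you flag are exactly the specializations the paper leaves implicit, and each periodic competitor $\psi_\e$ with $L=\e$ is directly admissible in the infimum \eqref{L2009hhffff12kkkhjhjghghgvgvggcjhggghnew}, so no further analytic content is needed.
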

Thus by plugging Proposition
\ref{dehgfrygfrgygenbgggggggggggggkgkgnew} into Theorem
\ref{ffgvfgfhthjghgjhg}
we deduce the $\Gamma$-limit result for the
first order problem.
\begin{theorem}\label{dehgfrygfrgygenbgggggggggggggkgkgthtjtfnew}
Let $\O\subset\R^N$ be an open set. Furthermore, let $F\in
C^1\big(\R^{m\times N^n}\times\R^{m\times
N^{(n-1)}}\times\ldots\times\R^{m\times N}\times
\R^m\times\R^N,\R\big)$, be such that $F\geq 0$ and there exist
$g(x)\in C^0\big(\R^N,(0,+\infty)\big)$ and $p\geq 1$ satisfying
\begin{multline}\label{hgdfvdhvdhfvjjjjiiiuyyyjitghujtrnew}
\frac{1}{g(x)}|A|^p
\leq F\Big(A,a_1,\ldots,a_{n-1},b,x\Big) \leq
g(x)\bigg(|A|^p+\sum_{j=1}^{n-1}|a_j|^{p}+|b|^p+1\bigg)\quad
\text{for every}\;\;\big(A,a_1,a_2,\ldots,a_{n-1},b,x\big) \,.
\end{multline}
Assume also that for every $x_0\in\O$ and every $\tau>0$ there
exists $\alpha>0$ satisfying
\begin{multline}\label{vcjhfjhgjkgkgjgghjfhfhfmjghjvjhvjhgh}
F\big(a_1,a_2,\ldots, a_n,b,x\big)-F\big(a_1,a_2,\ldots,
a_n,b,x_0\big)\geq -\tau F\big(a_1,a_2,\ldots, a_n,b,x_0\big)\\
\forall\, a_1\in \times\R^{m\times N^n}\; \ldots\;\forall\,
a_n\in\R^{m\times N}\;\;\forall\, b\in\R^{m}\;\;\forall\,
x\in\R^N\;\;\text{such that}\;\;|x-x_0|<\alpha\,.
\end{multline}
Next let $\varphi\in BV(\R^N,\R^{m})\cap L^\infty$ be such that $\|D
\varphi\|(\partial\Omega)=0$ and $F\big(0,0,\ldots,0,
\varphi(x),x\big)=0$ for a.e. $x\in\O$.
Then for every
$\{\varphi_\e\}_{\e>0}\subset W^{n,p}_{loc}(\O,\R^m)$ such that
$\varphi_\e\to \varphi$ in $L^p_{loc}(\O,\R^m)$ as $\e\to 0^+$, we
will have
\begin{multline}\label{a1a2a3a4a5a6a7s8hhjhjjhjjjjjjkkkkgenhjhhhhjtjurtnew}
\liminf_{\e\to 0^+}\frac{1}{\e}\int_\O F\bigg(\,\e^n\nabla^n
\varphi_\e(x),\,\e^{n-1}\nabla^{n-1}\varphi_\e(x),\,\ldots,\,\e\nabla \varphi_\e(x),\, \varphi_\e(x),\,x\bigg)dx\\
\geq \int_{\O\cap J_\varphi}\bar
E^{(n)}_{per}\Big(\varphi^+(x),\varphi^-(x),\vec \nu(x),x\Big)d
\mathcal H^{N-1}(x)\,,
\end{multline}
where
\begin{multline}\label{L2009hhffff12kkkhjhjghghgvgvggcjhggghtgjutnew}
\bar E^{(n)}_{per}\Big(\varphi^+,\varphi^-,\vec \nu,x\Big)\;:=\;\\
\inf\Bigg\{\int_{I_{\vec \nu}}\frac{1}{L} F\bigg(L^n\,\nabla^n
\zeta(y),\,L^{n-1}\,\nabla^{n-1} \zeta(y),\,\ldots,\,L\,\nabla
\zeta(y),\,\zeta(y),\,x\bigg)\,dy:\;\; L\in(0,+\infty)\,,\;\zeta\in
\mathcal{S}^{(n)}(\varphi^+,\varphi^-,I_{\vec\nu})\Bigg\}\,,
\end{multline}
with
\begin{multline}\label{L2009Ddef2hhhjjjj77788hhhkkkkllkjjjjkkkhhhhffggdddkkkgjhikhhhjjddddhdkgkkkhgghjhjhjkjtjytrjnew}
\mathcal{S}^{(n)}\big(\varphi^+,\varphi^-,I_{\vec \nu}\big):=
\bigg\{\zeta\in C^n(\R^N,\R^m):\;\;\zeta(y)=\varphi^-\;\text{ if }\;y\cdot\vec \nu\leq-1/2,\\
\zeta(y)=\varphi^+\;\text{ if }\; y\cdot\vec \nu\geq 1/2\;\text{ and
}\;\zeta\big(y+\vec \nu_j\big)=\zeta(y)\;\;\forall j=2,
\ldots,
N\bigg\}\,,
\end{multline}
Here $I_{\vec \nu}:=\{y\in\R^N:\;|y\cdot \vec\nu_j|<1/2\;\;\;\forall
j=1,
\ldots, N\}$, where
$\{\vec\nu_1,
\ldots,\vec\nu_N\}\subset\R^N$ is an
orthonormal base in $\R^N$ such that $\vec\nu_1:=\vec \nu$.
Moreover, there exists e sequence $\{\psi_\e\}_{\e>0}\subset
C^\infty(\R^N,\R^m)$ such that
$\int_\O\psi_\e(x)dx=\int_\O\varphi(x)dx$, $\psi_\e\to \varphi$ in
$L^p(\O,\R^m)$ as $\e\to 0^+$ and we have
\begin{multline}\label{a1a2a3a4a5a6a7s8hhjhjjhjjjjjjkkkkgenhjhhhhjtjurtgfhfhfjfjfjnew}
\lim_{\e\to 0^+}\frac{1}{\e}\int_\O F\bigg(\,\e^n\nabla^n
\psi_\e(x),\,\e^{n-1}\nabla^{n-1}\psi_\e(x),\,\ldots,\,\e\nabla \psi_\e(x),\, \psi_\e(x),\,x\bigg)dx\\
= \int_{\O\cap J_\varphi}\bar
E^{(n)}_{per}\Big(\varphi^+(x),\varphi^-(x),\vec \nu(x),x\Big)d
\mathcal H^{N-1}(x)\,.
\end{multline}
\end{theorem}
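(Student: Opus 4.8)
The plan is to derive \rth{dehgfrygfrgygenbgggggggggggggkgkgthtjtfnew} by matching the first-order lower bound of \rprop{dehgfrygfrgygenbgggggggggggggkgkgnew} against the $K_{per}$-type upper bound of \rth{ffgvfgfhthjghgjhg}, once the present first-order problem for $\varphi$ is fitted into the hypotheses of both. For the lower bound, first take $D:=\O\cap J_\varphi$. Since $\varphi\in BV(\R^N,\R^m)\cap L^\infty$, the structure theory for $BV$ functions recalled in the Appendix (cf.\ \cite{amb}) shows that $D$ is $\mathcal H^{N-1}$ $\sigma$-finite and that $\mathcal H^{N-1}$-a.e.\ $x\in D$ is an approximate jump point, with Borel selections $\varphi^\pm(x)$ and $\vec\nu(x)$; because $\varphi$ is bounded, the defining $L^1$ approximate-limit property there upgrades at once to the $L^p$ statement \er{L2009surfhh8128odno888jjjjjkkkkkkgenjnhhhnew} for every $p\geq1$. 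The remaining hypotheses of \rprop{dehgfrygfrgygenbgggggggggggggkgkgnew} are literally those assumed here: \er{hgdfvdhvdhfvjjjjiiiuyyyjinew} is \er{hgdfvdhvdhfvjjjjiiiuyyyjitghujtrnew}, \er{vcjhfjhgjkgkgjgghjfhfhfmjghj} is \er{vcjhfjhgjkgkgjgghjfhfhfmjghjvjhvjhgh}, the zero-set condition $F(0,\ldots,0,\varphi(x),x)=0$ a.e.\ is assumed, and the $C^1$ regularity of $F$ is stronger than the $C^0$ needed. Hence \rprop{dehgfrygfrgygenbgggggggggggggkgkgnew} applies verbatim and yields, for every $\{\varphi_\e\}\subset W^{n,p}_{loc}(\O,\R^m)$ with $\varphi_\e\to\varphi$ in $L^p_{loc}$, exactly the inequality \er{a1a2a3a4a5a6a7s8hhjhjjhjjjjjjkkkkgenhjhhhhjtjurtnew}, with the cell energy $\bar E^{(n)}_{per}$ of \er{L2009hhffff12kkkhjhjghghgvgvggcjhggghtgjutnew}.

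For the recovery sequence I would view $\varphi$ inside the general framework of \rth{ffgvfgfhthjghgjhg} as the degenerate triple $v:=(\nabla u,h,\psi)$ with $u\equiv0$, $h\equiv0$, $\psi:=\varphi$, so that $v=(0,0,\varphi)\in\mathcal B(\R^N)\cap BV\cap L^\infty$, $\Div h\equiv0$, $|Dv|(\partial\O)=\|D\varphi\|(\partial\O)=0$, and the associated $F$ (depending only on the $\psi$-block of the jet) is nonnegative of class $C^1$ with $F(0,\ldots,0,v(x),x)=F(0,\ldots,0,\varphi(x),x)=0$ a.e. Then \rth{ffgvfgfhthjghgjhg} produces $\psi_\e\in C^\infty(\R^N,\R^m)$ with $\psi_\e\to\varphi$ in $L^q$ for every $q\geq1$ and $\lim_{\e\to0^+}\frac1\e\int_\O F(\e^n\nabla^n\psi_\e,\ldots,\e\nabla\psi_\e,\psi_\e,x)\,dx=K_{per}(v)=\int_{\O\cap J_\varphi}E_{per}\big((0,0,\varphi^+),(0,0,\varphi^-),\vec\nu,x\big)\,d\mathcal H^{N-1}$ (reading the zeroth-order slot along the recovery sequence, as its construction in \cite{PI} does). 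For such a degenerate jump, any $\zeta\in\mathcal S^{(n)}(\varphi^+,\varphi^-,I_{\vec\nu})$, extended to the triple $(0,0,\zeta)$, is admissible in the periodic class defining $E_{per}$, so $E_{per}\big((0,0,\varphi^\pm),\vec\nu,x\big)\le\bar E^{(n)}_{per}(\varphi^+,\varphi^-,\vec\nu,x)$ and hence $\lim I_\e(\psi_\e)\le\int_{\O\cap J_\varphi}\bar E^{(n)}_{per}$. Since $\psi_\e\to\varphi$ in $L^p_{loc}$, the lower bound of the first step applies to $\{\psi_\e\}$ as well, forcing $\lim_{\e\to0^+}I_\e(\psi_\e)=\int_{\O\cap J_\varphi}\bar E^{(n)}_{per}$, which is \er{a1a2a3a4a5a6a7s8hhjhjjhjjjjjjkkkkgenhjhhhhjtjurtgfhfhfjfjfjnew}. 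The normalization $\int_\O\psi_\e\,dx=\int_\O\varphi\,dx$ is part of the construction in \rth{ffgvfgfhthjghgjhg} (see \cite{PI}); otherwise one replaces $\psi_\e$ by $\psi_\e+c_\e$ with $c_\e:=|\O|^{-1}\int_\O(\varphi-\psi_\e)\,dx\to0$, and the resulting change in the energy is $o(1)$ by the $C^1$-regularity of $F$ together with the growth bound \er{hgdfvdhvdhfvjjjjiiiuyyyjitghujtrnew}, which keeps the density equi-integrable along the sequence.

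The two inputs above are the substantial ones, but both are available: \rprop{dehgfrygfrgygenbgggggggggggggkgkgnew} is obtained by chaining the periodization statement \rth{L2009.02kkkjkhkjhnew} --- itself built from the nested-truncation \rlemma{L2009.02newgen} and the interpolation \rlemma{nfjghfighfihjtfohjt} --- with the abstract blow-up lower bound \rth{dehgfrygfrgygenjklhhj} (respectively \rth{dehgfrygfrgygen}), while \rth{ffgvfgfhthjghgjhg} is the Part~I upper bound. Consequently the genuine steps at this level are only bookkeeping: embedding the first-order functional into the $(\nabla u,h,\psi)$-framework via the choice $u=h=0$, and checking that the general $E_{per}$ collapses to $\bar E^{(n)}_{per}$ on such configurations. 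I expect the point to watch to be precisely this identification $E_{per}=\bar E^{(n)}_{per}$: only the trivial ``$\le$'' (restriction) direction is used for the upper bound, and the reverse inequality is then supplied for free by the already-established lower bound, so the two estimates squeeze $\lim I_\e(\psi_\e)$ to $\int_{\O\cap J_\varphi}\bar E^{(n)}_{per}$ without further work; what remains to be done carefully is the Borel measurable selection of $\varphi^\pm,\vec\nu$ and the harmless mean-value correction.
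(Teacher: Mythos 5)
Your proposal is correct and follows exactly the route the paper takes: the paper states this theorem as the direct consequence of plugging the first-order lower bound of Proposition \ref{dehgfrygfrgygenbgggggggggggggkgkgnew} into the Part~I upper bound Theorem \ref{ffgvfgfhthjghgjhg}, which is precisely your two-step argument. Your added bookkeeping (the $L^1\to L^p$ upgrade of the jump conditions via $L^\infty$-boundedness, the degenerate embedding $v=(0,0,\varphi)$, the one-sided comparison $E_{per}\le\bar E^{(n)}_{per}$ closed by the lower bound, and the mean-value correction) is exactly what the paper leaves implicit.
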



\begin{thebibliography}{66}
\bibitem{ARS} F.~Alouges, T.~Rivière, S.~Serfaty,  {\em N\'{e}el and cross-tie wall energies
for planar micromagnetic configurations},
ESAIM Control Optim. Calc. Var. {\bf 8} (2002), 31--68.

\bibitem{ambrosio} L.~Ambrosio, {\em Metric space valued functions of
    bounded variation},
 Ann. Scuola Norm. Sup. Pisa Cl. Sci. (4)  {\bf 17}  (1990), 439--478.
\bibitem{adm} L.~Ambrosio, C.~De Lellis and C.~ Mantegazza, {\em  Line
energies for gradient vector fields in the plane}, Calc. Var. PDE
{\bf 9 }(1999), 327--355.
\bibitem{amb} L.~Ambrosio, N.~Fusco and D.~Pallara, Functions of
Bounded Variation and Free Discontinuity Problems, Oxford
Mathematical Monographs. Oxford University Press, New York, 2000.
\bibitem{ag1} P.~Aviles and  Y.~Giga, {\em A mathematical problem related to the physical theory of liquid
 crystal configurations}, Proc. Centre Math. Anal. Austral. Nat. Univ. {\bf 12} (1987), 1--16.
\bibitem{ag2} P.~Aviles and  Y.~Giga, {\em On lower semicontinuity of a
defect energy obtained by a singular limit of the
 Ginzburg-Landau type energy for gradient
 fields}, Proc. Roy. Soc. Edinburgh Sect. A  {\bf 129} (1999), 1--17.

\bibitem{CdL} Sergio Conti and Camillo de Lellis, {\em Sharp upper bounds for a variational problem with singular
perturbation}, Math. Ann. {\bf 338} (2007), no. 1, 119--146.

\bibitem{contiS} Sergio Conti and Ben Schweizer {\em A sharp-interface limit for a
two-well problem in geometrically linear elasticity}. Arch. Ration.
Mech. Anal. 179 (2006), no. 3, 413--452.
\bibitem{contiS1} Sergio Conti and Ben Schweizer {\em Rigidity and Gamma convergence for solid-solid phase
transitions with $SO(2)$-invariance}, Comm. Pure Appl. Math. 59
(2006), no. 6, 830--868.
\bibitem{contiFL} S. Conti, I. Fonseca, G. Leoni {\em A $\Gamma$-convergence
result for the two-gradient theory of phase transitions}, Comm. Pure
Appl. Math. {\bf 55} (2002), pp. 857-936.
\bibitem{conti} S.~Conti and C.~De Lellis, {\em Sharp upper bounds for a variational problem
with singular perturbation}, Math. Ann. 338 (2007), no. 1, 119--146.
\bibitem{dl} C.~De Lellis, {\em An example in the gradient theory of phase
transitions} ESAIM Control Optim. Calc. Var. {\bf 7} (2002),
285--289 (electronic).

\bibitem{otto} A.~DeSimone,  S.~M\"uller, R.V.~Kohn and F.~Otto, {\em A compactness result
in the gradient theory of phase transitions}, Proc. Roy. Soc.
Edinburgh Sect. A  {\bf 131} (2001), 833--844.
\bibitem{DKMO} A.~DeSimone,  S.~M\"uller, R.V.~Kohn and F.~Otto,
{\em Recent analytical developments in micromagnetics}, In Giorgio
Bertotti and Isaak Mayergoyz, editors, The Science of Hysteresis,
volume 2, chapter 4, pages 269-381. Elsevier Academic Press, 2005.

\bibitem{FM}  I.~Fonseca and C.~Mantegazza, {\em Second order singular perturbation models
for phase transitions}, SIAM J. Math. Anal. 31 (2000), no. 5,
1121--1143 (electronic).

\bibitem{FonP}  I.~Fonseca, C.~Popovici, {\em Coupled singular perturbations for phase transitions},
Assymptotic Analisis {\bf 44} (2005), 299-325.

\bibitem{evansbook} L.C.~Evans, Partial Differential Equations, Graduate
 Studies in Mathematics, Vol.~{\bf 19}, American Mathematical Society, 1998.
\bibitem{evans}  L.C.~Evans and   R.F.~Gariepy,  Measure Theory and Fine
Properties of Functions, Studies in Advanced Mathematics, CRC Press,
Boca Raton, FL, 1992.
\bibitem{gt} D.~Gilbarg and N.~Trudinger, Elliptic Partial Differential
 Equations of Elliptic Type, 2nd ed., Springer-Verlag,
  Berlin-Heidelberg, 1983.
\bibitem{giusti} E.~Giusti, Minimal Surfaces and Functions of Bounded
  Variation, Monographs in Mathematics, {\bf 80}, Birkh{\"a}user Verlag,
  Basel, 1984.



\bibitem{HaS} A.~Hubert and R.~Sch\"{a}fer, Magnetic domains, Springer, 1998.

\bibitem{jin} W.~Jin and R.V.~Kohn, {\em Singular perturbation and
the energy of folds}, J. Nonlinear Sci. {\bf 10} (2000), 355--390.
\bibitem{modica} L.~Modica,
{\em The gradient theory of  phase transitions and the minimal
 interface criterion}, Arch.  Rational Mech. Anal. {\bf 98} (1987),  123--142.
\bibitem{mm1} L.~Modica and S.~Mortola, {\em Un esempio di $\Gamma
    \sp{-}$-convergenza},
Boll. Un. Mat. Ital. B {\bf 14 } (1977),  285--299.
\bibitem{mm2} L.~Modica and S.~Mortola, {\em Il limite nella $\Gamma
    $-convergenza di una famiglia di funzionali ellittici},
  Boll. Un. Mat. Ital.  A  {\bf 14}  (1977),  526--529.

\bibitem{polgen} A.~Poliakovsky, {\em A general technique to prove upper bounds for
singular perturbation problems}, Journal d'Analyse Mathematique,
{\bf 104} (2008), no. 1, 247-290.

\bibitem{P3} A.~Poliakovsky, {\em Sharp upper bounds for a singular perturbation
problem related to micromagnetics}, Annali della Scuola Normale
Superiore di Pisa, Classe di Scienze. {\bf 6} (2007), no. 4,
673--701.

\bibitem{polmag} A.~Poliakovsky, {\em Upper bounds for a class of energies containing a non-local
term}, , ESAIM: Control, Optimization and Calculus of Variations,
{\bf 16} (2010),  856--886.

\bibitem{polcras} A.~Poliakovsky, {\em A method for establishing upper bounds for  singular
perturbation problems}, C. R. Math. Acad. Sci. Paris 341 (2005), no.
2, 97--102.

\bibitem{pol} A.~Poliakovsky, {\em Upper bounds for  singular perturbation problems involving gradient
fields}, J.~Eur.~Math.~Soc., {\bf 9} (2007), 1--43.
\bibitem{pollift} A.~Poliakovsky, {\em On a singular perturbation problem related to optimal lifting
  in BV-space}, Calculus of Variations and PDE, {\bf
28} (2007), 411--426.

\bibitem{P4} A.~Poliakovsky, {\em On a
variational approach to the Method of Vanishing Viscosity for
Conservation Laws}, Advances in Mathematical Sciences and
Applications, {\bf 18} (2008), no. 2., 429--451.


\bibitem{PI} A.~Poliakovsky, {\em On the $\Gamma$-limit of singular perturbation problems with
optimal profiles which are not one-dimensional. Part I: The upper
bound}, preprint

\bibitem{PII} A.~Poliakovsky, {\em On the $\Gamma$-limit of singular perturbation problems with
optimal profiles which are not one-dimensional. Part II: The lower
bound}, preprint

\bibitem{PIII} A.~Poliakovsky, {\em On the $\Gamma$-limit of singular perturbation problems with
optimal profiles which are not one-dimensional. Part III: The
energies with non local terms}, preprint

\bibitem{RS1} T. Rivi\`{e}re and S. Serfaty, {\em Limiting domain wall energy for a problem
related to micromagnetics}, Comm. Pure Appl. Math., 54 No 3 (2001),
294-338.
\bibitem{RS2} T. Rivi\`{e}re and S. Serfaty, {\em Compactness, kinetic formulation and entropies for a
problem related to mocromagnetics}, Comm. in Partial Differential
Equations 28 (2003), no. 1-2, 249-269.

\bibitem{sternberg} P.~Sternberg,
 {\em The effect of a singular perturbation on nonconvex
  variational problems}, Arch. Rational Mech. Anal. {\bf 101} (1988), 209--260.

\bibitem{vol}  A.I.~Volpert and S.I.~Hudjaev, Analysis in Classes of Discontinuous Functions and
Equations of Mathematical Physics,  Martinus Nijhoff Publishers,
Dordrecht, 1985.
\end{comment}
%
%
%
%


%
%
%
%
%
%
%
%









%
%
%
%
\appendix
\section{Notations and basic results about $BV$-functions}
\label{sec:pre}

\noindent$\bullet$ For given a real topological linear space $X$ we
denote by $X^*$ the dual space (the space of continuous linear
functionals from $X$ to $\R$).

\noindent$\bullet$ For given $h\in X$ and $x^*\in X^*$ we denote by
$\big<h,x^*\big>_{X\times X^*}$ the value in $\R$ of the functional
$x^*$ on the vector $h$.


\noindent$\bullet$ For given two normed linear spaces $X$ and $Y$ we
denote by $\mathcal{L}(X;Y)$ the linear space of continuous
(bounded) linear operators from $X$ to $Y$.


\noindent$\bullet$ For given $A\in\mathcal{L}(X;Y)$  and $h\in X$ we
denote by $A\cdot h$ the value in $Y$ of the operator $A$ on the
vector $h$.


\noindent$\bullet$ For given two reflexive Banach spaces $X,Y$ and
$S\in\mathcal{L}(X;Y)$ we denote by $S^*\in \mathcal{L}(Y^*;X^*)$
the corresponding adjoint operator, which satisfies
\begin{equation*}\big<x,S^*\cdot y^*\big>_{X\times X^*}:=\big<S\cdot
x,y^*\big>_{Y\times Y^*}\quad\quad\text{for every}\; y^*\in
Y^*\;\text{and}\;x\in X\,.
\end{equation*}


\noindent$\bullet$ Given open set $G\subset\R^N$ we denote by
$\mathcal{D}(G,\R^d)$  the real topological linear space of
compactly supported $\R^d$-valued test functions i.e.
$C^\infty_c(G,\R^d)$ with the usual topology.

\noindent$\bullet$ Denote
$\mathcal{D}'(G,\R^d):=\big\{\mathcal{D}(G,\R^d)\big\}^*$ (the space
of $\R^d$-valued distributions in $G$).

\noindent$\bullet$ Given $h\in\mathcal{D}'(G,\R^d)$ and
$\delta\in\mathcal{D}(G,\R^d)$ we denote the value in $\R$ of the
distribution $h$ on the test function $\delta$ by
$<\delta,h>:=\big<\delta,h\big>_{\mathcal{D}(G,\R^d)\times
\mathcal{D}'(G,\R^d)}$.

\noindent$\bullet$ Given a linear operator $\vec
A\in\mathcal{L}(\R^d;\R^k)$ and a distribution
$h\in\mathcal{D}'(G,\R^d)$ we denote by $\vec A\cdot h$ the
distribution in $\mathcal{D}'(G,\R^k)$ defined by
\begin{equation*}
<\delta,\vec A \cdot h>:=<\vec A^*\cdot
\delta,h>\quad\quad\forall\delta\in\mathcal{D}(G,\R^k).
\end{equation*}

\noindent$\bullet$ Given $h\in\mathcal{D}'(G,\R^d)$ and
$\delta\in\mathcal{D}(G,\R)$ by $<\delta,h>$ we denote the vector in
$\R^d$ which satisfy $<\delta,h>\cdot \vec e:=<\delta\vec e,h>$ for
every $\vec e\in\R^d$.




\noindent$\bullet$ For a $p\times q$ matrix $A$ with $ij$-th entry
$a_{ij}$ and for a $q\times d$ matrix $B$ with $ij$-th entry
$b_{ij}$ we denote by $AB:=A\cdot B$ their product, i.e. the
$p\times d$ matrix, with $ij$-th entry
$\sum\limits_{k=1}^{q}a_{ik}b_{kj}$.

\noindent$\bullet$ We identify a $\vec u=(u_1,\ldots,u_q)\in\R^q$
with the $q\times 1$ matrix having $i1$-th entry $u_i$, so that for
a $p\times q$ matrix $A$ with $ij$-th entry $a_{ij}$ and for $\vec
v=(v_1,v_2,\ldots,v_q)\in\R^q$ we denote by $A\,\vec v :=A\cdot\vec
v$ the $p$-dimensional vector $\vec u=(u_1,\ldots,u_p)\in\R^p$,
given by $u_i=\sum\limits_{k=1}^{q}a_{ik}v_k$ for every $1\leq i\leq
p$.

\noindent$\bullet$ As usual $A^T$ denotes the transpose of the
matrix $A$.

\noindent$\bullet$ For $\vec u=(u_1,\ldots,u_p)\in\R^p$ and $\vec
v=(v_1,\ldots,v_p)\in\R^p$ we denote by $\vec u\vec v:=\vec
u\cdot\vec v:=\sum\limits_{k=1}^{p}u_k v_k$ the standard scalar
product. We also note that $\vec u\vec v=\vec u^T\vec v=\vec v^T\vec
u$ as products of matrices.

\noindent$\bullet$ For $\vec u=(u_1,\ldots,u_p)\in\R^p$ and $\vec
v=(v_1,\ldots,v_q)\in\R^q$ we denote by $\vec u\otimes\vec v$ the
$p\times q$ matrix with $ij$-th entry $u_i v_j$.



\noindent$\bullet$ For any $p\times q$ matrix $A$ with $ij$-th entry
$a_{ij}$ and $\vec v=(v_1,v_2,\ldots,v_d)\in\R^d$ we denote by
$A\otimes\vec v$ the $p\times q\times d$ tensor with $ijk$-th entry
$a_{ij}v_k$.

\noindent$\bullet$ Given a vector valued function
$f(x)=\big(f_1(x),\ldots,f_k(x)\big):\O\to\R^k$ ($\O\subset\R^N$) we
denote by $Df$ or by $\nabla_x f$ the $k\times N$ matrix with
$ij$-th entry $\frac{\partial f_i}{\partial x_j}$.

\noindent$\bullet$ Given a matrix valued function
$F(x):=\{F_{ij}(x)\}:\R^N\to\R^{k\times N}$ ($\O\subset\R^N$), we
denote $div\,F:=(l_1,\ldots,l_k)\in\R^k$, where
$l_i=\sum\limits_{j=1}^{N}\frac{\partial F_{ij}}{\partial x_j}$.

\noindent$\bullet$ Given a matrix valued function
$F(x)=\big\{f_{ij}(x)\big\}(1\leq i\leq p,\,1\leq j\leq
q):\O\to\R^{p\times q}$ ($\O\subset\R^N$) we denote by $DF$ or by
$\nabla_x F$ the $p\times q\times N$ tensor with $ijk$-th entry
$\frac{\partial f_{ij}}{\partial x_k}$.


\noindent$\bullet$ Given a vector measure $\mu=(\mu_1,\ldots,\mu_k)$
(where $\forall j=1,\ldots,k$ $\mu_j$ is a finite signed measure) we
denote by $\|\mu\|(E)$ the total variation of $\mu$ on the set $E$.

\noindent$\bullet$ For any $\mu$-measurable function $f$, we define
the product measure $f\cdot\mu$ by: $f\cdot\mu(E)=\int_E f\,d\mu$,
for every $\mu$-measurable set $E$.



 In what follows we present some known results on BV-spaces. We rely mainly on the book \cite{amb}
by Ambrosio, Fusco and Pallara. We begin by introducing some
notation. For every $\vec\nu\in S^{N-1}$ (the unit sphere in $\R^N$)
and $R>0$ we set
\begin{align}
B_{R}^+(x,\vec\nu)&=\{y\in\R^N\,:\,|y-x|<R,\,
(y-x)\cdot\vec\nu>0\}\,,\label{eq:B+}\\
B_{R}^-(x,\vec\nu)&=\{y\in\R^N:|y-x|<R,\,
(y-x)\cdot\vec\nu<0\}\,.\label{eq:B-}
\end{align}
\begin{definition}
Let $\Omega$ be a domain in $\R^N$ and let $f\in L^1(\Omega,\R^m)$.
We say that $f\in BV(\Omega,\R^m)$ if the following quantity is
finite:
\begin{equation*}
\int_\Omega|Df|:= \sup\bigg\{\int_\Omega f\cdot\Div\f \,dx :\,\f\in
C^1_c(\Omega,\R^{m\times N}),\;|\f(x)|\leq 1\;\forall x \bigg\}.
\end{equation*}
\end{definition}
\begin{definition}\label{defjac889878}
Let $\Omega$ be a domain in $\R^N$. Consider a function
$f\in L^1_{loc}(\Omega,\R^m)$ and a point $x\in\Omega$.\\
i) We say that $x$ is an {\em approximate continuity point} of $f$
if there exists $z\in\R^m$ such that
$$\lim\limits_{\rho\to
0^+}\frac{\int_{B_\rho(x)}|f(y)-z|\,dy} {\rho^N}=0.$$
In this case
we denote $z$ by $\tilde{f}(x)$. The set of  approximate continuity
points of
$f$ is denoted by $G_f$.\\
ii) We say that $x$ is an {\em approximate jump point} of $f$ if
there exist $a,b\in\R^m$ and $\vec\nu\in S^{N-1}$ such that $a\neq
b$ and
\begin{equation*}
\lim\limits_{\rho\to
0^+}\frac{\int_{B_\rho^+(x,\vec\nu)}|f(y)-a|\,dy}
{\mathcal{L}^N\big(B_\rho(x)\big)}=0,\quad \lim\limits_{\rho\to
0^+}\frac{\int_{B_\rho^-(x,\vec\nu)}|f(y)-b|\,dy}
{\mathcal{L}^N\big(B_\rho(x)\big)}=0.
\end{equation*}
The triple $(a,b,\vec\nu)$, uniquely determined, up to a permutation
of $(a,b)$ and a change of sign of $\vec\nu$, is denoted by
$(f^+(x),f^-(x),\vec\nu_f(x))$. We shall call $\vec\nu_f(x)$ the
{\em approximate jump vector} and we shall sometimes write simply
$\vec\nu(x)$ if the reference to the function $f$ is clear. The set
of approximate jump points is denoted by $J_f$. A choice of
$\vec\nu(x)$ for every $x\in J_f$ determines an orientation of
$J_f$. At an approximate continuity point $x$, we shall use the
convention $f^+(x)=f^-(x)=\tilde f(x)$.
\end{definition}
\begin{theorem}[Theorems 3.69 and 3.78 from \cite{amb}]\label{petTh}
Consider an open set $\Omega\subset\R^N$ and $f\in BV(\Omega,\R^m)$.
Then:\\
\noindent i) $\mathcal{H}^{N-1}$-a.e. point in
$\Omega\setminus J_f$ is a point of approximate continuity of $f$.\\
\noindent ii) The set $J_f$  is
$\sigma$-$\mathcal{H}^{N-1}$-rectifiable Borel set, oriented by
$\vec\nu(x)$. I.e. $J_f$ is $\sigma$-finite with respect to
$\mathcal{H}^{N-1}$, there exist countably many $C^1$ hypersurfaces
$\{S_k\}^{\infty}_{k=1}$ such that
$\mathcal{H}^{N-1}\Big(J_f\setminus\bigcup\limits_{k=1}^{\infty}S_k\Big)=0$,
and for $\mathcal{H}^{N-1}$-a.e. $x\in J_f\cap S_k$, the approximate
jump vector $\vec\nu(x)$ is  normal to $S_k$ at the point $x$.
\\ \noindent iii)
$\big[(f^+-f^-)\otimes\vec\nu_f\big](x)\in
L^1(J_f,d\mathcal{H}^{N-1})$.
\end{theorem}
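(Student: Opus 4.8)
The plan is to deduce this statement — which is the classical Federer--Vol'pert structure theorem for $BV$ functions — from De Giorgi's structure theorem for sets of finite perimeter together with the coarea formula, following the treatment in Ambrosio, Fusco and Pallara \cite{amb}. Write $Df$ for the distributional gradient of $f$, a finite $\R^{m\times N}$-valued Radon measure on $\Omega$ with total variation $|Df|$, and set $S_f:=\Omega\setminus G_f$ for the approximate discontinuity set, so that $J_f\subseteq S_f$. First I would reduce to the scalar case $m=1$: for $f=(f_1,\dots,f_m)$ each $f_i\in BV(\Omega)$, one has $S_f=\bigcup_i S_{f_i}$, and on the overlaps of the jump sets $\vec\nu_{f_i}(x)=\pm\vec\nu_f(x)$ for $\mathcal H^{N-1}$-a.e.\ $x$, so the rectifiability, $\sigma$-finiteness and normality claims for $J_f$ follow from the corresponding scalar claims for the $J_{f_i}$, while (iii) is handled separately.

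For scalar $f$ the main tool is the coarea formula $|Df|(\Omega)=\int_{-\infty}^{+\infty}P(\{f>t\};\Omega)\,dt<+\infty$, which shows that $E_t:=\{f>t\}$ has locally finite perimeter for a.e.\ $t\in\R$. For each such $t$, De Giorgi's structure theorem gives that the reduced boundary $\partial^*E_t$ is countably $(N-1)$-rectifiable, that $|D\chi_{E_t}|$ is the measure $\mathcal H^{N-1}$ restricted to $\partial^*E_t$, and that $\partial^*E_t$ carries a measure-theoretic inner normal $\vec\nu_{E_t}$. Fixing a countable dense set $D\subset\R$, I would verify that, modulo an $\mathcal H^{N-1}$-null set, $S_f\subseteq\bigcup_{t\in D}\partial^e E_t$ (with $\partial^e$ the essential boundary), and then invoke Federer's theorem $\mathcal H^{N-1}(\partial^e E_t\setminus\partial^*E_t)=0$ to upgrade $\partial^e$ to $\partial^*$. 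This yields at once that $S_f$, hence $J_f$, is countably $(N-1)$-rectifiable and $\sigma$-finite with respect to $\mathcal H^{N-1}$; extracting the $C^1$ hypersurfaces $\{S_k\}$ from the rectifiability structure gives $\mathcal H^{N-1}\big(J_f\setminus\bigcup_k S_k\big)=0$, and the identity $\vec\nu_f(x)=\pm\vec\nu_{E_t}(x)$ at points $x\in J_f\cap\partial^*E_t$, together with the fact that the measure-theoretic normal of a rectifiable set agrees $\mathcal H^{N-1}$-a.e.\ with the classical normal of the approximating surfaces, shows $\vec\nu_f$ is $\mathcal H^{N-1}$-a.e.\ normal to $S_k$ on $J_f\cap S_k$. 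This establishes (ii).

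Assertion (i) amounts to $\mathcal H^{N-1}(S_f\setminus J_f)=0$ and is the technical core. The plan is a blow-up/density argument: if $x\in S_f\setminus J_f$, then no blow-up $f_{x,\rho}(y):=f(x+\rho y)$ can converge in $L^1_{loc}$ to a two-valued profile $a\chi_{\{y\cdot\vec\nu>0\}}+b\chi_{\{y\cdot\vec\nu<0\}}$; examining the one-dimensional sections of $f$ — using the restriction theorem for $BV$ functions, which makes $t\mapsto f(x_0+t\vec e)$ a scalar $BV$ function of one variable for a.e.\ line — one shows that on $S_f\setminus J_f$ the approximate upper $(N-1)$-density $\Theta^{*\,N-1}(|Df|,\cdot)$ must equal $+\infty$. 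Then I would apply the standard comparison lemma — if a finite Radon measure $\mu$ satisfies $\Theta^{*\,k}(\mu,\cdot)\geq c$ on a Borel set $E$, then $\mathcal H^k(E)\leq C\,\mu(E)/c$ — with $\mu=|Df|$, $k=N-1$ and $c\to+\infty$, which forces $\mathcal H^{N-1}(S_f\setminus J_f)=0$. Finally (iii) is immediate from the mutually singular decomposition $Df=D^af+D^cf+D^jf$ with $D^jf=(f^+-f^-)\otimes\vec\nu_f$ times $\mathcal H^{N-1}$ restricted to $J_f$: one gets $\int_{J_f}\big|(f^+-f^-)\otimes\vec\nu_f\big|\,d\mathcal H^{N-1}=|D^jf|(\Omega)\leq|Df|(\Omega)<+\infty$.

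The hard part will be the blow-up/slicing step inside (i): De Giorgi's and Federer's structure theorems do the geometric bookkeeping almost for free, but the statement that the singular, non-jump part of the discontinuity set is $\mathcal H^{N-1}$-negligible — the genuine Federer--Vol'pert content — requires the delicate interplay between the superlevel-set slicing of $f$, its one-dimensional sections, and density estimates for $|Df|$, and is where essentially all the effort is concentrated.
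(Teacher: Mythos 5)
This statement is not proved in the paper at all: it appears in the appendix as recalled background, explicitly attributed to Theorems 3.69 and 3.78 of Ambrosio--Fusco--Pallara \cite{amb} (cf.\ Remark \ref{vyuguigiugbuikkk}), so there is no argument of the author's to compare yours against. Your outline is a faithful reconstruction of the standard Federer--Vol'pert proof from that reference: reduction to the scalar case, the coarea formula plus De Giorgi's and Federer's structure theorems for the level sets $\{f>t\}$ to get rectifiability and $\sigma$-finiteness of $J_f$ together with the normality of $\vec\nu_f$, a blow-up/slicing and density-comparison argument for $\mathcal{H}^{N-1}(S_f\setminus J_f)=0$, and the finiteness of $|Df|$ together with the identity $Df\llcorner J_f=(f^+-f^-)\otimes\vec\nu_f\,\mathcal{H}^{N-1}\llcorner J_f$ for (iii); you also correctly locate the genuine difficulty in part (i). One caution: the assertion that $\Theta^{*\,N-1}(|Df|,\cdot)=+\infty$ everywhere on $S_f\setminus J_f$ is not quite the lemma used in \cite{amb}; the workable form is the contrapositive --- at $\mathcal{H}^{N-1}$-a.e.\ point where that upper density is finite (and $f^{\vee},f^{\wedge}$ are finite), the blow-ups are compact in $BV_{loc}$ and converge to a function of $y\cdot\vec\nu$ alone, forcing the point to lie in $G_f\cup J_f$ --- so you should phrase the density step that way when writing it out.
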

%
%
%
%













%
%
%
%
\begin{comment}

\begin{thebibliography}{66}
\bibitem{ARS} F.~Alouges, T.~Rivière, S.~Serfaty,  {\em N\'{e}el and cross-tie wall energies
for planar micromagnetic configurations},
ESAIM Control Optim. Calc. Var. {\bf 8} (2002), 31--68.

\bibitem{ambrosio} L.~Ambrosio, {\em Metric space valued functions of
    bounded variation},
 Ann. Scuola Norm. Sup. Pisa Cl. Sci. (4)  {\bf 17}  (1990), 439--478.
\bibitem{adm} L.~Ambrosio, C.~De Lellis and C.~ Mantegazza, {\em  Line
energies for gradient vector fields in the plane}, Calc. Var. PDE
{\bf 9 }(1999), 327--355.
\bibitem{amb} L.~Ambrosio, N.~Fusco and D.~Pallara, Functions of
Bounded Variation and Free Discontinuity Problems, Oxford
Mathematical Monographs. Oxford University Press, New York, 2000.
\bibitem{ag1} P.~Aviles and  Y.~Giga, {\em A mathematical problem related to the physical theory of liquid
 crystal configurations}, Proc. Centre Math. Anal. Austral. Nat. Univ. {\bf 12} (1987), 1--16.
\bibitem{ag2} P.~Aviles and  Y.~Giga, {\em On lower semicontinuity of a
defect energy obtained by a singular limit of the
 Ginzburg-Landau type energy for gradient
 fields}, Proc. Roy. Soc. Edinburgh Sect. A  {\bf 129} (1999), 1--17.

\bibitem{CdL} Sergio Conti and Camillo de Lellis, {\em Sharp upper bounds for a variational problem with singular
perturbation}, Math. Ann. {\bf 338} (2007), no. 1, 119--146.

\bibitem{contiS} Sergio Conti and Ben Schweizer {\em A sharp-interface limit for a
two-well problem in geometrically linear elasticity}. Arch. Ration.
Mech. Anal. 179 (2006), no. 3, 413--452.
\bibitem{contiS1} Sergio Conti and Ben Schweizer {\em Rigidity and Gamma convergence for solid-solid phase
transitions with $SO(2)$-invariance}, Comm. Pure Appl. Math. 59
(2006), no. 6, 830--868.
\bibitem{contiFL} S. Conti, I. Fonseca, G. Leoni {\em A $\Gamma$-convergence
result for the two-gradient theory of phase transitions}, Comm. Pure
Appl. Math. {\bf 55} (2002), pp. 857-936.
\bibitem{conti} S.~Conti and C.~De Lellis, {\em Sharp upper bounds for a variational problem
with singular perturbation}, Math. Ann. 338 (2007), no. 1, 119--146.
\bibitem{dl} C.~De Lellis, {\em An example in the gradient theory of phase
transitions} ESAIM Control Optim. Calc. Var. {\bf 7} (2002),
285--289 (electronic).

\bibitem{otto} A.~DeSimone,  S.~M\"uller, R.V.~Kohn and F.~Otto, {\em A compactness result
in the gradient theory of phase transitions}, Proc. Roy. Soc.
Edinburgh Sect. A  {\bf 131} (2001), 833--844.
\bibitem{DKMO} A.~DeSimone,  S.~M\"uller, R.V.~Kohn and F.~Otto,
{\em Recent analytical developments in micromagnetics}, In Giorgio
Bertotti and Isaak Mayergoyz, editors, The Science of Hysteresis,
volume 2, chapter 4, pages 269-381. Elsevier Academic Press, 2005.

\bibitem{FM}  I.~Fonseca and C.~Mantegazza, {\em Second order singular perturbation models
for phase transitions}, SIAM J. Math. Anal. 31 (2000), no. 5,
1121--1143 (electronic).

\bibitem{FonP}  I.~Fonseca, C.~Popovici, {\em Coupled singular perturbations for phase transitions},
Assymptotic Analisis {\bf 44} (2005), 299-325.

\bibitem{evansbook} L.C.~Evans, Partial Differential Equations, Graduate
 Studies in Mathematics, Vol.~{\bf 19}, American Mathematical Society, 1998.
\bibitem{evans}  L.C.~Evans and   R.F.~Gariepy,  Measure Theory and Fine
Properties of Functions, Studies in Advanced Mathematics, CRC Press,
Boca Raton, FL, 1992.
\bibitem{gt} D.~Gilbarg and N.~Trudinger, Elliptic Partial Differential
 Equations of Elliptic Type, 2nd ed., Springer-Verlag,
  Berlin-Heidelberg, 1983.
\bibitem{giusti} E.~Giusti, Minimal Surfaces and Functions of Bounded
  Variation, Monographs in Mathematics, {\bf 80}, Birkh{\"a}user Verlag,
  Basel, 1984.



\bibitem{HaS} A.~Hubert and R.~Sch\"{a}fer, Magnetic domains, Springer, 1998.

\bibitem{jin} W.~Jin and R.V.~Kohn, {\em Singular perturbation and
the energy of folds}, J. Nonlinear Sci. {\bf 10} (2000), 355--390.


\bibitem{CDFO} C.~De Lellis, F.~Otto {\em Structure of entropy solutions to the eikonal equation}
J. Eur. Math. Soc. {\bf 5}, (2003), 107--145.


\bibitem{modica} L.~Modica,
{\em The gradient theory of  phase transitions and the minimal
 interface criterion}, Arch.  Rational Mech. Anal. {\bf 98} (1987),  123--142.
\bibitem{mm1} L.~Modica and S.~Mortola, {\em Un esempio di $\Gamma
    \sp{-}$-convergenza},
Boll. Un. Mat. Ital. B {\bf 14 } (1977),  285--299.
\bibitem{mm2} L.~Modica and S.~Mortola, {\em Il limite nella $\Gamma
    $-convergenza di una famiglia di funzionali ellittici},
  Boll. Un. Mat. Ital.  A  {\bf 14}  (1977),  526--529.

\bibitem{polgen} A.~Poliakovsky, {\em A general technique to prove upper bounds for
singular perturbation problems}, Journal d'Analyse Mathematique,
{\bf 104} (2008), no. 1, 247-290.

\bibitem{P3} A.~Poliakovsky, {\em Sharp upper bounds for a singular perturbation
problem related to micromagnetics}, Annali della Scuola Normale
Superiore di Pisa, Classe di Scienze. {\bf 6} (2007), no. 4,
673--701.

\bibitem{polmag} A.~Poliakovsky, {\em Upper bounds for a class of energies containing a non-local
term}, , ESAIM: Control, Optimization and Calculus of Variations,
{\bf 16} (2010),  856--886.

\bibitem{polcras} A.~Poliakovsky, {\em A method for establishing upper bounds for  singular
perturbation problems}, C. R. Math. Acad. Sci. Paris 341 (2005), no.
2, 97--102.

\bibitem{pol} A.~Poliakovsky, {\em Upper bounds for  singular perturbation problems involving gradient
fields}, J.~Eur.~Math.~Soc., {\bf 9} (2007), 1--43.
\bibitem{pollift} A.~Poliakovsky, {\em On a singular perturbation problem related to optimal lifting
  in BV-space}, Calculus of Variations and PDE, {\bf
28} (2007), 411--426.

\bibitem{P4} A.~Poliakovsky, {\em On a
variational approach to the Method of Vanishing Viscosity for
Conservation Laws}, Advances in Mathematical Sciences and
Applications, {\bf 18} (2008), no. 2., 429--451.


\bibitem{PI} A.~Poliakovsky, {\em On the $\Gamma$-limit of singular perturbation problems with
optimal profiles which are not one-dimensional. Part I: The upper
bound}, preprint, http://arxiv.org/abs/1112.2305


\bibitem{PIII} A.~Poliakovsky, {\em On the $\Gamma$-limit of singular perturbation problems with
optimal profiles which are not one-dimensional. Part III: The
energies with non local terms}, preprint,
http://arxiv.org/abs/1112.2971

\bibitem{RS1} T. Rivi\`{e}re and S. Serfaty, {\em Limiting domain wall energy for a problem
related to micromagnetics}, Comm. Pure Appl. Math., 54 No 3 (2001),
294-338.
\bibitem{RS2} T. Rivi\`{e}re and S. Serfaty, {\em Compactness, kinetic formulation and entropies for a
problem related to mocromagnetics}, Comm. in Partial Differential
Equations 28 (2003), no. 1-2, 249-269.

\bibitem{sternberg} P.~Sternberg,
 {\em The effect of a singular perturbation on nonconvex
  variational problems}, Arch. Rational Mech. Anal. {\bf 101} (1988), 209--260.

\bibitem{vol}  A.I.~Volpert and S.I.~Hudjaev, Analysis in Classes of Discontinuous Functions and
Equations of Mathematical Physics,  Martinus Nijhoff Publishers,
Dordrecht, 1985.

\end{thebibliography}

\begin{thebibliography}{66}
\bibitem{ambrosio} L.~Ambrosio, {\em Metric space valued functions of
    bounded variation},
 Ann. Scuola Norm. Sup. Pisa Cl. Sci. (4)  {\bf 17}  (1990), 439--478.
\bibitem{adm} L.~Ambrosio, C.~De Lellis and C.~ Mantegazza, {\em  Line
energies for gradient vector fields in the plane}, Calc. Var. PDE
{\bf 9 }(1999), 327--355.
\bibitem{amb} L.~Ambrosio, N.~Fusco and D.~Pallara, Functions of
Bounded Variation and Free Discontinuity Problems, Oxford
Mathematical Monographs. Oxford University Press, New York, 2000.
\bibitem{ag1} P.~Aviles and  Y.~Giga, {\em A mathematical problem related to the physical theory of liquid
 crystal configurations}, Proc. Centre Math. Anal. Austral. Nat. Univ. {\bf 12} (1987), 1--16.
\bibitem{ag2} P.~Aviles and  Y.~Giga, {\em On lower semicontinuity of a
defect energy obtained by a singular limit of the
 Ginzburg-Landau type energy for gradient
 fields}, Proc. Roy. Soc. Edinburgh Sect. A  {\bf 129} (1999), 1--17.

\bibitem{CdL} Sergio Conti, Camillo de Lellis, {\em Sharp upper bounds for a variational problem with singular
perturbation}, Math. Ann. {\bf 338} (2007), no. 1, 119--146.

\bibitem{contiS} Sergio Conti, Ben Schweizer {\em A sharp-interface limit for a
two-well problem in geometrically linear elasticity}. Arch. Ration.
Mech. Anal. 179 (2006), no. 3, 413--452.
\bibitem{contiS1} Sergio Conti, Ben Schweizer {\em Rigidity and Gamma convergence for solid-solid phase
transitions with $SO(2)$-invariance}, Comm. Pure Appl. Math. 59
(2006), no. 6, 830--868.
\bibitem{contiFL} S. Conti, I. Fonseca, G. Leoni {\em A $\Gamma$-convergence
result for the two-gradient theory of phase transitions}, Comm. Pure
Appl. Math. {\bf 55} (2002), pp. 857-936.

\bibitem{otto} A.~DeSimone,  S.~M\"uller, R.V.~Kohn and F.~Otto, {\em A compactness result
in the gradient theory of phase transitions}, Proc. Roy. Soc.
Edinburgh Sect. A  {\bf 131} (2001), 833--844.

\bibitem{FM}  I.~Fonseca and C.~Mantegazza, {\em Second order singular perturbation models
for phase transitions}, SIAM J. Math. Anal. 31 (2000), no. 5,
1121--1143 (electronic).

\bibitem{FonP}  I.~Fonseca, C.~Popovici, {\em Coupled singular perturbations for phase transitions},
Assymptotic Analisis {\bf 44} (2005), 299-325.

\bibitem{gt} D.~Gilbarg and N.~Trudinger, Elliptic Partial Differential
 Equations of Elliptic Type, 2nd ed., Springer-Verlag,
  Berlin-Heidelberg, 1983.

\bibitem{CDFO} C.~De Lellis, F.~Otto {\em Structure of entropy solutions to the eikonal equation}
J. Eur. Math. Soc. {\bf 5}, (2003), 107--145.


\bibitem{modica} L.~Modica,
{\em The gradient theory of  phase transitions and the minimal
 interface criterion}, Arch.  Rational Mech. Anal. {\bf 98} (1987),  123--142.
\bibitem{mm1} L.~Modica and S.~Mortola, {\em Un esempio di $\Gamma
    \sp{-}$-convergenza},
Boll. Un. Mat. Ital. B {\bf 14 } (1977),  285--299.
\bibitem{mm2} L.~Modica and S.~Mortola, {\em Il limite nella $\Gamma
    $-convergenza di una famiglia di funzionali ellittici},
  Boll. Un. Mat. Ital.  A  {\bf 14}  (1977),  526--529.

\bibitem{polgen} A.~Poliakovsky, {\em A general technique to prove upper bounds for
singular perturbation problems}, Journal d'Analyse Mathematique,
{\bf 104} (2008), no. 1, 247-290.

\bibitem{P3} A.~Poliakovsky, {\em Sharp upper bounds for a singular perturbation
problem related to micromagnetics}, Annali della Scuola Normale
Superiore di Pisa, Classe di Scienze. {\bf 6} (2007), no. 4,
673--701.

\bibitem{polmag} A.~Poliakovsky, {\em Upper bounds for a class of energies containing a non-local
term}, , ESAIM: Control, Optimization and Calculus of Variations,
{\bf 16} (2010),  856--886.

\bibitem{polcras} A.~Poliakovsky, {\em A method for establishing upper bounds for  singular
perturbation problems}, C. R. Math. Acad. Sci. Paris 341 (2005), no.
2, 97--102.

\bibitem{pol} A.~Poliakovsky, {\em Upper bounds for  singular perturbation problems involving gradient
fields}, J.~Eur.~Math.~Soc., {\bf 9} (2007), 1--43.
\bibitem{pollift} A.~Poliakovsky, {\em On a singular perturbation problem related to optimal lifting
  in BV-space}, Calculus of Variations and PDE, {\bf
28} (2007), 411--426.

\bibitem{P4} A.~Poliakovsky, {\em On a
variational approach to the Method of Vanishing Viscosity for
Conservation Laws}, Advances in Mathematical Sciences and
Applications, {\bf 18} (2008), no. 2., 429--451.

\bibitem{PI} A.~Poliakovsky, {\em On the $\Gamma$-limit of singular perturbation problems with
optimal profiles which are not one-dimensional. Part I: The upper
bound}, preprint, http://arxiv.org/abs/1112.2305


\bibitem{PIII} A.~Poliakovsky, {\em On the $\Gamma$-limit of singular perturbation problems with
optimal profiles which are not one-dimensional. Part III: The
energies with non local terms}, preprint,
http://arxiv.org/abs/1112.2971

\bibitem{RS1} T. Rivi\`{e}re and S. Serfaty, {\em Limiting domain wall energy for a problem
related to micromagnetics}, Comm. Pure Appl. Math., 54 No 3 (2001),
294-338.
\bibitem{RS2} T. Rivi\`{e}re and S. Serfaty, {\em Compactness, kinetic formulation and entropies for a
problem related to mocromagnetics}, Comm. in Partial Differential
Equations 28 (2003), no. 1-2, 249-269.

\bibitem{sternberg} P.~Sternberg,
 {\em The effect of a singular perturbation on nonconvex
  variational problems}, Arch. Rational Mech. Anal. {\bf 101} (1988), 209--260.
\end{thebibliography}
\end{document}